\title{On the Number of Maximum Inner Distance Latin Squares}
\author{by Omar A. Garcia}
\date{December 6th, 2021}
\newtheorem{thm}{Theorem}[section]
\newtheorem{lemma}[thm]{Lemma}
\newtheorem{corol}[thm]{Corollary}
\newtheorem*{defn}{Definition}
\newtheorem*{them}{Theorem}
\newtheorem*{conj}{Conjecture}
\newcommand{\func}[3]{\ensuremath{#1 : #2 \to #3}}
\newcommand{\Z}{\mathbb{Z}}
\renewcommand{\phi}{\varphi}
\renewcommand{\emptyset}{\varnothing}
\newcommand{\floor}[1]{\left\lfloor #1 \right\rfloor }
\newcommand{\LSnk}{\mathrm{LS}(n, k)}
\newcommand{\rowprod}{\mathrm{prod}}
\newcommand{\dist}{\mathrm{dist}}
\newcommand{\MiDn}{\mathrm{MiD}(n)}
\begin{document}
\maketitle{}

\begin{abstract}
    The inner distance of a Latin square was defined by myself and six others during an REU in the Summer of 2020 at Moravian College. Since then, I have been curious about its possible connections to other combinatorial mathematics. The inner distance of a matrix is the minimum value of the distance between entries in adjacent cells, where our distance metric is distance modulo $n$. Intuitively, one expects that most Latin squares have inner distance 1, for example there probably exists a pair of adjacent cells with consecutive integers. And very few should have \textit{maximum} inner distance; the maximum inner distance was found by construction for all $n\geq 3$ to be exactly $\floor{\frac{n-1}{2}}$. In this paper we also establish existence for all smaller inner distances. Much of our introductory work is showcased in \cite{inner distance}, with a primary focus on determining the maximum inner distance for Latin squares and Sudoku Latin squares. In this paper, we focus on enumerating maximum inner distance squares, and calculated the exact number for any $n$:
    \begin{them}
    The number of Latin squares of maximum inner distance, denoted $|MiDLS(n)|$, equals:
    \[|MiDLS(n)| = 
    \begin{cases}
    4n & \text{if $n$ is odd}\\
    n(P(n)^2 + 2n) & \text{if $n$ is even}
    \end{cases}
    \]
    Where $P(n)=\frac{1}{4}n^2 + 2$ if $n \equiv 0 \pmod{4}$, and $P(n)=\frac{1}{4}n^2 + 1$ if $n\equiv 2 \pmod{4}$. 
    \end{them}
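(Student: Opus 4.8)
The plan is to reduce the theorem to a local question about which differences may occur between adjacent cells, and then to prove a rigidity statement pinning every extremal square down to a short, explicitly parametrized list. Set $b=\floor{n/2}$ and let $S_n\subseteq\ZZ{n}$ be the set of residues $d$ with $\dist(d,0)\ge\floor{\frac{n-1}{2}}$; a Latin square has inner distance $\floor{\frac{n-1}{2}}$ exactly when every pair of adjacent entries differs by an element of $S_n$. For odd $n$ one has $S_n=\{(n-1)/2,\,(n+1)/2\}$, a two-element set whose members are units modulo $n$; for even $n$ one has $S_n=\{b-1,\,b,\,b+1\}$, a three-element set in which $b$ is never a unit. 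This unit/non-unit split is the structural reason the answer bifurcates. Moreover $P(n)$ should turn out to be exactly the number of extremal rows having a prescribed first entry (one verifies by hand that this count is $6$ for $n=4$ and $10$ for $n=6$, matching $P(4)$ and $P(6)$), so $P(n)$ is the natural building block for even $n$, playing the role that the constant $2$ plays when $n$ is odd.

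For odd $n$ I would start from a \emph{no-backtracking lemma}: a sequence of distinct residues modulo $n$ whose consecutive differences all lie in $\{c,-c\}$ for a fixed unit $c$ must be an arithmetic progression, since a reversal $x,\,x\pm c,\,x$ repeats a value. With $c=(n-1)/2$ this forces every row of an extremal square to be an arithmetic progression of common difference $\pm c$. The column condition then forces all rows to share the \emph{same} common difference: the entrywise difference of rows $i$ and $i+1$ at column $j$ is affine in $j$ with slope equal to the difference of the two row-steps, and any nonzero such slope is again a unit, so those differences would run through all of $\ZZ{n}$ and in particular hit $0\notin S_n$. With the common row-step $\delta$ fixed, the first column is a permutation with consecutive differences in $S_n$, hence an arithmetic progression of step $\eta\in\{c,-c\}$, so $x_{ij}=a+(i-1)\eta+(j-1)\delta$ with $a\in\ZZ{n}$ and $\eta,\delta\in\{c,-c\}$. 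Reading off $x_{11},x_{12},x_{21}$ shows this parametrization is injective, each such array is a genuine Latin square of inner distance $(n-1)/2$, and the total is $n\cdot 2\cdot 2=4n$; the case $n=3$ is subsumed, since its $12$ Latin squares are exactly these affine squares.

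The even case carries the real difficulty. Since $b$ is not a unit, pure $b$-step progressions are forbidden, yet the three allowed steps do permit genuinely non-affine rows, so the no-backtracking lemma must be refined. The tool of choice is reduction modulo $2$. When $n\equiv 2\pmod 4$ write $n=2m$ with $m$ odd and use $\ZZ{n}\cong\ZZ{m}\times\ZZ{2}$, under which the permitted steps become $(\pm 1,0)$ and $(0,1)$, so an extremal row is a Hamiltonian path in the prism graph $C_m\times K_2$; when $n\equiv 0\pmod 4$ the step $b$ preserves parity while the steps $b\pm 1$ flip it, and a similar but slightly different explicit graph governs the rows. I would classify these paths and show there are exactly $P(n)$ of them through a fixed endpoint, the discrepancy between $n^2/4+2$ and $n^2/4+1$ arising from the two residues of $n$ modulo $4$. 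Gluing rows subject to the analogous column condition should then show the extremal squares split into a ``product'' family, in which the first row and first column are chosen compatibly among the $P(n)$ shapes and determine everything else, contributing $n\,P(n)^2$, plus a residual family contributing $2n^2$.

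The main obstacle is precisely this last, global step. One must prove that the row classification is exhaustive, that distinct admissible choices propagate to distinct full squares, that the column condition admits no solutions outside the two families, and that these families are summed without overlap (with small $n$ checked directly). Converting the purely local parity information into that global rigidity --- and in particular ruling out ``exotic'' non-affine squares for even $n$ --- is the crux that makes the constant $2n$ in $n\bigl(P(n)^2+2n\bigr)$ the whole story.
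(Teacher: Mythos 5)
Your odd-$n$ argument is essentially complete and matches the known classification (every such square is affine, $m_{i,j}=s+ri+cj$ with $r,c\in\{\pm\frac{n-1}{2}\}$, giving $4n$). Your reduction of the even row-classification to Hamiltonian paths in an explicit graph is also the right frame, and for $n\equiv 2\pmod 4$ the CRT identification $\ZZ{n}\cong\ZZ{m}\times\ZZ{2}$ turning the steps $\frac{n}{2},\frac{n}{2}\pm 1$ into prism-graph edges is a genuinely different and attractive device; the paper instead works directly with ``extended difference rows'' (recording $h_j-\frac{n}{2}\in\{0,\pm1\}$) and classifies admissible $\{0,\pm1\}$-patterns by partial-sum constraints, which has the advantage of treating both parities of $n/2$ uniformly and of feeding directly into the gluing analysis.

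The genuine gap is the global step you yourself flag as ``the crux'': you never supply an argument that every extremal square is either a row product or a (back-)circulant. This is not a routine verification --- it is where almost all of the paper's work lives. The paper proves it by showing that for any two \emph{distinct} neighboring rows the vertical differences are forced (``determined'' pairs, via the compatibility relation $h_{i,j}+v_{i,j+1}\equiv v_{i,j}+h_{i+1,j}$), and then by a case analysis on row types showing that a single repeated adjacent pair propagates to the whole square, while distinct neighbors force either a contradiction or the circulant rotation pattern. Nothing in your proposal substitutes for that propagation argument, so ``exotic'' squares are not actually ruled out. A second, smaller error: you assert the two families are ``summed without overlap,'' but for $n\equiv 0\pmod 4$ they do overlap --- there are $n+2$ normal Hamiltonian cycles (not $n$), hence $2(n+2)$ circulant-type squares per addition class, of which exactly $4$ (the row products of the constant difference rows $\pm(1,\dots,1)$) are also row products; only after this inclusion--exclusion do both parities give the same correction $+2n$, so without tracking the overlap you cannot derive the constant in $n\left(P(n)^2+2n\right)$.
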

    \noindent The odd case is easy, shown in \cite{inner distance}. The result for evens is computed in this paper by a connection to graph theory, and direct computation.
\end{abstract}

\section{Vocabulary and Such}
To catch the reader up to speed, all relevant definitions and results are given in the introduction. My goal is to write such that any undergraduate can read the paper and understand every bit of mathematics. 

A \textbf{Latin square} is an $n\times n$ matrix whose entries come from a symbol set $S$ of size $n\in \Z$, where every row and every column contains each symbol in $S$. Equivalently, rows and columns do not contain repeated symbols. More generally a \textbf{Latin rectangle} is a $k\times n$ (or $n\times k$) matrix for $k\leq n$, whose entries come from a symbol set of size $n$, where the rows and columns do not contain repeats. Inner distance will be defined for Latin rectangles. $1\times n$ Latin rectangles are called Latin \textbf{rows}, $n\times 1$ Latin rectangles are called Latin \textbf{columns}, and $n\times n$ Latin rectangles are Latin squares. The symbols $(i,j)$ will denote the $i$-th row and $j$-th column in a matrix. Cells are \textbf{adjacent} if they share an edge vertically or horizontally, for example cells $(1,3)$ and $(2,3)$ share an edge vertically, but cells $(1,2)$ and $(2,3)$ do not share any edge. See the following examples:
\begin{figure} [H]
    \centering
    \begin{tikzpicture}[scale=.6]
    \draw(0,0)grid(6,3); 
    \draw[step=6,ultra thick](0,0)rectangle(6,3);
    \foreach\x[count=\i] in{2, 6, 4, 1, 3, 5}{\node at(\i-0.5,2.5){$\x$};};
    \foreach\x[count=\i] in{5, 2, 6, 4, 1, 3}{\node at(\i-0.5,1.5){$\x$};};
    \foreach\x[count=\i] in{3, 5, 2, 6, 4, 1}{\node at(\i-0.5,0.5){$\x$};};
    \end{tikzpicture}
    \qquad
    \begin{tikzpicture}[scale=.6]
    \draw(0,0)grid(7,1); 
    \draw[step=7,ultra thick](0,0)rectangle(7,1);
    \foreach\x[count=\i] in{4, 2, 5, 6, 1, 3, 7}{\node at(\i-0.5,0.5){$\x$};};
    \end{tikzpicture}
    \caption{A $3\times 6$ Latin rectangle of inner distance 2 (left) and a $1\times 7$ Latin row of inner distance 1 (right).}
    \label{LR examples}
\end{figure}
\begin{figure} [H]
    \centering
    \begin{tikzpicture}[scale=.6]
    \draw(0,0)grid(6,6); 
    \draw[step=6,ultra thick](0,0)grid(6,6);
    \foreach\x[count=\i] in{1, 3, 5, 2, 6, 4}{\node at(\i-0.5,5.5){$\x$};};
    \foreach\x[count=\i] in{4, 1, 3, 5, 2, 6}{\node at(\i-0.5,4.5){$\x$};};
    \foreach\x[count=\i] in{6, 4, 1, 3, 5, 2}{\node at(\i-0.5,3.5){$\x$};};
    \foreach\x[count=\i] in{2, 6, 4, 1, 3, 5}{\node at(\i-0.5,2.5){$\x$};};
    \foreach\x[count=\i] in{5, 2, 6, 4, 1, 3}{\node at(\i-0.5,1.5){$\x$};};
    \foreach\x[count=\i] in{3, 5, 2, 6, 4, 1}{\node at(\i-0.5,0.5){$\x$};};
    \end{tikzpicture}
    \qquad
    \begin{tikzpicture}[scale=.6]
    \draw(0,0)grid(6,6); 
    \draw[step=6,ultra thick](0,0)grid(6,6);
    \foreach\x[count=\i] in{2, 1, 4, 3, 6, 5}{\node at(\i-0.5,5.5){$\x$};};
    \foreach\x[count=\i] in{3, 4, 6, 1, 5, 2}{\node at(\i-0.5,4.5){$\x$};};
    \foreach\x[count=\i] in{1, 6, 3, 5, 2, 4}{\node at(\i-0.5,3.5){$\x$};};
    \foreach\x[count=\i] in{5, 3, 1, 2, 4, 6}{\node at(\i-0.5,2.5){$\x$};};
    \foreach\x[count=\i] in{6, 5, 2, 4, 3, 1}{\node at(\i-0.5,1.5){$\x$};};
    \foreach\x[count=\i] in{4, 2, 5, 6, 1, 3}{\node at(\i-0.5,0.5){$\x$};};
    \end{tikzpicture}
    \caption{Two Latin squares of order 6, of inner distance 2 (left) and 1 (right).}
    \label{LS examples}
\end{figure}

For the rest of the paper, the symbol set of any rectangle will be the integers $1, 2, \dots, n$, denoted $[1, n]$. The symbol `$n$' is an arbitrary positive integer $n\geq5$, since the inner distance of Latin squares of order 4 and below is always 1. In section \ref{even section} we further assume $n\geq 6$ is even. 

To avoid confusion about numbers modulo $n$, we adopt the following conventions: the symbol `$\equiv$' should be interpreted as obtaining the remainder modulo $n$, while the symbol `$=$' means integer equivalence. For example $m_{1,2} - m_{1,1}\equiv h_{1,1}$ means to compute the integer $m_{1,2} - m_{1,1}$, and find its remainder $h_{1,1}$ mod $n$, with $h_{1,1}\in [0,n-1]$. In contrast, $\epsilon_{1,1} = h_{1,1} - \frac{n}{2}$ means $\epsilon_{1,1}$ is the integer $h_{1,1} - \frac{n}{2}$. We use the symbol set $[1,n]$ rather than $[0, n-1]$ for symbols in a Latin square. 

The \textbf{distance} between two symbols $a,b\in [1,n]$ is denoted $\dist(a,b)$, and is the minimum of $a-b \pmod{n}$ and $b-a \pmod{n}$, where $\pmod{n}$ also means to compute the remainder in $[0, n-1]$. For example, if $n=6$, then $\dist(1,6)=1$, since $6+1 \equiv 1$. But if $n\geq 10$, then $\dist(1,6) = 5$, since $1 + 5 \equiv 6$. Intuitively, the distance is the shortest distance from $a$ to $b$.

The \textbf{inner distance} of a Latin rectangle $R$ is the minimum of the distances between all pairs of adjacent cells in $R$. For example in figure \ref{LS examples} the left square has inner distance 2 because all adjacent symbols have a difference of 2,3, or 4, corresponding to distances of 2,3, and 2. Cells $(2,1)$ and $(2,2)$ in the right square contain symbols 3 and 4 respectively, which have a distance of 1. The two rectangles in Figure \ref{LR examples} are sub-rectangles of the squares in Figure \ref{LS examples}, hence have inner distance 2 and 1. 

Enumerating squares of any inner distance is a difficult problem in general: surprisingly the total number of Latin squares of order $n$ is only known up to $n=11$. See \cite{number of LS} for the exact number. Another focus of this paper is formally introducing tools for studying inner distance. Here are a few open questions I would like to see answered: for any fixed order $n$, does the number of squares of inner distance $k$ always decrease as $k$ increases? What structures (algebraic, combinatorial, etc.) correspond to a Latin square of inner distance $k$? What effect does inner distance have on structures induced by Latin squares?

Here is a general roadmap of this paper: section \ref{basic tools} provides all the necessary (new) tools and definitions for understanding the results in the paper. In particular we define difference rows and difference matrices, which are an equivalent way to think about Latin rows (and thus columns) or Latin squares. In section \ref{even section} we specialize to the case where $n$ is even and the inner distance is maximized, and compute how many possible difference rows there are by classifying them all. This gives the number of Latin rows of maximum inner distance. Then by studying this classification, we slowly prove that every Latin square of maximum inner distance is either a circulant or back circulant, or something we call a row product. 

\subsection{Previous Results}
Here we recall some results from our first paper \cite{inner distance} that are relevant to this thesis. 

\begin{thm}\label{MiD of squares}
The maximum inner distance of a Latin square of order $n$ is the largest inner distance that a Latin square of order $n$ can achieve. For any given $n$, this distance is $\floor{\frac{n-1}{2}}$, or $\frac{n-1}{2}$ for odd $n$ and $\frac{n}{2} - 1 = \frac{n-2}{2}$ for even $n$. 
\end{thm}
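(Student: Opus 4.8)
The plan is to split the claim into an upper bound ("no Latin square of order $n$ can have inner distance exceeding $\floor{(n-1)/2}$") and a matching lower bound ("some Latin square of order $n$ achieves it"). For the upper bound, I would argue by a counting/pigeonhole principle applied to a single row. Fix any row of the square, say $(r_1, r_2, \dots, r_n)$, a permutation of $[1,n]$. Consider the $n-1$ consecutive differences $d_j \equiv r_{j+1} - r_j \pmod n$, together with the wrap-around difference $d_n \equiv r_1 - r_n$; these $n$ values are nonzero residues and sum to $0 \pmod n$. If every horizontally adjacent pair had distance $\geq k$, then each genuine difference $d_j$ ($1 \le j \le n-1$) lies in $\{k, k+1, \dots, n-k\}$. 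The key observation is that one cannot have all $n-1$ of these differences "large" in the same direction without the partial sums $r_1, r_1+d_1, r_1+d_1+d_2, \dots$ revisiting a residue before step $n$, contradicting that $r_1,\dots,r_n$ are distinct mod $n$; balancing forces roughly half the steps to be $\geq k$ and half $\leq -k$ (i.e. $\geq k$ in the other direction), and summing absolute values against the constraint that the walk visits $n$ distinct points on $\ZZ{n}$ yields $k \le \floor{(n-1)/2}$. I expect this balancing argument — making precise why a walk on $\ZZ n$ that hits $n$ distinct residues cannot take $n-1$ steps each of size $\geq k$ unless $k$ is small — to be the main obstacle, and the cleanest route is probably to observe that the $n$ values $r_1 < r_2 < \cdots$ relisted in increasing order have consecutive gaps summing to $n$, so some gap is $\le n/k$-ish; alternatively one sorts the symbols and uses that $n$ points on a cycle of length $n$ have two at distance $\le 1$... one must instead track the *adjacency* structure, so the argument should go through the row's difference sequence directly rather than the sorted order.

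For the lower bound I would give an explicit construction realizing inner distance exactly $\floor{(n-1)/2}$. The natural candidate is a circulant Latin square whose first row is an arithmetic progression with common difference $g \equiv \floor{(n-1)/2}$ or, when that $g$ is not coprime to $n$, the nearby value: define $M_{i,j} \equiv a + (i-1)\alpha + (j-1)\beta \pmod n$ (shifted into $[1,n]$) for suitable $\alpha, \beta$ coprime to $n$ chosen so that $\dist(\alpha,0)$ and $\dist(\beta,0)$ — which are exactly the horizontal and vertical adjacent-cell distances — both equal $\floor{(n-1)/2}$. For odd $n$ one can take $\alpha = \beta = (n-1)/2$, which is coprime to $n$ and has distance $(n-1)/2$; for even $n$ one takes $\alpha = \beta = (n-2)/2$ when that is odd (hence coprime to the even $n$), and otherwise a small modification, always landing on distance $(n-2)/2 = \floor{(n-1)/2}$. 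One then checks that every row and column is a permutation (immediate from $\gcd(\alpha,n)=\gcd(\beta,n)=1$) and that every adjacent pair realizes the target distance, so the inner distance is at least — hence exactly, by the upper bound — $\floor{(n-1)/2}$.

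Finally I would note that both halves of the argument appear already in \cite{inner distance} in essentially this form, so here the statement is recalled rather than reproved; the role of Theorem~\ref{MiD of squares} in the present paper is only to pin down the target value $\floor{(n-1)/2}$ so that "maximum inner distance" is unambiguous in the enumeration that follows, and in Section~\ref{even section} the even case $\floor{(n-1)/2} = (n-2)/2$ is the one that matters. The two potential pitfalls to watch are (i) the coprimality case analysis in the even construction, which is genuinely case-dependent on $n \bmod 4$ and dovetails with the $P(n)$ appearing in the main theorem, and (ii) making the pigeonhole step in the upper bound fully rigorous rather than merely plausible; the rest is routine verification.
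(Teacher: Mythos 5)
First, note that the paper does not prove this statement at all: Theorem~\ref{MiD of squares} is imported verbatim from \cite{inner distance} as a ``previous result,'' so there is no in-paper proof to compare against; your closing remark that the statement is recalled rather than reproved is exactly right. Judged on its own terms, though, your proposal has two genuine gaps. The more serious one is in the lower bound for $n\equiv 2\pmod 4$. Your construction $M_{i,j}\equiv a+(i-1)\alpha+(j-1)\beta$ requires $\gcd(\alpha,n)=\gcd(\beta,n)=1$ with $\dist(\alpha,0)\geq \frac{n}{2}-1$. For $n=4k+2$ the only candidates are $\alpha\in\{2k,\,2k+1,\,2k+2\}$, and \emph{none} of these is coprime to $4k+2$ (the even ones share the factor $2$, and $\gcd(2k+1,4k+2)=2k+1$). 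So the ``small modification'' you wave at does not exist inside the linear family: for $n\equiv 2\pmod 4$ there is no circulant with a constant difference row achieving inner distance $\frac{n}{2}-1$, which is exactly what Theorem~\ref{classification of cycles} later records (the all-ones extended difference row exists only for $n=4k$). The construction for this case must use a non-constant difference row, e.g.\ the order-$6$ square in Figure~\ref{LS examples} whose first-row differences are $2,2,3,4,4$, or a row product built from a Row-C-type cycle.

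The second gap is the upper bound, which you flag as ``the main obstacle'' but never close; the balancing/sorting discussion is a red herring. For odd $n$ there is nothing to prove, since $\frac{n-1}{2}$ is already the largest possible distance between any two symbols. For even $n$ one only needs to rule out inner distance $\frac{n}{2}$, and that is a two-line observation: distance exactly $\frac{n}{2}$ (the maximum) forces adjacent entries to differ by exactly $\frac{n}{2}$ modulo $n$, so any three consecutive entries of a row satisfy $r_{j+2}\equiv r_j+n\equiv r_j$, repeating a symbol in that row. No pigeonhole over all $n-1$ differences, and no claim that ``roughly half the steps go each way,'' is needed or in fact correct as a proof mechanism. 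With those two repairs the statement is routine, but as written neither half of your argument goes through for all $n$.
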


For odd $n$ this value coincides with the maximum possible distance between any two symbols, making these squares very easy to classify.

\begin{thm}\label{4n odd squares}
For odd $n$, the maximum inner distance of a Latin square is $\frac{n-1}{2}$, and there are exactly $4n$ squares of this inner distance. All squares are of the form $m_{i,j} = s + ri + cj$, where the choice of $r, c\in \{\pm \frac{n-1}{2}\}$ and $s\in[1,n]$ determines a square of maximum inner distance.
\end{thm}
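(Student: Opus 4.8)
Set $d := \frac{n-1}{2}$. By Theorem~\ref{MiD of squares} the maximum inner distance is $d$, so it suffices to classify the squares achieving it and count them. Since $n$ is odd, $d$ is also the \emph{largest} possible value of $\dist(a,b)$ for distinct symbols, and the only symbols at distance $d$ from a given $a$ are $a+d$ and $a+d+1 = a - d \pmod n$. Hence a square has inner distance $d$ if and only if every pair of adjacent cells differs by $d$ or $-d$ modulo $n$. Note also that $\gcd(d,n)=1$ (because $n$ is odd, so $2$ is a unit mod $n$ and $\gcd(n-1,n)=1$), so $d$ generates $\ZZ{n}$; and $2d \equiv -1 \pmod n$ is likewise a unit. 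These arithmetic facts are exactly where oddness of $n$ is used.

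The first key step is a one-dimensional lemma: \emph{any Latin row $x_1,\dots,x_n$ all of whose consecutive differences $x_{j+1}-x_j$ lie in $\{d,-d\}$ modulo $n$ is an arithmetic progression with common difference $d$ or $-d$}; the same holds for columns. To prove it, write $x_{j+1}-x_j \equiv \epsilon_j d$ with $\epsilon_j \in \{\pm 1\}$, so $x_j \equiv x_1 + d\,S_j$ where $S_j = \sum_{i<j}\epsilon_i$ is a $\pm1$-walk on $\Z$ starting at $S_1 = 0$. Since $d$ is a unit, the $x_j$ are distinct mod $n$ iff the $S_j$ are; but the $S_j$ are $n$ integers, so they must be a complete residue system mod $n$, forcing them to be $n$ consecutive integers. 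A $\pm1$-walk of length $n-1$ visiting $n$ consecutive integers can never reverse direction (a reversal $\epsilon_i = -\epsilon_{i+1}$ gives $S_i = S_{i+2}$), so all $\epsilon_i$ are equal.

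Next I globalize. Applying the lemma to each row gives $m_{i,j} \equiv b_i + c_i\, d\, j \pmod n$ with $c_i \in \{\pm1\}$. The lemma applied to columns says each column difference $m_{i+1,j}-m_{i,j} \equiv (b_{i+1}-b_i) + (c_{i+1}-c_i)\,d\,j$ is constant in $j$; since $2d \equiv -1$ is a unit, this forces $c_{i+1}=c_i$ for every $i$, so all the $c_i$ equal a single $c \in \{\pm 1\}$. Then $m_{i,1} \equiv b_i + cd$ is a Latin column with consecutive differences $b_{i+1}-b_i \in \{\pm d\}$, so by the lemma $b_i$ is an arithmetic progression with difference $r_0 \in \{\pm d\}$. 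Absorbing constants, $m_{i,j} \equiv s + r i + c' j \pmod n$ with $r = r_0 \in \{\pm d\}$ and $c' = cd \in \{\pm d\}$; since $-d \equiv d+1 \equiv \frac{n+1}{2}$, we have $\{\pm d\} = \{\pm\frac{n-1}{2}\}$ modulo $n$, matching the statement.

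Finally I verify the converse and the count. For any $s\in[1,n]$ and $r,c \in \{\pm\frac{n-1}{2}\}$, the matrix $m_{i,j} = s + ri + cj$ has rows and columns that are arithmetic progressions with unit common difference ($r$, resp.\ $c$, is a unit mod $n$), hence is a Latin square, and every adjacent difference is $\pm r$ or $\pm c$, all of distance $d$, so its inner distance is $d$. Distinctness of the $4n$ parametrizations follows by reading off $r$ from differences along a column, $c$ from differences along a row, and then $s$ from a single entry. Combining with the classification above gives exactly $4n$ squares of maximum inner distance. The only mildly delicate point is the globalization step — ensuring the per-row sign choices $c_i$ and the per-column structure are forced to agree — but the unit-ness of $2d$ makes this immediate, so I expect no serious obstacle.
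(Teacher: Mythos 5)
Your proof is correct, but there is nothing in the paper to compare it against line by line: Theorem \ref{4n odd squares} is recalled as a \emph{previous result} and its proof is deferred to the cited earlier work. The closest the paper comes to an argument is the discussion around Figure \ref{MiD of order 5} and the row-product machinery of Section \ref{basic tools}: for odd $n$ there are exactly two normal rows of maximum inner distance, namely those with constant difference row $(\frac{n-1}{2},\dots,\frac{n-1}{2})$ or its negative, and the $4n$ squares are the $n$ additions of the four row products built from these. Your argument is an elementary, self-contained version of exactly that outline: your one-dimensional lemma (a $\pm1$-walk visiting $n$ distinct residues cannot reverse direction) is the classification of maximum-inner-distance difference rows for odd $n$, and your globalization step shows the square is a row product in the paper's language, all without invoking the difference-matrix formalism. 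One small imprecision worth fixing: you justify $c_{i+1}=c_i$ by saying the lemma applied to columns makes $m_{i+1,j}-m_{i,j}$ ``constant in $j$,'' but the column lemma only gives constancy in $i$ for each fixed $j$. What you actually need (and already have) is that this vertical difference lies in the two-element set $\{d,-d\}$ for every $j$, whereas if $c_{i+1}\neq c_i$ it would be $(b_{i+1}-b_i)+(c_{i+1}-c_i)dj \equiv (b_{i+1}-b_i)\mp j$, an affine function of $j$ with unit slope taking $n$ distinct values --- a contradiction for $n\geq 3$. With that wording repaired, the classification, the converse, and the distinctness of the $4n$ parametrizations are all in order.
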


\section{Permutations of Latin Squares and Graph Theory}\label{basic tools}
\subsection{A basic application of symbol permutation.}
From now on, $L$ and $L'$ will denote arbitrary $n\times n$ Latin squares, and the terms $m_{i,j}$ and $m_{i,j}'$ denote the symbol in cells $(i,j)$ of $L$ and $L'$, respectively. 

\begin{defn}
The set of Latin squares of order $n$ and inner distance $k$ is denoted $\mathrm{LS}(n, k)$ for some $k\leq \floor{\frac{n-1}{2}}$. 
\end{defn}
\begin{defn}
Let $L$ be a Latin square of order $n$, and let $\func{\sigma}{S}{S}$ be a permutation (any bijection on $S$). A \textbf{permutation of $L$ by $\sigma$}, is a Latin square $\sigma(L) = M$ where the entry in cell $(i,j)$ of $M$ is $\sigma(m_{i,j})$, and $M$ is said to be a permutation of $L$ if $\sigma$ is unspecified. 
\end{defn}
Permutations are always reversible, and always produce a new Latin squares if the permutation is not the identity map $\sigma(x) = x$. A \textbf{cyclic} permutation $\func{\sigma}{S}{S}$ will be denoted by $\sigma = (x_1, x_2, \dots, x_k)$, which sends $x_i\to x_{i+1}$ for $i\in [1, k-1]$, $x_k \to x_1$, and fixes all other $x\in S$.

\begin{defn}
An \textbf{addition} of a Latin square $L$ is denoted by $L+i$ for some $i\in [1, n]$, as the permutation $\sigma(L)$ where $\sigma(x) = x + i \pmod{n}$. 
\end{defn}
\begin{figure} [H]
    \centering
    \begin{tikzpicture}[scale=.6]
    \draw(0,0)grid(6,6); 
    \draw[step=6,ultra thick](0,0)grid(6,6);
    \foreach\x[count=\i] in{1, 3, 5, 2, 6, 4}{\node at(\i-0.5,5.5){$\x$};};
    \foreach\x[count=\i] in{4, 1, 3, 5, 2, 6}{\node at(\i-0.5,4.5){$\x$};};
    \foreach\x[count=\i] in{6, 4, 1, 3, 5, 2}{\node at(\i-0.5,3.5){$\x$};};
    \foreach\x[count=\i] in{2, 6, 4, 1, 3, 5}{\node at(\i-0.5,2.5){$\x$};};
    \foreach\x[count=\i] in{5, 2, 6, 4, 1, 3}{\node at(\i-0.5,1.5){$\x$};};
    \foreach\x[count=\i] in{3, 5, 2, 6, 4, 1}{\node at(\i-0.5,0.5){$\x$};};
    \end{tikzpicture}
    \qquad
    \begin{tikzpicture}[scale=.6]
    \draw(0,0)grid(6,6); 
    \draw[step=6,ultra thick](0,0)grid(6,6);
    \foreach\x[count=\i] in{3, 5, 2, 6, 4, 1}{\node at(\i-0.5,5.5){$\x$};};
    \foreach\x[count=\i] in{6, 4, 1, 3, 5, 2}{\node at(\i-0.5,4.5){$\x$};};
    \foreach\x[count=\i] in{2, 6, 4, 1, 3, 5}{\node at(\i-0.5,3.5){$\x$};};
    \foreach\x[count=\i] in{4, 1, 3, 5, 2, 6}{\node at(\i-0.5,2.5){$\x$};};
    \foreach\x[count=\i] in{1, 3, 5, 2, 6, 4}{\node at(\i-0.5,1.5){$\x$};};
    \foreach\x[count=\i] in{5, 2, 6, 4, 1, 3}{\node at(\i-0.5,0.5){$\x$};};
    \end{tikzpicture}
    \caption{A Latin square $L$ (left) and its addition $L + 2$ (right). Both have inner distance 2.}
    \label{Addition example}
\end{figure}
\begin{thm}\label{equivalence by adding}
Addition is an equivalence relation on $\LSnk$, with classes of size $n$.
\end{thm}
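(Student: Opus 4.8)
The plan is to verify the three defining properties of an equivalence relation for the operation $L \sim L'$ iff $L' = L + i$ for some $i \in [1,n]$, and then count the size of each class. First I would check that addition preserves membership in $\LSnk$: since $\sigma(x) = x + i \pmod n$ is a bijection on $S$, the permuted array $L+i$ is still a Latin square, and because adding a constant modulo $n$ leaves every difference $m_{i,j} - m_{i',j'} \pmod n$ unchanged, the distance between any pair of adjacent cells is preserved, so the inner distance is exactly $k$. Hence $L + i \in \LSnk$ whenever $L \in \LSnk$, so the relation is well-defined on $\LSnk$.

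Next I would check reflexivity, symmetry, and transitivity. Reflexivity is immediate since $L + n = L$ (adding $n \equiv 0$ is the identity permutation), and $n \in [1,n]$. For symmetry, if $L' = L + i$ then $L = L' + (n - i)$ where $n - i \in [1,n]$, using that $(x + i) + (n-i) \equiv x \pmod n$; I should note the case $i = n$ separately since there $n - i = 0 \notin [1,n]$, but then $L' = L$ and we may take the additive term to be $n$ again. For transitivity, if $L' = L + i$ and $L'' = L' + j$, then $L'' = L + i'$ where $i' \equiv i + j \pmod n$, chosen so that $i' \in [1,n]$; this follows because composing $x \mapsto x+i$ with $x \mapsto x+j$ gives $x \mapsto x + (i+j)$ modulo $n$. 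So $\sim$ is an equivalence relation on $\LSnk$.

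Finally I would compute the class size. Fix $L \in \LSnk$ and consider the map $i \mapsto L + i$ from $[1,n]$ to the equivalence class of $L$; by definition this map is surjective onto the class. To see it is injective, suppose $L + i = L + j$; comparing the entry in cell $(1,1)$ gives $m_{1,1} + i \equiv m_{1,1} + j \pmod n$, hence $i \equiv j \pmod n$, and since $i, j \in [1,n]$ this forces $i = j$. Therefore the class has exactly $n$ elements.

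The argument is almost entirely routine; the only place requiring a little care — and the main ``obstacle'' if any — is the bookkeeping around the representative $i = n$ versus $i = 0$, i.e.\ making sure the index set $[1,n]$ is closed under the induced group operation $i \mapsto (i + j \bmod n)$ with $0$ replaced by $n$. Once one observes that $([1,n], +)$ with this convention is just a relabelling of $\ZZ{n}$, reflexivity, symmetry, and transitivity are all instances of the group axioms, and the class size $n$ is the statement that $\ZZ{n}$ acts freely on $\LSnk$ by addition, freeness being exactly the cell-$(1,1)$ comparison above.
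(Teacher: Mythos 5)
Your proposal is correct and follows essentially the same route as the paper: verify that addition preserves inner distance (hence is well-defined on $\LSnk$), check reflexivity, symmetry, and transitivity directly, and count $n$ elements per class. Your injectivity argument via the cell $(1,1)$ comparison is a slightly more explicit version of the paper's observation that $L + i \neq L$ for $i \neq n$, but the substance is identical.
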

\begin{proof}
Addition preserves inner distance as it preserves distances between adjacent cells, and so is a well defined operation on $\LSnk$. Let $L\sim M$ if $M$ is an addition of $L$. Then the relation $\sim$ satisfies:
\begin{enumerate}
    \item Reflexive: $L = L+n$ by definition, so $L\sim L$.
    \item Symmetry: If $M = L+i$, then $L = M + (n-i)$, for $i\neq n$, so $M\sim L \iff L \sim M$
    \item Transitive: If $M = L+i$, and $N = M+j$, then $N = L + r$ where $r\equiv i+j$. Therefore $L\sim M$ and $M\sim N$ imply $L\sim N$.
\end{enumerate}
Since $L + i\neq L$ if $i\neq n$, addition provides equivalence classes of $\LSnk$, each having order $n$. Therefore $n$ divides $|\LSnk|$.
\end{proof}
\begin{corol}\label{choice of symbol}
Given any $x\in [1,n]$, and any cell $(i,j)$, there exists a square of inner distance $k$ if and only if there exists a square of inner distance $k$ with $m_{i,j} = x$.
\end{corol}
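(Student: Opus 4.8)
The plan is to read this off Theorem \ref{equivalence by adding} with essentially no extra work. The ``only if'' direction is the trivial one: a Latin square of inner distance $k$ in which $m_{i,j} = x$ is, a fortiori, a Latin square of inner distance $k$, so the right-hand existence statement implies the left-hand one.

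For the ``if'' direction, I would suppose $L \in \LSnk$ is any square of inner distance $k$ and let $y := m_{i,j}$ be whatever symbol happens to occupy cell $(i,j)$. The key observation is that addition acts on that single cell by translation: the square $L + t$ has symbol $y + t \pmod{n}$ in cell $(i,j)$. Since $t \mapsto y + t \pmod{n}$ is a bijection of $[1,n]$ onto itself, there is a (unique) choice of $t \in [1,n]$ with $y + t \equiv x$. For that $t$, Theorem \ref{equivalence by adding} guarantees $L + t \in \LSnk$, since addition preserves inner distance, and by construction the $(i,j)$ entry of $L + t$ is $x$. This exhibits the required square, completing the equivalence.

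I do not expect a genuine obstacle here; the only point requiring care is that addition is a well-defined operation on $\LSnk$ (already established in Theorem \ref{equivalence by adding}) and that its effect on any one fixed cell is exactly a cyclic shift cycling through all $n$ symbols, which is precisely the content of the equivalence classes having size $n$. It is worth remarking that this corollary is what licenses the normalization used repeatedly in the sequel: when classifying squares of maximum inner distance one may assume without loss of generality that a distinguished cell (for instance $(1,1)$) contains a distinguished symbol, at the cost of only a factor of $n$ in the final enumeration.
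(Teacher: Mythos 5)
Your proof is correct and matches the paper's intent exactly: the paper leaves this corollary as an immediate consequence of Theorem \ref{equivalence by adding}, and your argument (translate by the unique $t$ with $m_{i,j}+t\equiv x$, using that addition preserves inner distance) is precisely the omitted justification. No issues.
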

In particular we may assume $m_{1,1} =1$ within proofs.

\begin{thm}
Suppose $X$ is a set of Latin squares such that symbol permutation is a closed operation on $X$. If $m$ is the maximum inner distance of $X$, there exists squares of inner distance $k$ for all $k\in [1,m]$. 
\end{thm}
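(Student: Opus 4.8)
The plan is to reduce the whole statement to a discrete intermediate value argument along a path of Latin squares in $X$ linked by transpositions of consecutive symbols. First I would record two trivial facts. The inner distance of any Latin square is at least $1$, because adjacent cells lie in a common row or column and therefore hold distinct symbols; and for any $L$ there is a symbol permutation of inner distance exactly $1$ — pick the symbols $a\neq b$ in two adjacent cells of $L$ and take any permutation $\sigma$ with $\{\sigma(a),\sigma(b)\}=\{1,2\}$, so that $\dist(\sigma(a),\sigma(b))=1$ forces the inner distance of $\sigma(L)$ to be $1$. Since symbol permutation is closed on $X$ and $L\in X$, the square $L^{\ast}:=\sigma(L)$ of inner distance $1$ also lies in $X$.

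The key lemma is that applying a transposition $\tau_i=(i,\,i+1)$ of consecutive symbols to any Latin square changes its inner distance by at most $1$. I would prove this by inspecting a single adjacent cell pair holding symbols $\{a,b\}$: if $\{a,b\}\cap\{i,i+1\}=\emptyset$ the distance is unchanged; if $\{a,b\}=\{i,i+1\}$ then $\tau_i$ merely swaps the two cell values and the (symmetric) distance is again unchanged; and otherwise exactly one of $a,b$ is moved, and it moves to a neighbouring residue modulo $n$, so the distance changes by at most $1$, since shifting one endpoint one step changes $\dist$ by at most one. Taking the minimum over all adjacent pairs, the inner distance changes by at most $1$.

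To finish: the transpositions $\tau_1,\dots,\tau_{n-1}$ generate the symmetric group on $[1,n]$, so write $\sigma$ as a product of them and peel the factors off one at a time to get a sequence $L=M_0,M_1,\dots,M_N=L^{\ast}$ in which each $M_t$ is obtained from $M_{t-1}$ by a single consecutive-symbol transposition. By closure of $X$ under symbol permutation every $M_t$ lies in $X$; by the key lemma the inner distances $d_t$ satisfy $|d_{t+1}-d_t|\le 1$; and $d_0=m$ while $d_N=1$. A discrete intermediate value argument then gives, for every integer $k$ with $1\le k\le m$, some index $t$ with $d_t=k$, exhibiting a square of $X$ of inner distance $k$.

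I expect the only genuine content to be the key lemma, and within it the observation that moving one symbol to an adjacent residue perturbs every relevant distance by at most one; the rest — composition order when peeling off transpositions, the standard fact that adjacent transpositions generate $S_n$, and the discrete intermediate value step — is routine bookkeeping.
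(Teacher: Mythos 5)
Your proof is correct, and it shares its only substantive ingredient with the paper's argument --- the observation that a transposition of two consecutive symbols changes the inner distance by at most $1$ --- but deploys it in a genuinely different way. The paper argues constructively and descends one step at a time: starting from a square of inner distance $m$, it uses closure under addition to arrange that the symbol $n$ sits adjacent to a symbol at distance exactly $m$, applies the single swap $(n,1)$, and verifies by case analysis that the result has inner distance \emph{exactly} $m-1$; iterating produces a square of each inner distance in $[1,m]$ and tells you explicitly which permutation to apply at each stage. You instead connect the extremal square to an inner-distance-$1$ square by an arbitrary word in the adjacent transpositions $\tau_1,\dots,\tau_{n-1}$ and invoke a discrete intermediate value argument, which spares you from having to engineer a swap that decreases the distance by exactly one (you only need the ``at most one'' bound, which follows from the triangle inequality for the cyclic metric), at the cost of being non-constructive about which square in the chain realizes a given $k$. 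Both proofs are complete; yours is arguably the cleaner route to bare existence, while the paper's yields the sharper statement that a single well-chosen consecutive swap always drops the inner distance by exactly one.
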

\begin{proof}
The proof is constructive. Suppose the maximum inner distance $m$ over $X$ is greater than 1. This means there exists a Latin square $L$ with inner distance $m>1$, and no square in $X$ with inner distance larger than $m$. Within $L$ there exists a pair of adjacent symbols with distance $m$, say $x$ and $x+m \pmod{n}$. Since symbol permutation is a closed operation, by addition of Latin squares we may assume $x = n$, so that $n$ is adjacent to $m$. 

The permutation swap $\sigma = (n, 1)$, creates a square $\sigma(L) = M\in X$. We wish to show the inner distance of $M$ is $m-1$. We do so by checking the distance of every pair of adjacent symbols. 

Suppose $y_1, y_2$ are adjacent symbols in $L$, thus $\dist(y_1, y_2)\geq m$ by assumption. 

1) If $y_1,y_2\not\in \{1, n\}$, then $y_1, y_2$ are unaffected by $\sigma$ and are still adjacent in $M$.

2) We cannot have $\{y_1, y_2\} = \{1, n\}$, otherwise $L$ has inner distance 1, contradicting $m > 1$.

3) Suppose without loss of generality (WLOG), $y_1 = n$ and $y_2\neq 1$. Since $\dist(n, y_2) \geq k$, we have $k\leq y_2 \leq n-k$, and thus $k - 1\leq y_2-1\leq n-k-1$. Therefore $\dist(y_2, 1)\geq k-1$. A similar argument holds if $y_1 = 1$ and $y_2\neq n$.

In particular, a cyclic permutation of consecutive integers $(x, x+1)$ changes the inner distance by at most $\pm 1$. In our example, the symbols 1 and $m$ are adjacent in $M$, so $M$ has inner distance $m-1$. The argument only assumes $m>1$, so the same holds for all $k\in [2,m]$, thus there exists squares in $X$ of any inner distance in $[1, m]$.
\end{proof}
Two Latin squares $L$ and $L'$ are \textbf{isotopic} if $L$ and $L'$ differ by some combination of symbol permutation, row permutations, and column permutations. This definition is only used here:
\begin{corol}
There exists Latin squares of any inner distance up to $\floor{\frac{n-1}{2}}$. For any square $L$ of inner distance $k$, $L$ is isotopic to squares of all inner distances less than $k$, by just symbol permutation. 
\end{corol}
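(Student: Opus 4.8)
The plan is to deduce the whole statement from the theorem just proved together with Theorem~\ref{MiD of squares}. First I would note that the opening sentence is immediate once the second sentence is in hand: Theorem~\ref{MiD of squares} produces a Latin square $L_0$ of inner distance exactly $\floor{\frac{n-1}{2}}$, and if $L_0$ is carried by symbol permutations to squares of every smaller inner distance, then together with $L_0$ itself we have exhibited a square for each value in $[1,\floor{\frac{n-1}{2}}]$. (Equivalently, one may apply the preceding theorem directly to the set $X$ of all Latin squares of order $n$, which is obviously closed under symbol permutation and, for $n\geq 5$, has maximum inner distance $\floor{\frac{n-1}{2}}>1$.) So all the content lies in the second sentence.

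For that, the key observation is that the proof of the preceding theorem never used that $m$ was the \emph{maximum} inner distance over $X$; it used only $m>1$, and solely to rule out the pair $\{1,n\}$ being adjacent. Hence the same argument gives the one‑step descent I need: if $L$ is any Latin square of inner distance $k>1$, there is a symbol permutation $\tau$ with $\tau(L)$ of inner distance exactly $k-1$. Concretely, I would pick an adjacent pair in $L$ realizing distance $k$; since $k\leq\floor{\frac{n-1}{2}}<\frac{n}{2}$ we have $\dist(n,k)=k$, so after an addition (itself a symbol permutation) we may assume the two symbols of that pair are $n$ and $k$. Applying $\sigma=(n,1)$ then leaves every adjacent pair at distance $\geq k-1$ by the three cases in that proof, while the distinguished pair becomes $\{1,k\}$, at distance $k-1$; thus the composite $\tau=\sigma\circ(\text{addition})$ is a symbol permutation sending $L$ to a square of inner distance exactly $k-1$.

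Finally I would iterate: starting from $L$ of inner distance $k$, descend to inner distance $k-1$, then (if $k-1>1$) to $k-2$, and so on down to $1$. Each step composes another symbol permutation, and a composition of symbol permutations is again a symbol permutation, so for every $j\in[1,k-1]$ a single symbol permutation carries $L$ to a square of inner distance $j$ (and $j=k$ is the identity). Since symbol permutation is a special case of isotopy, this is actually stronger than the stated isotopy claim, and combining it with Theorem~\ref{MiD of squares} as above yields the first sentence. The only point requiring care — and there is no real obstacle beyond it — is the remark that the earlier argument is insensitive to whether one starts at the maximal inner distance or an arbitrary one: everything it used was ``current inner distance $>1$,'' plus the elementary fact that $\dist(n,k)=k$ under the standing bound $k\leq\floor{\frac{n-1}{2}}$.
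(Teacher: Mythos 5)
Your proposal is correct and follows essentially the route the paper intends: the paper states this corollary without a separate proof precisely because, as you observe, the preceding theorem's descent argument uses only that the current inner distance exceeds $1$ (the theorem's own proof even remarks ``the argument only assumes $m>1$''), so iterating the addition-plus-$(n,1)$-swap step from an arbitrary square of inner distance $k$ gives symbol permutations realizing every smaller inner distance. Your normalization of the distinguished pair to $\{n,k\}$ via $\dist(n,k)=k$ for $k\leq\floor{\frac{n-1}{2}}$ is a valid filling-in of the detail the paper leaves implicit.
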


\begin{conj}
Let $m = \floor{\frac{n-1}{2}}$. Then:
\[\left|\mathrm{LS}(n, 1)\right| \geq |\mathrm{LS}(n, 2)|\geq \dots \geq |\mathrm{LS}(n, m)|\]
\end{conj}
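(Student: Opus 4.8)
I have not been able to settle this monotonicity, but here is the line of attack I find most promising. The natural goal is to construct, for every $k$ with $1\le k<m$, an injection $\Phi_k\colon \mathrm{LS}(n,k+1)\hookrightarrow \mathrm{LS}(n,k)$; composing these across $k$ would deliver the whole chain $|\mathrm{LS}(n,1)|\ge\cdots\ge|\mathrm{LS}(n,m)|$ at once. Much of the raw material is already available: the theorem above (whose proof shows that transposing two consecutive symbols changes inner distance by at most $\pm1$) guarantees that every $L\in\mathrm{LS}(n,k+1)$ contains a pair of adjacent cells of distance exactly $k+1$, with symbols $\{a,b\}$ say where $b\equiv a+(k+1)$; applying the transposition $\sigma=(a,a+1)$ slides $a$ one step toward $b$, dropping that pair to distance exactly $k$ while --- by the same case analysis as in that proof --- leaving every other adjacency at distance $\ge k$. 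So $\sigma(L)\in\mathrm{LS}(n,k)$ always exists; the entire difficulty is injectivity.

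To get a well-defined map I would work with the addition-classes of Theorem~\ref{equivalence by adding}, which partition each $\mathrm{LS}(n,k)$ into blocks of size $n$, so that it suffices to inject classes into classes. On a normalized representative $L$ (with $m_{1,1}=1$, available by Corollary~\ref{choice of symbol}), let $C$ be the pair of adjacent cells of distance $k+1$ whose coordinates are lexicographically smallest, read off its symbols $\{a,b\}$ with $b\equiv a+(k+1)$ (the bound $k+1\le\floor{\frac{n-1}{2}}$ makes this labelling unambiguous), and define $\Phi_k$ to send the class of $L$ to the class of $(a,a+1)(L)$. Everything up to this point is routine to verify.

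The hard part --- and the reason the statement is only a conjecture --- is proving $\Phi_k$ injective, i.e. reconstructing the class of $L$ from the class of $M=(a,a+1)(L)$. One would want to read off from $M$ both the distinguished pair $C$ and the symbol $a$, then apply the involution $(a,a+1)$ again. But a single symbol transposition disturbs \emph{every} adjacency that touches $a$ or $a+1$, so $M$ may acquire several distance-$k$ pairs, coming from several different dropped adjacencies, and it is not clear that the lexicographically smallest one is the image of $C$, nor that any particular consecutive-symbol transposition undoing it lands back inside $\mathrm{LS}(n,k+1)$. Ruling this out seems to require a structural understanding of the squares in $\mathrm{LS}(n,k)$ for general $k$ --- precisely what we have only at the extreme $k=m$, where the classification of this paper applies.

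Consequently I would first attempt the weaker and more tractable endpoint $|\mathrm{LS}(n,m-1)|\ge|\mathrm{LS}(n,m)|$: the right-hand side is computed here, and since inner distance $m-1$ sits just one step below the fully classified case, the difference-row and difference-matrix machinery together with the graph-theoretic analysis of this paper may plausibly yield $|\mathrm{LS}(n,m-1)|$ exactly, or at least a matching lower bound. In parallel, two sanity checks: a transfer-matrix or generating-function count of $k$-admissible Latin rectangles built row by row, examined for monotonicity (or log-concavity) in $k$; and a direct computation against the known census of Latin squares for $n\le 11$, which would either reveal a counterexample or expose the extremal configurations that any injective $\Phi_k$ must respect. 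My expectation is that the injection above is the right framework but that establishing its injectivity is the principal obstacle, and may well need an idea not yet in this paper.
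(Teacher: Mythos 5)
The statement you are addressing is labelled a \emph{Conjecture} in the paper, and the paper offers no proof of it; there is therefore nothing on the author's side to compare your argument against. Your proposal is, by your own account, not a proof either, so the honest verdict is that the gap you name --- injectivity of the map $\Phi_k$ --- is precisely the gap, and it remains open. That said, the pieces you do claim are sound and consistent with the paper's machinery: a square of inner distance $k+1$ necessarily realizes that distance on some adjacent pair; the labelling $b\equiv a+(k+1)$ is unambiguous because $k+1\le\floor{\frac{n-1}{2}}<\frac{n}{2}$; the paper's case analysis for the transposition of consecutive symbols does show that every adjacency drops by at most one, so $(a,a+1)(L)$ lands in $\mathrm{LS}(n,k)$; and the construction is equivariant under addition (replacing $L$ by $L+i$ replaces $(a,a+1)$ by $(a+i,a+i+1)$ and the distinguished cell pair is unchanged), so descending to addition classes is legitimate.

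Where you should be careful is in presenting the existence of \emph{some} map $\mathrm{LS}(n,k+1)\to\mathrm{LS}(n,k)$ as ``much of the raw material'': a non-injective map gives no inequality at all, and the obstruction you describe is not a technicality. The transposition $(a,a+1)$ perturbs every adjacency touching $a$ or $a+1$, so $M=(a,a+1)(L)$ can acquire distance-$k$ pairs at cells unrelated to $C$, and different squares $L$ can plausibly collide. Without either a canonical way to recover $(C,a)$ from $M$ alone, or a counting argument bounding the fibers of $\Phi_k$, the chain of inequalities does not follow. Your fallback suggestions are reasonable --- in particular, the endpoint $|\mathrm{LS}(n,m-1)|\ge|\mathrm{LS}(n,m)|$ is the one case where the paper's classification gives enough structural control on the target to hope for an explicit injection or an exact count --- but as written the proposal establishes nothing beyond what the paper's existence theorem already gives, and the conjecture stands.
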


\subsection{Graph Theory and Difference Rows}\label{Graph Theory subsection}
A \textbf{graph} is a collection of vertices and edges, usually denoted by a non-empty set of vertices, and set of edges that `connect' the vertices. In this paper, we only need some basic examples and definitions:
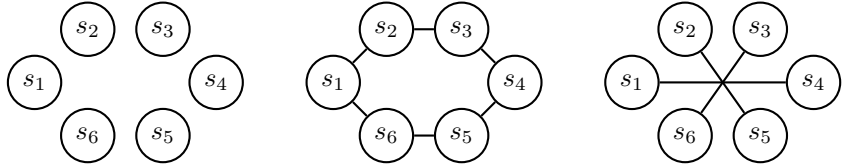
\begin{figure}[H]
    \centering
    \begin{tikzpicture}[thick, main/.style = {draw, circle}] 
        \node[main] (1) {$s_1$};
        \node[main] (2) [above right of=1] {$s_2$};
        \node[main] (3) [right of=2] {$s_3$}; 
        \node[main] (4) [below right of=3] {$s_4$};
        \node[main] (5) [below left of=4] {$s_5$}; 
        \node[main] (6) [left of=5] {$s_6$};
    \end{tikzpicture}
    \qquad
    \begin{tikzpicture}[thick, main/.style = {draw, circle}] 
        \node[main] (1) {$s_1$};
        \node[main] (2) [above right of=1] {$s_2$};
        \node[main] (3) [right of=2] {$s_3$}; 
        \node[main] (4) [below right of=3] {$s_4$};
        \node[main] (5) [below left of=4] {$s_5$}; 
        \node[main] (6) [left of=5] {$s_6$};
        \draw (1) -- (2);
        \draw (2) -- (3);
        \draw (3) -- (4);
        \draw (4) -- (5);
        \draw (5) -- (6);
        \draw (6) -- (1);
    \end{tikzpicture}
    \qquad
    \begin{tikzpicture}[thick, main/.style = {draw, circle}] 
        \node[main] (1) {$s_1$};
        \node[main] (2) [above right of=1] {$s_2$};
        \node[main] (3) [right of=2] {$s_3$}; 
        \node[main] (4) [below right of=3] {$s_4$};
        \node[main] (5) [below left of=4] {$s_5$}; 
        \node[main] (6) [left of=5] {$s_6$};
        \draw (1) -- (4);
        \draw (2) -- (5);
        \draw (3) -- (6);
    \end{tikzpicture}
    \caption{Three examples of graphs using the same basic vertices structure.}
    \label{graph examples}
\end{figure}

Where edges `touch' are not necessarily vertices. The vertices and edges are specified by sets denoted $V, E$. Let $G_1, G_2, G_3$ denote the graphs on the left, center, and right in figure \ref{graph examples}. For all three graphs $V = \{s_1, s_2, s_3, s_4, s_5, s_6\}$; for $G_1$, $E = \emptyset$, while for $G_2$ we can express $E$ as $\{(1,2), (2, 3), (3, 4), (4, 5), (5, 6), (6,1)\}$, and for $G_3$ we have $E = \{(1,4), (2,5), (3,6)\}$. Some basic properties we assume are: each pair of distinct vertices has at most 1 edge between them, and that there are a finite number of vertices. Lastly, graph $G$ is a \textit{subgraph} of graph H if the vertex and edge sets from $G$ are subsets of those from $H$.

The complete graph $K_n$ of order $n$ is the graph with $n$ vertices, and an edge between any pair of distinct vertices. There is a natural connection to Latin rows. Consider the following example:
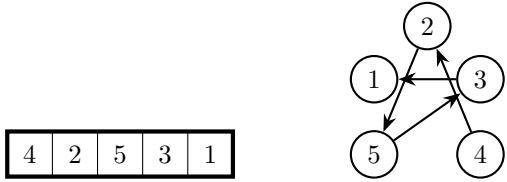
\begin{figure}[H]
    \centering
    \begin{tikzpicture}[scale=.6]
    \draw(0,0)grid(5,1); 
    \draw[step=5,ultra thick](0,0)rectangle(5,1);
    \foreach\x[count=\i] in{4, 2, 5, 3, 1}{\node at(\i-0.5,0.5){$\x$};};
    \end{tikzpicture}
    \qquad\qquad
    \begin{tikzpicture}[node distance = 10mm, thick, main/.style = {draw, circle}] 
        \node[main] (1) {$1$};
        \node[main] (2) [above right of=1] {$2$};
        \node[main] (3) [below right of=2] {$3$}; 
        \node[main] (4) [below of=3] {$4$};
        \node[main] (5) [below of=1] {$5$}; 
        \draw[-{Stealth}] (4) -- (2);
        \draw[-{Stealth}] (2) -- (5);
        \draw[-{Stealth}] (5) -- (3);
        \draw[-{Stealth}] (3) -- (1);
    \end{tikzpicture}
    \caption{A row of maximum inner distance (2 for $n=5$) as well as an \textit{induced} graph.}
    \label{row to graph connection}
\end{figure}

Construct a graph $G$ with vertex set $[1,n]$, and place the vertices in the natural order on a circle. For some Latin row $r = (s_1, s_2, \dots, s_n)$, add an edge $h_i$ between vertices $s_i$ and $s_{i+1}$ for $1\leq i\leq n-1$. This graph $G$ is a subgraph of $K_n$. The vector $(h_1, h_2, \dots, h_{n-1})$ now describes a list of vertices to take along $K_n$, starting at $s_1$ and ending at $s_n$. In contrast, any path that traverses every vertex in $K_n$ exactly once can be described as a Latin row $r$. But the edges to take depend on which vertex one starts on. We formalize these definitions now:
\begin{defn}
A \textbf{Hamiltonian path}, or simply a \textbf{path}, is an ordered list of edges of a graph $G$, $(e_1, e_2, \dots, e_{n-1})$ that `traverses' every vertex in $G$ exactly once, starting at some specified vertex $s_1$. A \textbf{Hamiltonian cycle} is the result of adding the edge back to $s_1$ at the end of a path.
\end{defn}
\begin{defn}
For some $k \leq \floor{\frac{n-1}{2}}$, take $V= \{1, 2, \dots, n\}$ and $E$ the set of all possible edges between symbols $x,y$ in $V$ such that $\dist(x,y) \geq k$. This will be called the \textbf{distance $k$ graph}. The distance $\floor{\frac{n-1}{2}}$ graph is the \textbf{maximum distance graph}, while the distance 1 graph is the complete graph $K_n$.
\end{defn}
\begin{defn}
For any Latin row $r = (s_1, s_2, \dots, s_n)$ with entries in $[1,n]$, the vector $d=(h_1, h_2, \dots, h_{n-1})$ defined by $h_j \equiv s_{j+1} - s_j$ is the \textbf{difference row} $d$ of $r$. A \textbf{normal row} is a Latin row with $s_1 = 1$.
\end{defn}
We can also define \textit{additions} for rows: $r=(s_1, \dots, s_n)$ and $r'=(s_1', \dots, s_n')$, have the same difference row $d$ if and only if for all $j\in [1,n]$, $s_j' \equiv s_j + i$ for some fixed $i\in [1, n]$; that is if $r'$ is an addition of $r$. The proof of the following theorem falls from our discussions above. We can also define additions to rows:
\begin{thm}
A row of inner distance $k$ is equivalent to a Hamiltonian path on the distance $k$ graph. Hamiltonian paths starting at 1 are equivalent to normal rows, which induce unique difference rows. 
\end{thm}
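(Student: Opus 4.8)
The plan is to prove the theorem by unwinding the definitions and exhibiting explicit, mutually inverse maps between the three objects in play: Latin rows of inner distance (at least) $k$, Hamiltonian paths on the distance $k$ graph, and — after fixing the starting vertex — normal rows together with their difference rows. Nothing deep is required; the content is in checking that each construction lands where it should and that the maps compose to the identity.

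First I would handle the equivalence between rows and Hamiltonian paths. Given a Latin row $r = (s_1, s_2, \dots, s_n)$ with entries in $[1,n]$, the entries $s_1, \dots, s_n$ form a permutation of $[1,n]$ because a Latin row has no repeated symbols and has length $n$; hence the sequence of edges $(s_1 s_2), (s_2 s_3), \dots, (s_{n-1} s_n)$ visits every vertex of the vertex set $[1,n]$ exactly once, i.e.\ is a Hamiltonian path in the sense defined, with start vertex $s_1$. Each such edge $(s_j s_{j+1})$ lies in the distance $k$ graph precisely when $\dist(s_j, s_{j+1}) \geq k$, and requiring this for every $j$ is exactly the statement that the inner distance of $r$ is at least $k$. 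Conversely, a Hamiltonian path $(e_1, \dots, e_{n-1})$ on the distance $k$ graph starting at $s_1$ reads off a vertex sequence $s_1, s_2, \dots, s_n$ (taking $s_{j+1}$ to be the endpoint of $e_j$ other than $s_j$, which is automatically new), which is a permutation of $[1,n]$ and hence a Latin row, whose consecutive distances are all $\geq k$ by construction. These two passages are inverse to each other, which gives the first sentence. I would also flag the one mild imprecision worth addressing: the distance $k$ graph contains all edges of distance $\geq k$, so a Hamiltonian path on it produces a row of inner distance $\geq k$; to land on inner distance exactly $k$ one additionally asks that the path use at least one edge of distance exactly $k$, and this refinement is harmless for everything that follows.

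Next I would specialize the starting vertex. By definition a normal row is a Latin row with $s_1 = 1$, so under the correspondence just established these are exactly the Hamiltonian paths whose specified start vertex is $1$; the restriction of a bijection is a bijection, so this is immediate. Finally, for the difference rows: the map sending a normal row $r$ to its difference row $d = (h_1, \dots, h_{n-1})$ with $h_j \equiv s_{j+1} - s_j$ is a well-defined function by construction, so each normal row induces a unique difference row. For injectivity (which is what makes ``unique'' have content) I would simply invert it: from $d$ and the constraint $s_1 = 1$ one recovers $s_{j} \equiv 1 + h_1 + h_2 + \dots + h_{j-1}$ by telescoping, so $r$ is determined by $d$. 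This is the same computation as the observation recorded just before the theorem, that two rows share a difference row iff they are additions of one another; fixing $s_1 = 1$ merely selects one representative from each addition class.

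I do not expect a genuine obstacle here — the theorem is a formalization step, and the discussion preceding it already does the real work. The only place where care is needed is keeping the quantifier straight (inner distance $= k$ versus $\geq k$) and being precise about the word ``unique'': it should be read as ``the assignment $r \mapsto d$ is a well-defined injection on normal rows,'' not as a claim that every difference vector arises from a Latin row — it does not, since one needs the partial sums $1 + h_1 + \dots + h_{j-1}$ to be pairwise distinct modulo $n$, and pinning down exactly which difference rows are legal is precisely the business of the later sections.
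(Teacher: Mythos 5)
Your proposal is correct and follows exactly the route the paper intends: the paper gives no written proof, stating only that the result ``falls from our discussions above,'' and your argument is precisely that discussion made explicit (the edge-sequence bijection, restriction of the start vertex to $1$, and recovery of a normal row from its difference row by telescoping partial sums). Your added caveat about inner distance $\geq k$ versus exactly $k$ is a reasonable clarification rather than a departure.
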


\begin{defn}
A \textbf{circulant} Latin square is a square where row $i+1$ is in the same order as row $i$, with entries shifted to the right by one. A \textbf{back-circulant} Latin square is defined similarly, with entries shifted one to the left. 
\end{defn}
\begin{figure}[H]
    \centering
    \begin{tikzpicture}[scale=.6]
    \draw(0,0)grid(6,6); 
    \draw[step=6,ultra thick](0,0)grid(6,6);
    \foreach\x[count=\i] in{1, 3, 5, 2, 6, 4}{\node at(\i-0.5,5.5){$\x$};};
    \foreach\x[count=\i] in{4, 1, 3, 5, 2, 6}{\node at(\i-0.5,4.5){$\x$};};
    \foreach\x[count=\i] in{6, 4, 1, 3, 5, 2}{\node at(\i-0.5,3.5){$\x$};};
    \foreach\x[count=\i] in{2, 6, 4, 1, 3, 5}{\node at(\i-0.5,2.5){$\x$};};
    \foreach\x[count=\i] in{5, 2, 6, 4, 1, 3}{\node at(\i-0.5,1.5){$\x$};};
    \foreach\x[count=\i] in{3, 5, 2, 6, 4, 1}{\node at(\i-0.5,0.5){$\x$};};
    \end{tikzpicture}
    \qquad
    \begin{tikzpicture}[scale=.6]
    \draw(0,0)grid(7,7); 
    \draw[step=7,ultra thick](0,0)grid(7,7);
    \foreach\x[count=\i] in{1, 2, 3, 4, 5, 6, 7}{\node at(\i-0.5,6.5){$\x$};};
    \foreach\x[count=\i] in{2, 3, 4, 5, 6, 7, 1}{\node at(\i-0.5,5.5){$\x$};};
    \foreach\x[count=\i] in{3, 4, 5, 6, 7, 2, 3}{\node at(\i-0.5,4.5){$\x$};};
    \foreach\x[count=\i] in{4, 5, 6, 7, 2, 3, 4}{\node at(\i-0.5,3.5){$\x$};};
    \foreach\x[count=\i] in{5, 6, 7, 2, 3, 4, 5}{\node at(\i-0.5,2.5){$\x$};};
    \foreach\x[count=\i] in{6, 7, 2, 3, 4, 5, 6}{\node at(\i-0.5,1.5){$\x$};};
    \foreach\x[count=\i] in{7, 2, 3, 4, 5, 6, 1}{\node at(\i-0.5,0.5){$\x$};};
    \end{tikzpicture}
    \caption{An example of a circulant (left) and back-circulant Latin square (right).}
    \label{circulant examples}
\end{figure}
Observe that: only one row is needed to define either kind of circulant, namely the first row is repeated all the way down. Further, each symbol has exactly two neighbors throughout the entire square. As a result:

\begin{thm}\label{circulants equal to cycles}
A circulant Latin square of inner distance $k$ is equivalent to a Hamiltonian cycle on the distance $k$ graph. The same is true for back-circulants. 
\end{thm}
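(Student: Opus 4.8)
The plan is to build an explicit dictionary between a circulant and the cyclic sequence of symbols in its first row, and then to show that the inner distance of the circulant is controlled entirely by the cyclic adjacencies of that sequence. First I would record the bookkeeping: if $L$ is the circulant whose first row is $r = (s_1, \dots, s_n)$, then $m_{i,j} = s_{j-i+1}$, with the subscript read modulo $n$ inside $[1,n]$. This formula makes two facts immediate — each column of $L$ is a cyclic reordering of $r$, so $L$ is genuinely a Latin square and, conversely, every Latin row yields a valid circulant; and $L$ is recovered from, and determined by, its first row.

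The heart of the argument is to compute the complete list of pairs of adjacent symbols of $L$. Horizontally, cells $(i,j)$ and $(i,j+1)$ hold $s_{j-i+1}$ and $s_{j-i+2}$, and letting $i,j$ vary shows these are exactly the $n$ consecutive pairs $\{s_t, s_{t+1}\}$ of the cyclic sequence $s_1, s_2, \dots, s_n, s_1$. Vertically, cells $(i,j)$ and $(i+1,j)$ hold $s_{j-i+1}$ and $s_{j-i}$, again one of those same pairs — so vertical adjacency contributes nothing new (this is the ``each symbol has exactly two neighbours'' observation made just before the statement). Hence the pairs of adjacent symbols of $L$ are precisely the edges of the Hamiltonian cycle $C\colon s_1 - s_2 - \dots - s_n - s_1$ on $K_n$, the inner distance of $L$ equals $\min_t \dist(s_t, s_{t+1})$, and therefore $L$ has inner distance at least $k$ if and only if every edge of $C$ joins symbols at distance $\geq k$, i.e.\ exactly when $C$ is a Hamiltonian cycle on the distance $k$ graph.

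To finish I would package this as an equivalence in the style of the earlier row/path theorem: reading a Hamiltonian cycle on the distance $k$ graph — with its distinguished start vertex $s_1 = m_{1,1}$ and orientation $s_1 \to s_2 = m_{1,2}$ — as a first row and forming the associated circulant is inverse to sending a circulant to the cycle $C$ of its adjacent pairs, and the previous paragraph shows this correspondence respects inner distance. The back-circulant case is the same computation with $m_{i,j} = s_{j+i-1}$: both horizontal and vertical adjacencies again collapse to the cyclic pairs $\{s_t, s_{t+1}\}$, giving the identical conclusion; alternatively one can note that a back-circulant is a circulant with its rows listed in reverse, a reflection that leaves the set of pairs of adjacent cells unchanged.

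The one place anything can go wrong — and hence the main obstacle — is the index chase of the second paragraph: one must keep the modular arithmetic on row and column indices consistent, and in particular verify honestly that no vertical adjacency produces a symbol pair outside the cyclic list. I would also be explicit about the mild looseness, inherited from the companion row theorem, in the phrase ``inner distance $k$'': the sharp statement is that circulants of inner distance $\geq k$ are in bijection with Hamiltonian cycles of the distance $k$ graph, and since the applications in this paper take $k = \floor{\frac{n-1}{2}}$, where $\geq k$ already forces $= k$, nothing is lost.
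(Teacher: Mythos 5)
Your proposal is correct and follows essentially the same route as the paper: both arguments reduce the circulant to its first row and observe that every pair of adjacent cells in the square carries a cyclically consecutive pair $\{s_t, s_{t+1}\}$ (including $\{s_n, s_1\}$), so the square's inner distance is governed exactly by the edges of the Hamiltonian cycle. Your version just makes the index bookkeeping and the $\geq k$ versus $=k$ caveat explicit where the paper leaves them implicit.
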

\begin{proof}
A Hamiltonian cycle on the distance $k$ graph corresponds to a Latin row of distance $k$, such that the first and last entries also are distance $k$ apart. If $r = (s_1, s_2, \dots, s_n)$ is the first row in a circulant Latin square $L$, then across all of $L$: $s_j$ is only adjacent to $s_{j-1}$ and $s_{j+1}$ for $2\leq j\leq n-1$, while $s_1$ and $s_n$ are also adjacent to each other. It follows that the choice of Hamiltonian cycle and starting entry $s_1$ determine a unique circulant Latin square, and vice versa. The same argument holds for back-circulants.
\end{proof}

Certain operations on Latin rectangles that preserve inner distance correspond to nice symmetries of Hamiltonian paths on the distance $k$ graph. The difference row $d=(h_1, h_2, \dots, h_{n-1})$ corresponds to a Hamiltonian path starting at 1 on the distance $k$ graph, with vertices placed evenly and in order on a circle. 
\begin{enumerate}
    \item Additions of a row by $i$, corresponds to rotating the edges of the path around the circle by $i$ vertices.
    \item Reversing a difference row $d = (h_1, h_2, \dots, h_{n-1})\to (h_{n-1}, h_{n-2}, \dots, h_1) = d'$ corresponds to finding the Hamiltonian path starting at 1 with difference row $d'$.
    \item Negating a difference row $d \to -d$, where $-d$ sends $h_j\to -h_j\pmod{n}$ corresponds to reflecting the edges around the line from 1 to the origin.
\end{enumerate}
For Hamiltonian cycles, there is another symmetry used in section \ref{classification of cycles}.

\subsection{Difference Matrices}
\begin{defn}
Let $M$ be an $m\times n$ (or $n\times m$) matrix with $1 < m\leq n$ and symbols from $[1, n]$. As usual the symbol in cell $(i,j)$ is $m_{i,j}$. Define matrices $H$ and $V$ as follows: let $h_{i,j}$ and $v_{i,j}$ denote the symbol in cell $(i,j)$ of $H,V$, where $H$ is an $(m-1)\times n$ matrix and $V$ is an $m\times (n-1)$ matrix. Define $h_{i,j} \equiv m_{i,j+1} - m_{i,j}$ for $j\in [1, n-1]$ and $v_{i,j}$ as $v_{i,j} \equiv m_{i+1,j} - m_{i,j}$. $H$ is the \textbf{horizontal difference matrix} of $L$, and $V$ is the \textbf{vertical difference matrix}. $H$ and $V$ are said to be the \textbf{difference matrices} of $M$.
\end{defn}
These matrix analogs of difference rows, if given a starting symbol $m_{1,1}$, also uniquely determine a Latin square $L$, though the conditions for $H,V$ need to be stated precisely:

\begin{lemma}\label{H V conditions}
Suppose $H,V$ are matrices with entries denoted $h_{i,j}$ and $v_{i,j}$ from $[1,n]$, with $H$ an $m\times (n-1)$ matrix and $V$ an $(m-1)\times n$ matrix, for some $2\leq m\leq n$. 
\begin{enumerate}
    \item[0.] A vector $d = (h_1, h_2, \dots, h_{n-1})$ is the difference row of a Latin row if and only if for all $j_1, j_2$ with $1\leq j_1<j_2\leq n-1$, we have $\sum_{j=j_1}^{j_2} h_{j} \not \equiv 0$.
    \item $H$ and $V$ define a unique $m\times n$ matrix with a 1 in the top-left-most entry if and only if for all appropriate $(i,j)$ the entries satisfy
    \[h_{i,j} + v_{i, j+1} \equiv v_{i, j} + h_{i+1, j} \pmod{n}\]
    Assuming Condition 1 holds, $H,V$ define a unique $m\times n$ Latin rectangle if and only if both of the following conditions are met:

    \item For each row $i$, and any $1\leq j_1<j_2\leq n-1$, we have $\sum_{j = j_1}^{j_2} h_{i,j}\not\equiv 0$.
    
    \item For each column $j$, and any $1\leq i_1<i_2\leq m-1$, we have $\sum_{i = i_1}^{i_2} v_{i,j}\not\equiv 0$.
\end{enumerate}
A similar statement holds true if the dimensions of $H,V$ are swapped.
\end{lemma}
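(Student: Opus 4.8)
The plan is to dispatch the four numbered conditions in order, since each reduces to the previous one together with a single observation: a sequence of residues modulo $n$ is repeat-free exactly when its sequence of partial sums is injective, which in turn is exactly the requirement that every sum over a consecutive block of terms is nonzero mod $n$. Throughout I read the index ranges with $\le$ in both slots (e.g.\ $1\le j_1\le j_2\le n-1$ in Condition 0 and Condition 2); the extra ``diagonal'' cases $j_1=j_2$ just record that each entry of $H$ or $V$ is nonzero, which holds automatically for the difference matrix of an actual rectangle.

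Condition 0 I would prove first, as a one-dimensional warm-up. Given $d=(h_1,\dots,h_{n-1})$, set $S_0=0$ and $S_j\equiv h_1+\dots+h_j$, and let $r$ be the normal row with $s_j\equiv 1+S_{j-1}$; then $h_j\equiv s_{j+1}-s_j$, so $d$ is the difference row of some Latin row iff it is the difference row of $r$ iff the symbols $s_1,\dots,s_n$ are pairwise distinct. Since $s_b-s_a\equiv\sum_{j=a}^{b-1}h_j$, that happens precisely when every block sum $\sum_{j=j_1}^{j_2}h_j$ is nonzero, which is Condition 0. It is worth recording here that any other Latin row with difference row $d$ is an addition of $r$, which is why fixing $s_1=1$ costs nothing.

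For Condition 1, the ``only if'' direction is immediate: if $M$ has $m_{1,1}=1$ with these difference matrices, then both $h_{i,j}+v_{i,j+1}$ and $v_{i,j}+h_{i+1,j}$ equal $m_{i+1,j+1}-m_{i,j}$, and moreover $M$ is forced by $m_{i,j}\equiv 1+\sum_{j'=1}^{j-1}h_{1,j'}+\sum_{i'=1}^{i-1}v_{i',j}$, so at most one such $M$ exists. For ``if'', I would take that last formula as the \emph{definition} of $M$; then $m_{1,1}=1$, the vertical relations $m_{i+1,j}-m_{i,j}\equiv v_{i,j}$ hold by construction, and so do the horizontal relations in the first row, so the only thing left to verify is the horizontal relations in rows $i\ge 2$. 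There $m_{i,j+1}-m_{i,j}\equiv h_{1,j}+\sum_{i'=1}^{i-1}(v_{i',j+1}-v_{i',j})$, and the hypothesis rewrites each summand as $h_{i'+1,j}-h_{i',j}$, so the sum telescopes to $h_{i,j}-h_{1,j}$ and we are left with $h_{i,j}$, as wanted. (Invariantly: $(H,V)$ is a discrete gradient field on the grid iff its circulation around every unit square vanishes, the grid being simply connected --- but the telescoping argument is elementary and self-contained.) This telescoping identity is the one genuine step, and I expect it to be the main obstacle, or at least the only place one can slip, since it is exactly where the precise shape of the commutation relation gets used.

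Finally, Conditions 2 and 3. Once Condition 1 holds, $M$ is the matrix just built and ``$H,V$ define a unique Latin rectangle'' means exactly that $M$ is a Latin rectangle, i.e.\ every row and every column of $M$ is repeat-free. By the horizontal relation, row $i$ of $M$ is a Latin row whose difference row is row $i$ of $H$, so Condition 0 applied to it says that row $i$ is repeat-free iff every $\sum_{j=j_1}^{j_2}h_{i,j}$ is nonzero --- Condition 2 for that $i$. By the vertical relation, column $j$ of $M$ has consecutive differences $v_{1,j},\dots,v_{m-1,j}$, and the identical partial-sums argument (now for a length-$m$ sequence) gives that column $j$ is repeat-free iff every $\sum_{i=i_1}^{i_2}v_{i,j}$ is nonzero --- Condition 3 for that $j$. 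Hence $M$ is a Latin rectangle iff all instances of Conditions 2 and 3 hold. The ``swapped dimensions'' version then follows by running the whole argument after transposing, under which $H$ and $V$ exchange roles; I do not anticipate any difficulty there beyond keeping the two index ranges apart.
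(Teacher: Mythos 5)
Your proposal is correct and follows essentially the same route as the paper: Condition 0 via the partial-sum/injectivity observation, Condition 1 by explicitly constructing $M$ from cumulative sums and checking consistency, and Conditions 2 and 3 by applying Condition 0 to each row and column of the constructed rectangle. The only difference is that you carry out the telescoping verification that the constructed $M$ actually recovers $h_{i,j}$ in rows $i\geq 2$ (and you sensibly read the index ranges as $j_1\leq j_2$), details which the paper's proof leaves implicit.
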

\begin{proof}
Statements 0 gives a necessary and sufficient condition for a difference row to determine a unique Latin row: since no symbol in $r$ repeats, the difference between any two symbols in $r$ is non-zero. Statements 2 and 3 follow the same logic, for entries in a Latin rectangle.

Statement 1 is a necessary and sufficient condition for $H,V$ to define a matrix: suppose that all $i,j$ satisfy Condition 1 of $H,V$. Then $m_{1,1} = 1$, and $m_{i,j} = m_{1,1} + \sum_{k=1}^{j-1}v_{k, 1} + \sum_{k=1}^{j-1}h_{i, k}$ define a unique matrix $M$. Now if $M$ has $H,V$ as its difference matrices, then the condition must clearly hold by definition of $h_{i,j}$ and $v_{i,j}$ terms. See the following diagram.
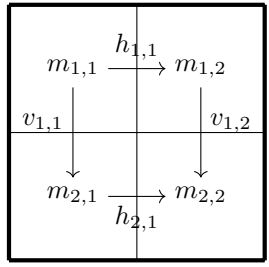
\begin{figure}[H]
    \centering
    \begin{tikzpicture}[scale=1.7]
    \draw(0,0)grid(2,2); 
    \draw[step=2,ultra thick](0,0)grid(2,2);
    \node  at(0.5, 1.5) (1) {$m_{1,1}$};
    \node  at(1.5, 1.5) (2) {$m_{1,2}$};
    \node  at(0.5, 0.5) (3) {$m_{2,1}$};
    \node  at(1.5, 0.5) (4) {$m_{2,2}$};
    \draw[->] (1) -- node[midway, above] {$h_{1,1}$} (2);
    \draw[->] (2) -- node[midway, right, pos=0.4] {$v_{1,2}$} (4);
    \draw[->] (1) -- node[midway, left, pos=0.4] {$v_{1,1}$} (3);
    \draw[->] (3) -- node[midway, below] {$h_{2,1}$} (4);
    \end{tikzpicture}
    \caption{Proof by picture: We must have for any appropriate $i,j$ that $h_{i,j} + v_{i,j+1}\equiv v_{i,j} + h_{i+1, j}\pmod{n}$}
    \label{square condition}
\end{figure}
\end{proof}

The concept of difference matrices, though clunky, allows for easier discussion of operations on Latin squares that preserve inner distance.

\begin{defn}
Let $H$ be an $n\times (n-1)$ matrix and $V$ a $(n-1)\times n$ matrix that together satisfy properties 1-3 from Lemma \ref{H V conditions}. The $n\times n$ matrix induced by $H,V$ with the symbol $s$ in cell $(1,1)$ is denoted $\ell(s, H,V)$.
\end{defn}
\begin{lemma}
Two Latin squares $L$ and $L'$ have the same difference matrices if and only if $L'$ is an addition of $L$. In other words, $L = \ell(s, H, V)$ and $L' = \ell(s', H, V)$ for symbols $s, s'$.
\end{lemma}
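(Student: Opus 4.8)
The plan is to prove both directions of the biconditional, leaning on the earlier Lemma \ref{H V conditions} (and in particular the formula $m_{i,j} = m_{1,1} + \sum_{k=1}^{i-1} v_{k,1} + \sum_{k=1}^{j-1} h_{i,k}$ that appears in its proof) to translate statements about the squares into statements about their difference matrices. First I would observe that $H$ and $V$ are computed from $L$ purely by taking differences $h_{i,j} \equiv m_{i,j+1} - m_{i,j}$ and $v_{i,j} \equiv m_{i+1,j} - m_{i,j}$, so both difference matrices are manifestly invariant under replacing every entry $m_{i,j}$ by $m_{i,j} + t \pmod n$ for a fixed $t$: the constant cancels in each difference. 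That handles the ``if'' direction immediately --- if $L' = L + t$, then $h_{i,j}' \equiv m_{i,j+1}' - m_{i,j}' \equiv (m_{i,j+1}+t) - (m_{i,j}+t) \equiv h_{i,j}$, and likewise for $V$.

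For the ``only if'' direction, suppose $L$ and $L'$ share the same pair $(H,V)$. Let $s = m_{1,1}$ and $s' = m_{1,1}'$, and set $t \equiv s' - s \pmod n$. I would show $L' = L + t$ by induction (or simply by invoking the explicit reconstruction formula from Lemma \ref{H V conditions}): since $m_{i,j}$ is determined by $m_{1,1}$ together with the partial sums of entries of $H$ and $V$ along the first column and the $i$-th row, and since $L$ and $L'$ have identical $H$ and $V$, we get $m_{i,j}' = m_{1,1}' + \big(\text{same sum of }h\text{'s and }v\text{'s}\big) = m_{i,j} + (m_{1,1}' - m_{1,1}) \equiv m_{i,j} + t \pmod n$ for every cell $(i,j)$. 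Hence $L'$ is exactly the addition $L + t$. In the notation $\ell(\cdot,\cdot,\cdot)$, this says precisely that $L = \ell(s,H,V)$ and $L' = \ell(s',H,V)$ for the two starting symbols, which is the restated form of the claim.

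I do not expect a genuine obstacle here; the only point requiring a little care is making sure the reconstruction formula is being applied to the full $n \times n$ range of indices and that the ``same sum'' really is the same --- i.e., that the path of partial sums used to reach cell $(i,j)$ (down column $1$ then across row $i$) uses only entries of $H$ and $V$, with no hidden dependence on the actual symbols of $L$. That is guaranteed by Condition 1 of Lemma \ref{H V conditions}, which ensures the reconstruction is well defined and path-independent, so there is nothing more to check. The argument is short and essentially a corollary of the reconstruction lemma, so I would keep the writeup to a few lines.
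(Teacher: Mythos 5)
Your proposal is correct and is exactly the argument the paper intends: the paper's own proof is just the one-line remark that the lemma "is a generalization of the proof for difference rows," and your writeup spells out that generalization --- translation-invariance of the differences for the ``if'' direction, and the reconstruction formula $m_{i,j} = m_{1,1} + \sum_{k=1}^{i-1} v_{k,1} + \sum_{k=1}^{j-1} h_{i,k}$ from Lemma \ref{H V conditions} for the ``only if'' direction. No gaps; your version is simply more explicit than the paper's.
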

\begin{proof}
This is just a generalization of the proof for difference rows.
\end{proof}

The inspiration for studying difference matrices came from looking at the squares of maximum inner distance for odd $n$. There are exactly $4n$ such squares, corresponding to 4 classes of $n$ additions of squares. The 4 classes for $n=5$ are shown below.
\begin{figure}[H]
    \centering
    \begin{tikzpicture}[scale=.6]
    \draw(0,0)grid(5,5); 
    \draw[step=5,ultra thick](0,0)grid(5,5);
    \foreach\x[count=\i] in{1, 3, 5, 2, 4}{\node at(\i-0.5,4.5){$\x$};};
    \foreach\x[count=\i] in{3, 5, 2, 4, 1}{\node at(\i-0.5,3.5){$\x$};};
    \foreach\x[count=\i] in{5, 2, 4, 1, 3}{\node at(\i-0.5,2.5){$\x$};};
    \foreach\x[count=\i] in{2, 4, 1, 3, 5}{\node at(\i-0.5,1.5){$\x$};};
    \foreach\x[count=\i] in{4, 1, 3, 5, 2}{\node at(\i-0.5,0.5){$\x$};};
    \end{tikzpicture}
    \quad
    \begin{tikzpicture}[scale=.6]
    \draw(0,0)grid(5,5); 
    \draw[step=5,ultra thick](0,0)grid(5,5);
    \foreach\x[count=\i] in{1, 3, 5, 2, 4}{\node at(\i-0.5,4.5){$\x$};};
    \foreach\x[count=\i] in{4, 1, 3, 5, 2}{\node at(\i-0.5,3.5){$\x$};};
    \foreach\x[count=\i] in{2, 4, 1, 3, 5}{\node at(\i-0.5,2.5){$\x$};};
    \foreach\x[count=\i] in{5, 2, 4, 1, 3}{\node at(\i-0.5,1.5){$\x$};};
    \foreach\x[count=\i] in{3, 5, 2, 4, 1}{\node at(\i-0.5,0.5){$\x$};};
    \end{tikzpicture}\\\vspace{0.1in}
    \begin{tikzpicture}[scale=.6]
    \draw(0,0)grid(5,5); 
    \draw[step=5,ultra thick](0,0)grid(5,5);
    \foreach\x[count=\i] in{1, 4, 2, 5, 3}{\node at(\i-0.5,4.5){$\x$};};
    \foreach\x[count=\i] in{4, 2, 5, 3, 1}{\node at(\i-0.5,3.5){$\x$};};
    \foreach\x[count=\i] in{2, 5, 3, 1, 4}{\node at(\i-0.5,2.5){$\x$};};
    \foreach\x[count=\i] in{5, 3, 1, 4, 2}{\node at(\i-0.5,1.5){$\x$};};
    \foreach\x[count=\i] in{3, 1, 4, 2, 5}{\node at(\i-0.5,0.5){$\x$};};
    \end{tikzpicture}
    \quad
    \begin{tikzpicture}[scale=.6]
    \draw(0,0)grid(5,5); 
    \draw[step=5,ultra thick](0,0)grid(5,5);
    \foreach\x[count=\i] in{1, 4, 2, 5, 3}{\node at(\i-0.5,4.5){$\x$};};
    \foreach\x[count=\i] in{3, 1, 4, 2, 5}{\node at(\i-0.5,3.5){$\x$};};
    \foreach\x[count=\i] in{5, 3, 1, 4, 2}{\node at(\i-0.5,2.5){$\x$};};
    \foreach\x[count=\i] in{2, 5, 3, 1, 4}{\node at(\i-0.5,1.5){$\x$};};
    \foreach\x[count=\i] in{4, 2, 5, 3, 1}{\node at(\i-0.5,0.5){$\x$};};
    \end{tikzpicture}
    \caption{A representative of each class with respect to addition of all Latin squares of maximum inner distance for $n=5$ (inner distance 2).}
    \label{MiD of order 5}
\end{figure}
As one may check, there are two horizontal and two vertical difference matrices between the four squares, say $H_1, H_2$ and $V_1,V_2$, with the relation that $H_1 = -H_2$ and $V_1 = -V_2$, where $-L$ denotes the operation $m_{i,j}\to n-m_{i,j}$. In other words, from left to right the two matrices on top are represented by $\ell(1, H, V)$ and $\ell(1, H, -V)$ and the two matrices on the bottom are represented by $\ell(1, -H, V)$ and $\ell(1, -H, -V)$. But in general the 4-fold symmetry does not hold, since the operations $H\to H$ and $V\to -V$ do not preserve Condition 1 from Lemma \ref{H V conditions}. However as one may check the operation $\ell(s, H, V)\to \ell(s, -H, -V)$ always gives a new Latin square. I had originally called $\ell(s, -H, -V)$ the \textbf{distance conjugate} of $\ell(s, H, V)$, which improves Theorem \ref{equivalence by adding} to the following (proof omitted):
\begin{thm}
Distance conjugation, the operation $\ell(s, H, V)\to \ell(s, -H, -V)$ preserves inner distance and pairs addition classes of squares in two. Thus $2n$ divides $|\LSnk|$. Further, distance conjugation is a form of symbol permutation.
\end{thm}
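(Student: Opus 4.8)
The plan is to identify distance conjugation explicitly with a single symbol permutation, after which every assertion in the statement falls out quickly. Fix a square $L = \ell(s, H, V)$ with $(1,1)$-entry $s$, and consider the permutation $\sigma$ of $[1,n]$ given by $\sigma(x) \equiv 2s - x$ (reduced into $[1,n]$). Since $x \mapsto 2s - x$ is an involution on $\Z/n$, it is a bijection on the symbol set, so $\sigma(L)$ is again a Latin square. First I would compute what $\sigma$ does to difference matrices: for adjacent cells $\sigma(m_{i,j+1}) - \sigma(m_{i,j}) \equiv -(m_{i,j+1} - m_{i,j})$, and similarly in the vertical direction, so the horizontal and vertical difference matrices of $\sigma(L)$ are $-H$ and $-V$; moreover $\sigma(m_{1,1}) = 2s - s = s$. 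Invoking the uniqueness of a Latin square from its difference matrices together with its $(1,1)$-entry (Lemma \ref{H V conditions} and the subsequent lemma), we conclude $\sigma(L) = \ell(s, -H, -V)$. In particular this shows $-H, -V$ automatically satisfy Conditions 1--3 of Lemma \ref{H V conditions}, so $\ell(s,-H,-V)$ is genuinely defined, and distance conjugation on $L$ is exactly the symbol permutation $\sigma$ --- equivalently, it is the negation $-L$ followed by addition by $2s$, in the notation introduced above. This already proves the last sentence of the theorem.

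Given this identification, inner distance preservation is immediate: $\sigma$ preserves the distance of any pair of symbols since $\sigma(a) - \sigma(b) \equiv -(a - b)$ and $\dist$ depends only on $\pm(a-b) \bmod n$, hence it preserves the distance between every pair of adjacent cells, so it preserves inner distance and maps $\LSnk$ into itself.

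It remains to show distance conjugation pairs addition classes two-to-one. Writing $\tau$ for distance conjugation, $\tau$ commutes with addition, since $\tau(L + i) = \ell(s+i, -H, -V) = \tau(L) + i$; hence $\tau$ descends to a well-defined map on addition classes, and it is an involution there because $-(-H) = H$ and $-(-V) = V$. This involution has no fixed class: the class of $\ell(s,H,V)$ is fixed only if $(H,V) = (-H,-V)$, i.e.\ every entry of $H$ and of $V$ equals $0$ or $\tfrac{n}{2}$; but a difference-matrix entry is never $0$ (a row has no repeated symbol), and the entries cannot all equal $\tfrac{n}{2}$, since then some row would read $s, s + \tfrac{n}{2}, s, \dots$ and repeat a symbol (recall $n \ge 5$). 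So each addition class is paired with a distinct one. Since each addition class has exactly $n$ elements (Theorem \ref{equivalence by adding}), the orbits of the group generated by additions and $\tau$ all have size $2n$, whence $2n \mid |\LSnk|$.

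The whole argument is short; the one point requiring care is the well-definedness step in the first paragraph --- one should deduce $\ell(s,-H,-V) \in \LSnk$ from the symbol-permutation observation, rather than trying to verify Conditions 1--3 for $-H, -V$ by hand (which, as the text notes, can fail for the mixed operation $H \to H$, $V \to -V$, but never for simultaneous negation). I expect the only step carrying real content to be the fixed-point-free check for the class pairing; everything else is just the routine dictionary between symbol permutations and difference matrices.
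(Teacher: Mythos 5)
The paper gives no proof of this theorem (it is explicitly ``proof omitted''), so there is nothing to diverge from; your argument is correct and follows exactly the route the surrounding text suggests, namely identifying distance conjugation with the symbol permutation $x \mapsto 2s - x$ (negation $-L$ followed by addition of $2s$), which simultaneously yields well-definedness of $\ell(s,-H,-V)$, preservation of inner distance, and the last sentence of the theorem. Your fixed-point-free check --- that $(H,V)=(-H,-V)$ would force every difference entry into $\{0,\tfrac{n}{2}\}$, both of which are impossible for $n\geq 3$ --- is the one step with real content and is the piece the paper's omission leaves unjustified, so it is good that you supplied it.
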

As a final remark on this topic, note that $\ell(s, -H, V)$ is the distance conjugate of $\ell(s, H, -V)$, so one is well defined if and only if the other does. If one follows this rabbit hole, one will stumble upon the following important definition:

\begin{defn}
Let $d = (h_1, h_2, \dots, h_{n-1})$ and $d' = (h_1', h_2', \dots, h_{n-1}')$ be difference rows. The \textbf{row product} of $d$ and $d'$, denoted $\rowprod(d, d')$, is defined as $\ell(1, H, V)$, where $H$ is the $n\times(n-1)$ matrix with every row being $d$ and $V$ is the $(n-1)\times n$ matrix with every column being $d'$ as a column vector. 
\end{defn}
Row products are always well defined as Conditions 1-3 of Lemma \ref{H V conditions} easily hold. 
\begin{lemma}\label{row prod equivalent}
Let $L$ be an $n\times n$ Latin square, and $H,V$ its horizontal and vertical difference matrices. If rows $i$ and $i+1$ of $H$ are equal, then row $i$ of $V$ is constant. If columns $j$ and $j+1$ of $V$ are equal, then row $j$ of $H$ is constant. 

$L$ is a row product if and only if every row of $H$ is equal to the first row, which is true if and only if every column of $V$ is equal to the first column.
\end{lemma}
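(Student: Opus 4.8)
The entire argument runs through Condition~1 of Lemma~\ref{H V conditions}, which asserts that for every cell where the four entries are defined, i.e.\ $i\in[1,n-1]$ and $j\in[1,n-1]$,
\[
h_{i,j}+v_{i,j+1}\equiv v_{i,j}+h_{i+1,j}\pmod n,
\qquad\text{equivalently}\qquad
h_{i,j}-h_{i+1,j}\equiv v_{i,j}-v_{i,j+1}\pmod n .
\]
In words, the vertical gap between the two $H$-entries stacked in column $j$ always equals the horizontal gap between the two $V$-entries sitting side by side in row $i$; the two ``if--then'' implications are immediate specializations. I do not expect a genuine obstacle here: essentially all the content is carried by this one identity, and what remains is index bookkeeping and a small normalization remark.

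For the first implication, assume rows $i$ and $i+1$ of $H$ agree, so $h_{i,j}=h_{i+1,j}$ for all $j\in[1,n-1]$. The identity above then collapses to $v_{i,j}\equiv v_{i,j+1}$ for each $j\in[1,n-1]$, and chaining $j=1,\dots,n-1$ gives $v_{i,1}\equiv v_{i,2}\equiv\cdots\equiv v_{i,n}$; since all entries live in one fixed residue system modulo $n$, this says row $i$ of $V$ is literally constant. The second implication is the transpose: if columns $j$ and $j+1$ of $V$ agree, then $h_{i,j}\equiv h_{i+1,j}$ for all $i\in[1,n-1]$, so column $j$ of $H$ is constant.

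For the three-way equivalence I would argue around a cycle. If $L=\rowprod(d,d')$ then by definition every row of $H$ equals $d$ (in particular the first row) and every column of $V$ equals $d'$. Conversely, if every row of $H$ equals the first row then consecutive rows of $H$ coincide for every admissible $i$, so by the first implication every row of $V$ is constant; a matrix all of whose rows are constant has all columns equal, hence every column of $V$ equals the first. The mirror argument, via the second implication, shows that if every column of $V$ equals the first column then every row of $H$ equals the first row. Finally, assume both hold, with $d$ the common row of $H$ and $d'$ the common column of $V$: since $L$ is a Latin square its first row is a Latin row with difference row $d$ and its first column is a Latin column with difference row $d'$, so $\rowprod(d,d')$ is well defined and has the same difference matrices as $L$; after normalizing $m_{1,1}=1$ via Corollary~\ref{choice of symbol} (additions fix $H$ and $V$, so ``being a row product'' is a property of the whole addition class) we conclude $L=\rowprod(d,d')$. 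The only things needing care are keeping the index ranges legal when invoking Condition~1, the trivial observation that ``all rows constant'' and ``all columns equal'' coincide for $V$ (and dually for $H$), and the $m_{1,1}$ normalization just mentioned.
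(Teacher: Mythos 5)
Your proof is correct and follows essentially the same route as the paper: both hinge entirely on Condition~1 of Lemma~\ref{H V conditions} rewritten as $h_{i,j}-h_{i+1,j}\equiv v_{i,j}-v_{i,j+1}$, with the row-product equivalence then read off from the two implications. You simply spell out the chain of equivalences (and the $m_{1,1}=1$ normalization) that the paper compresses into ``follow immediately,'' which is a welcome bit of extra care but not a different argument.
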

\begin{proof}
For example by `row $i$ of $V$ is constant' we mean that every entry on row $i$ of $V$ is the same. Lemma \ref{H V conditions} Condition 1 gives that $h_{i+1, j} = h_{i,j}\iff v_{i, j} = v_{i, j+1}$, therefore if rows $i$ and $i+1$ of $H$ are equal everywhere, then row $i$ of $V$ has the constant entry $v_{i, 1}$, and if columns $j$ and $j+1$ of $V$ are equal everywhere, then column $j$ of $H$ has constant entry $h_{1, j}$. The equivalent statements to $L$ being a row product follow immediately.
\end{proof}

An important property of row products is that inner distance of $L$ only depends on the entries in $H,V$, which are exactly the values of $d, d'$. 

Looking back at Figure \ref{MiD of order 5} with this definition in mind, the pattern is more clear: there are exactly 2 normal rows of maximum inner distance for any odd $n$, specifically they are induced by $d = (\frac{n-1}{2}, \frac{n-1}{2}, \dots, \frac{n-1}{2})$ and its negative, $-d = (\frac{n+1}{2}, \frac{n+1}{2}, \dots, \frac{n+1}{2})$. The 4 classes of squares are thus the 4 combinations of row products of $d$ and $d'$.

\begin{lemma}\label{circulants cap products}
If $L = \ell(1, H, V)$ is circulant (or back-circulant), then $H$ and $V$ are both circulant (or back-circulant). $L$ is both a circulant (or back-circulant) and a row product if and only if every entry in $H$ is constant and every entry in $V$ is constant.
\end{lemma}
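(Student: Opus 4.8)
The plan is to prove the two claims in sequence, leaning on Lemma~\ref{H V conditions} Condition~1 and Lemma~\ref{row prod equivalent}. For the first claim, suppose $L = \ell(1,H,V)$ is circulant. By Theorem~\ref{circulants equal to cycles} (or directly from the definition), the first row $r = (s_1,\dots,s_n)$ determines $L$, and row $i$ is row $i-1$ shifted right by one, so $m_{i+1,j+1} = m_{i,j}$ (indices of columns read mod $n$, with a boundary wrap). Then $h_{i+1,j} \equiv m_{i+1,j+1} - m_{i+1,j} = m_{i,j} - m_{i,j-1} \equiv h_{i,j-1}$, which is exactly the statement that $H$ is circulant; similarly $v_{i,j+1} \equiv m_{i+1,j+1} - m_{i,j+1} = m_{i,j} - m_{i-1,j} \equiv v_{i-1,j}$, so $V$ is circulant. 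The boundary columns (where the shift wraps around) need a line of separate checking, but the relation $m_{i+1,1} = m_{i,n}$ handles it. The back-circulant case is identical with a left shift, $m_{i+1,j} = m_{i,j+1}$, giving $h_{i+1,j} \equiv h_{i,j+1}$ and $v_{i,j} \equiv v_{i-1,j+1}$.

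For the second claim, suppose first that every entry of $H$ equals some constant $h$ and every entry of $V$ equals some constant $v$. Then trivially every row of $H$ equals the first row, so by Lemma~\ref{row prod equivalent} $L$ is a row product; and $L$ is visibly circulant precisely when the shift-by-one relation holds, which one checks amounts to $h + v \equiv h$ is not what we want --- rather one computes directly that $m_{i,j} = 1 + (i-1)v + (j-1)h \pmod n$, and circulancy $m_{i+1,j+1} = m_{i,j}$ becomes $v + h \equiv 0 \pmod n$. Here is the subtlety I expect to be the main obstacle: a constant-$H$, constant-$V$ square need not be circulant for \emph{every} choice of $(h,v)$; it is circulant exactly when $h + v \equiv 0$, and back-circulant exactly when $h - v \equiv 0$, i.e.\ $h \equiv v$. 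So the ``if'' direction of the iff is not literally true as a bare implication --- the honest reading is that among row products, the circulant ones are exactly those with both matrices constant \emph{and} satisfying $h \equiv -v$; I will need to either state the lemma as ``if and only if'' under the standing hypothesis that $L$ is \emph{known} to be a circulant row product, or insert the relation $h+v\equiv 0$ explicitly. I will go with the former, cleaner reading, since the lemma's hypothesis ``$L = \ell(1,H,V)$ is circulant'' is already in force.

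Granting that reading, the ``only if'' direction is the real content. Assume $L$ is both circulant and a row product. By the first claim, $H$ is circulant; by Lemma~\ref{row prod equivalent}, every row of $H$ equals the first row. A circulant matrix all of whose rows are equal must have every row constant: if row $1$ of $H$ is $(h_1,\dots,h_{n-1})$ and row $2$ is its cyclic shift but also equals row $1$, then $h_j = h_{j-1}$ for all $j$ in range (wrapping appropriately), forcing $h_1 = h_2 = \cdots = h_{n-1}$. Hence $H$ is constant. Dually, $V$ is circulant (first claim) and has every column equal to the first column (Lemma~\ref{row prod equivalent}, second equivalence), and the same cyclic argument forces $V$ constant. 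The back-circulant case runs verbatim with the left-shift relations from the first claim. I would present the first claim as a short index computation, then the second as this two-line ``circulant $+$ all rows equal $\Rightarrow$ constant'' observation applied twice; the only place to be careful is the wrap-around column of a circulant, which I will dispatch once at the start and then suppress.
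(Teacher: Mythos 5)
Your proof is correct and takes essentially the same route as the paper: an index computation for the first claim, and for the second the observation that the circulant shift relation on $H$ (resp.\ $V$) combined with the row-product condition that all rows of $H$ (resp.\ columns of $V$) coincide forces every entry to be equal. Your worry about the ``if'' direction is well founded but does not put you at odds with the paper --- the paper's own proof also establishes only the ``only if'' direction, and your reading (the equivalence holds under the standing hypothesis that $L$ is circulant or back-circulant) is the sensible one; note that in the only place the converse is implicitly used, Corollary~\ref{MiD circulants cap products}, the constant values are $\frac{n}{2}\pm 1$, for which the relations $h+v\equiv 0$ or $h\equiv v$ you identify do hold, so nothing downstream breaks.
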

\begin{proof}
For a circulant Latin square $L$, $m_{i,j}= m_{i-1, j-1}\implies h_{i,j}= h_{i-1, j-1}$ and $v_{i,j} = v_{i-1,j-1}$. So that $L$ is circulant if and only if both $H$ and $V$ are. A similar statement holds for back-circulants.

If $L$ is a circulant, the main diagonal of $H$ contains one constant value, $h_{1,1}$. If $L$ is a row product, then column $j$ of $H$ contains one constant value, $h_{1, j}$. Therefore $L$ is both circulant and a row product then every entry in $H$ is the same value, $h_{1,1}$. The same argument shows $V$ has every entry being $v_{1,1}$.
\end{proof}

\section{Enumerating Squares of Maximum Inner Distance}\label{even section}
In this section assume that $n\geq 6$ is an even value. We now lead to the main result of the paper. The first step is to count how many Hamiltonian cycles and Hamiltonian paths there are in the maximal distance graph. If there are $P$ paths and $C$ cycles, then we can construct $P^2$ Latin squares as products of rows and $2C$ circulant and back-circulant Latin squares. Lemma \ref{circulants cap products} governs how these two sets overlap. We then try to prove that \textit{every} Latin square of maximum inner distance is of one of these two forms. Both steps of this process require a great understanding of the $P$ Hamiltonian paths, so we completely classify them.

\subsection{Extended Difference Rows}
\begin{defn}
Let $r = (s_1, s_2, \dots, s_n)$ be a row and let $d = (h_1, h_2, \dots, h_{n-1})$ be its difference row. The \textbf{extended difference row} $d_*$ of $r$ is the vector $d_* = (\epsilon_1, \epsilon_2, \dots, \epsilon_{n-1}, h)$, where $\epsilon_j = h_{j} - \frac{n}{2}$ and $h = h_n - \frac{n}{2}$, where $h_n \equiv s_1 - s_n \pmod{n}$.
\end{defn}

Note that the $\epsilon_j$ terms are not taken mod $n$, so they may be negative. It should be clear that normal rows, difference rows, and extended difference rows all induce each other uniquely. The graph interpretation of $d_*$ is as follows: if the inner distance is $\frac{n}{2} - r$, there are $2r+1$ edges, labeled $-r, -r+1, \dots, r-1, r$, attached to each vertex $x$. The edge $\epsilon$ takes $x$ to $x + \frac{n}{2} + \epsilon$ for each $\epsilon\in [-r, r]$. But unlike difference rows, the values of $\epsilon$ stay small with large inner distance. Below are three examples of rows, as well as their difference and extended difference rows to illustrate this.

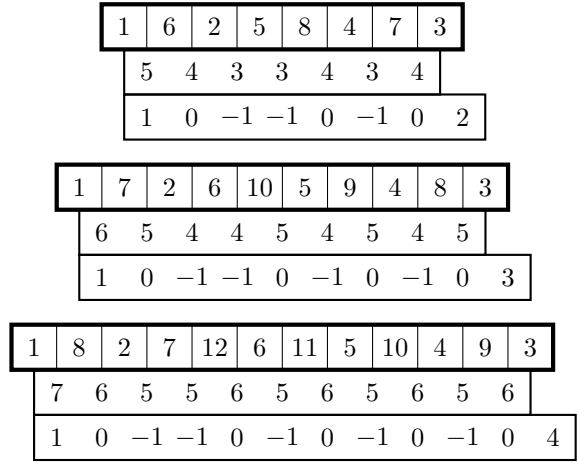
\begin{figure}[H]
    \centering
    \begin{tikzpicture}[scale=.6]
    \draw(0,2)grid(8,3); 
    \draw[step=8,ultra thick](0,2)rectangle(8,3);
    \foreach\x[count=\i] in{1, 6, 2, 5, 8, 4, 7, 3}{\node at(\i-0.5,2.5){$\x$};};
    
    \draw[step=7, thick](0.5,1)rectangle(7.5,2);
    \foreach\x[count=\i] in{5, 4, 3, 3, 4, 3, 4}{\node at(\i,1.5){$\x$};};
    
    \draw[step=8, thick](0.5,0)rectangle(8.5,1);
    \foreach\x[count=\i] in{1, 0, -1, -1, 0, -1, 0, 2}{\node at(\i,0.5){$\x$};};
    \end{tikzpicture}\\\vspace{0.1in}
    
    \begin{tikzpicture}[scale=.6]
    \draw(0,2)grid(10,3); 
    \draw[step=10,ultra thick](0,2)rectangle(10,3);
    \foreach\x[count=\i] in{1, 7, 2, 6, 10, 5, 9, 4, 8, 3}{\node at(\i-0.5,2.5){$\x$};};
    
    \draw[step=9, thick](0.5,1)rectangle(9.5,2);
    \foreach\x[count=\i] in{6, 5, 4, 4, 5, 4, 5, 4, 5}{\node at(\i,1.5){$\x$};};
    
    \draw[step=10, thick](0.5,0)rectangle(10.5,1);
    \foreach\x[count=\i] in{1, 0, -1, -1, 0, -1, 0, -1, 0, 3}{\node at(\i,0.5){$\x$};};
    \end{tikzpicture}\\\vspace{0.1in}
    
    \begin{tikzpicture}[scale=.6]
    \draw(0,2)grid(12,3); 
    \draw[step=12,ultra thick](0,2)rectangle(12,3);
    \foreach\x[count=\i] in{1, 8, 2, 7, 12, 6, 11, 5, 10, 4, 9, 3}{\node at(\i-0.5,2.5){$\x$};};
    
    \draw[step=11, thick](0.5,1)rectangle(11.5,2);
    \foreach\x[count=\i] in{7, 6, 5, 5, 6, 5, 6, 5, 6, 5, 6}{\node at(\i,1.5){$\x$};};
    
    \draw[step=12, thick](0.5,0)rectangle(12.5,1);
    \foreach\x[count=\i] in{1, 0, -1, -1, 0, -1, 0, -1, 0, -1, 0, 4}{\node at(\i,0.5){$\x$};};
    \end{tikzpicture}\\\vspace{0.1in}
    
    \caption{Three triples of a row $r$ (top row) followed by its difference row $d$ (middle row) and extended difference row $d_*$ (bottom row).}
    \label{row, diff row, ext diff row}
\end{figure}
The entries in $d$ and $d_*$ are aligned with the space between symbols in $r$, as they represent differences between these symbols. As $n$ increases, entries in $d$ increase, but the values in $d_*$ stay fixed in magnitude. Figure \ref{row, diff row, ext diff row} highlights several important properties which we leave to the reader to verify.

\begin{lemma}\label{diff row properties}
Let $r, d, d_*$ denote a normal row of inner distance $k$, its induced difference row, and its induced extended difference row, using the notation from the definition. 
\begin{enumerate}
    \item $h + \sum_{j=1}^n \epsilon_j \equiv 0 \pmod{n}$
    \item For each $j\in [1, n-1]$, $|\epsilon_j| \leq \frac{n}{2} - k$. The inner distance $k$ equals $\min\{\dist(h_j, 0)\}_{j=1}^{n-1} = \frac{n}{2} - \max\{|\epsilon_j|\}_{j=1}^{n-1}$.
    \item The symbol $h$ determines the final entry in $r$. $r$ is a Hamiltonian cycle if and only if $|h|\leq k$.
\end{enumerate}
\end{lemma}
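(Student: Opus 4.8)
The plan is to translate each of the three claims into a statement about sums of the symbols $s_1, \dots, s_n$ and their modular differences, and then read off the conclusion. For Part 1, note that $h + \sum_{j=1}^{n} \epsilon_j$ is, by definition, $\big(h_n - \tfrac n2\big) + \sum_{j=1}^{n-1}\big(h_j - \tfrac n2\big)$; but wait, the sum runs to $n$, so I should be careful — the intended reading is $\sum_{j=1}^{n-1}\epsilon_j$ together with the $h$ term, or equivalently we set $\epsilon_n = h$. Either way, $h + \sum_{j=1}^{n-1}\epsilon_j \equiv \sum_{j=1}^{n} h_j - n\cdot\tfrac n2 \equiv \sum_{j=1}^{n} h_j \pmod n$ since $n \cdot \tfrac n2 \equiv 0$. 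Now $\sum_{j=1}^{n} h_j \equiv \sum_{j=1}^{n-1}(s_{j+1}-s_j) + (s_1 - s_n) \equiv 0$, a telescoping sum. This gives Part 1.

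For Part 2, I would use the characterization of inner distance as the minimum over adjacent cells of $\dist$. In a normal row, the adjacent pairs are exactly $(s_j, s_{j+1})$, and $\dist(s_j, s_{j+1}) = \min\{h_j,\ n - h_j\}$ where $h_j \in [0, n-1]$. Saying this distance is at least $k$ for every $j$ is equivalent to $k \le h_j \le n-k$, i.e. $|h_j - \tfrac n2| \le \tfrac n2 - k$, i.e. $|\epsilon_j| \le \tfrac n2 - k$. The displayed identities for $k$ follow: $k = \min_j \dist(h_j, 0) = \min_j \min\{h_j, n-h_j\} = \min_j (\tfrac n2 - |\epsilon_j|) = \tfrac n2 - \max_j |\epsilon_j|$. (One small point to address: the inner distance is the minimum over adjacent cells, so $k = \min_j \dist(s_j,s_{j+1})$ exactly because a $1\times n$ row has no other adjacencies; this is where the "normal row" / "Latin row" hypothesis enters, guaranteeing all $h_j \ne 0$ so the distances are genuinely in $[1, \tfrac n2]$.)

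For Part 3, recall (from Theorem \ref{circulants equal to cycles} and the surrounding discussion) that a Latin row $r = (s_1,\dots,s_n)$ extends to a Hamiltonian cycle on the distance $k$ graph precisely when the final "wrap-around" pair $s_n, s_1$ is also at distance at least $k$. By definition $h_n \equiv s_1 - s_n$ and $h = h_n - \tfrac n2$, so exactly as in Part 2, $\dist(s_n, s_1) = \tfrac n2 - |h| \ge k$ iff $|h| \le \tfrac n2 - k$. But here the relevant threshold is $\tfrac n2 - k$, not $k$ — I should double-check the statement. For the maximum inner distance case $k = \tfrac n2 - 1$ we have $\tfrac n2 - k = 1$, and the claim "$r$ is a Hamiltonian cycle iff $|h| \le k$" would instead read $|h| \le 1$; so I believe the lemma as stated is implicitly specialized to, or should be read in the context of, maximum inner distance where $k = \tfrac n2 - 1$ makes $\tfrac n2 - k = 1 \le k$ consistent only after noticing $|h|\le \tfrac n2 - k$ is the true condition and it happens that we also want $|h|\le k$ to even have a valid symbol. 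I will state the clean equivalence $\dist(s_n,s_1) = \tfrac n2 - |h|$ and then note that $r$ closes to a cycle iff this is $\ge k$, reconciling with the stated $|h| \le k$ in the maximum-distance regime that Section \ref{even section} has fixed.

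The main obstacle is bookkeeping around the two conventions ($\equiv$ versus $=$) and making sure the $\tfrac n2$ shifts land correctly — in particular verifying that $\dist(a,b) = \tfrac n2 - |h_{a\to b} - \tfrac n2|$ holds for the representative $h \in [0,n-1]$, and pinning down the exact threshold in Part 3 so that it is consistent with the maximum inner distance normalization in force throughout Section \ref{even section}. None of the three parts requires more than the telescoping identity and the elementary rewriting of $\dist$ in terms of $\epsilon$; the content is entirely in setting up the correct translation.
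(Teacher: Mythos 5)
The paper offers no proof of this lemma at all --- it is stated right after the remark that Figure \ref{row, diff row, ext diff row} ``highlights several important properties which we leave to the reader to verify'' --- so the only things to check your proposal against are the statement itself and how it is used later. Your verification is correct and is clearly the intended one: Part 1 is the telescoping identity $\sum_{j=1}^{n} h_j \equiv 0$ combined with $n\cdot\frac{n}{2}\equiv 0$ for even $n$ (and you read the summation limit correctly; with the sum taken literally to $j=n$ and $\epsilon_n=h$ the claim would be false, e.g.\ for $r=(1,4,2,5,3,6)$ one gets $-2+2+(-2)\not\equiv 0$), and Part 2 is the pointwise identity $\dist(s_j,s_{j+1})=\min\{h_j,\,n-h_j\}=\frac{n}{2}-|\epsilon_j|$ applied to the $n-1$ adjacencies of a $1\times n$ row. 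Your suspicion about Part 3 is also well founded: the closing edge lies in the distance-$k$ graph iff $\dist(s_n,s_1)=\frac{n}{2}-|h|\geq k$, i.e.\ $|h|\leq\frac{n}{2}-k$, and the paper's own later usage confirms this is the right threshold (at maximum inner distance $k=\frac{n}{2}-1$, the cycles of Theorem \ref{classification of cycles} all have $h\in\{0,\pm1\}$, and the remark after Theorem \ref{classification of paths} identifies the cycles as those with $h=\pm1$ or $h=0$), so the printed ``$|h|\leq k$'' is a slip for ``$|h|\leq\frac{n}{2}-k$.'' The one thing to tighten is your attempted reconciliation at the end: there is nothing to reconcile, since even in the maximum-distance regime $|h|\leq k$ and $|h|\leq\frac{n}{2}-k$ are genuinely different conditions; simply prove the corrected inequality, as your argument in fact does.
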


The following algorithm brute force counts $P(n)$, the number of normal rows of length $n$ (we use the example $n=6$): Place a 1 in the left-most entry of a vector of length 6. Following 1 is a 3, 4, or 5. If we place a 3, the next symbol can be 5, 6, but not 1 since that was in the previous entry. We repeat this process and check every possibility until we obtain a complete list, which has length 10. This same algorithm works for lower inner distances too, we need only compute the `neighbors mod $n$' of each symbol $x$. The same computations are easy (though a bit time consuming by hand) for $n= 8, 10$, and maybe stop at 12:
\begin{align*}\label{oeis sequences}
    &P(4k+2) = 10, 26, 50, 82, 122, \dots\qquad & \text{for $k = 1, 2, 3, \dots$}\\
    &P(4k) = 6, 18, 38, 66, 102, \dots & \text{for $k = 1, 2, 3,\dots$}\\
    &P(2k) = 6, 10, 18, 26, 38, \dots & \text{for $k = 2, 3, 4, \dots$}
\end{align*}
These are sequences A069894, A005899, and A248800 respectively (top to bottom) on the OEIS\cite{oeis 4k+2}\cite{oeis 4k}\cite{oeis evens}. We prove these sequences hold in general for large $n$, not by counting the rows $r$ themselves, but by counting the possible extended difference rows!

\begin{figure}[H]
    \centering
    \begin{tikzpicture}[scale=.7]
    \draw(0,0)grid(6,10); 
    \foreach\x[count=\i] in{1, 3, 5, 2, 4, 6}{\node at(\i-0.5,9.5){$\x$};};
    \foreach\x[count=\i] in{1, 3, 5, 2, 6, 4}{\node at(\i-0.5,8.5){$\x$};};
    \foreach\x[count=\i] in{1, 3, 6, 4, 2, 5}{\node at(\i-0.5,7.5){$\x$};};
    \foreach\x[count=\i] in{1, 4, 6, 2, 5, 3}{\node at(\i-0.5,6.5){$\x$};};
    \foreach\x[count=\i] in{1, 4, 6, 3, 5, 2}{\node at(\i-0.5,5.5){$\x$};};
    \foreach\x[count=\i] in{1, 4, 2, 5, 3, 6}{\node at(\i-0.5,4.5){$\x$};};
    \foreach\x[count=\i] in{1, 4, 2, 6, 3, 5}{\node at(\i-0.5,3.5){$\x$};};
    \foreach\x[count=\i] in{1, 5, 2, 4, 6, 3}{\node at(\i-0.5,2.5){$\x$};};
    \foreach\x[count=\i] in{1, 5, 3, 6, 2, 4}{\node at(\i-0.5,1.5){$\x$};};
    \foreach\x[count=\i] in{1, 5, 3, 6, 4, 2}{\node at(\i-0.5,0.5){$\x$};};
    \end{tikzpicture}
    \qquad
    \begin{tikzpicture}[scale=.7]
    \draw(0,0)grid(6,10); 
    \foreach\x[count=\i] in{-1, -1, 0, -1, -1, -2}{\node at(\i-0.5,9.5){$\x$};};
    \foreach\x[count=\i] in{-1, -1, 0, 1, 1, 0}{\node at(\i-0.5,8.5){$\x$};};
    \foreach\x[count=\i] in{-1, 0, 1, 1, 0, -1}{\node at(\i-0.5,7.5){$\x$};};
    \foreach\x[count=\i] in{0, -1, -1, 0, 1, 1}{\node at(\i-0.5,6.5){$\x$};};
    \foreach\x[count=\i] in{0, -1, 0, -1, 0, 2}{\node at(\i-0.5,5.5){$\x$};};
    \foreach\x[count=\i] in{0, 1, 0, 1, 0, -2}{\node at(\i-0.5,4.5){$\x$};};
    \foreach\x[count=\i] in{0, 1, 1, 0, -1, -1}{\node at(\i-0.5,3.5){$\x$};};
    \foreach\x[count=\i] in{1, 0, -1, -1, 0, -1}{\node at(\i-0.5,2.5){$\x$};};
    \foreach\x[count=\i] in{1, 1, 0, -1, -1, 0}{\node at(\i-0.5,1.5){$\x$};};
    \foreach\x[count=\i] in{1, 1, 0, 1, 1, 2}{\node at(\i-0.5,0.5){$\x$};};
    \end{tikzpicture}
    \caption{All 10 maximum inner distance rows (left) and their extended difference rows (right) for $n=6$.} 
    \label{n=6 ext diff row examples}
\end{figure}

\subsection{Counting Hamiltonian Cycles of Maximum Inner Distance}
\begin{defn}
Let $r$ be a cyclic row of inner distance $k$, meaning it represents a Hamiltonian cycle on the distance $k$ graph. Let $d_* = (\epsilon_1, \epsilon_2, \dots, \epsilon_n)$ be its extended difference row, where $\epsilon_n = h$. The extended difference row $(\epsilon_n, \epsilon_1, \dots, \epsilon_{n-1})$ defined by shifting its entries right by 1 is called the rotation by 1 of $d_*$. The rotations by $1, 2, \dots, n$ are defined similarly, and are called the \textbf{rotations} of $d_*$.
\end{defn}
This is not to be confused with the symmetry of actually rotating the Hamiltonian path, which corresponds to addition of a constant value to the entries of $r$. Rotations of $d_*$ correspond to the symmetry of `pushing the edges through the vertices,' shown below.

\begin{figure}[H]
    \centering
    \begin{tikzpicture}[scale=0.6]
        \draw(0,1)grid(6,2); 
    \draw[step=6,ultra thick](0,1)rectangle(6,2);
    \foreach\x[count=\i] in{1, 1, 0, -1, -1, 0}{\node at(\i-0.5,1.5){$\x$};};
    \node at(0, 0) {$~$};
    \end{tikzpicture}
    \quad
    \begin{tikzpicture}[thick, main/.style = {draw, circle}] 
        \node[main] (1) {$s_1$};
        \node[main] (2) [above right of=1] {$s_2$};
        \node[main] (3) [right of=2] {$s_3$}; 
        \node[main] (4) [below right of=3] {$s_4$};
        \node[main] (5) [below left of=4] {$s_5$}; 
        \node[main] (6) [left of=5] {$s_6$};
        \draw[-{Stealth}] (1) -> (5);
        \draw[-{Stealth}] (5) -> (3);
        \draw[-{Stealth}] (3) -- (6);
        \draw[-{Stealth}] (6) -- (2);
        \draw[-{Stealth}] (2) -- (4);
        \draw[dotted, -{Stealth}] (4) --  (1);
    \end{tikzpicture}\\\vspace{0.1in}
    \begin{tikzpicture}[scale=0.6]
        \draw(0,1)grid(6,2); 
    \draw[step=6,ultra thick](0,1)rectangle(6,2);
    \foreach\x[count=\i] in{0, 1, 1, 0, -1, -1}{\node at(\i-0.5,1.5){$\x$};};
    \node at(0, 0) {$~$};
    \end{tikzpicture}
    \quad
    \begin{tikzpicture}[thick, main/.style = {draw, circle}] 
        \node[main] (1) {$s_1$};
        \node[main] (2) [above right of=1] {$s_2$};
        \node[main] (3) [right of=2] {$s_3$}; 
        \node[main] (4) [below right of=3] {$s_4$};
        \node[main] (5) [below left of=4] {$s_5$}; 
        \node[main] (6) [left of=5] {$s_6$};
        \draw[-{Stealth}] (1) -- (4);
        \draw[-{Stealth}] (4) -- (2);
        \draw[-{Stealth}] (2) -- (6);
        \draw[-{Stealth}] (6) -- (3);
        \draw[-{Stealth}] (3) -- (5);
        \draw[-{Stealth}, dotted] (5) -- (1);
    \end{tikzpicture}
    \caption{Rotating a difference row corresponds to re-labeling the vertices by the addition $s_n\to s_1$ and taking the path that originally lead to $s_1$ as the start of the new cycle.}
    \label{rotation symmetry}
\end{figure}
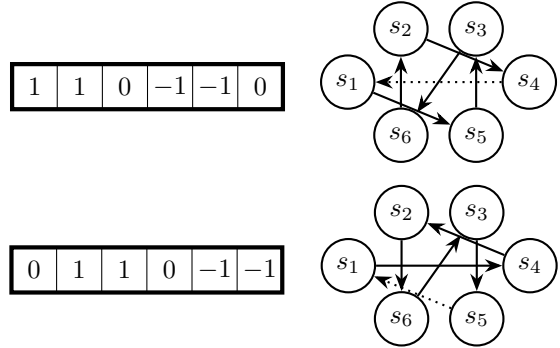
We now count how many Hamiltonian cycles there are on the maximal distance graph by counting extended difference rows. To do so, we study the patterns of 0's, 1's, and -1's within them.

\begin{lemma}\label{disallowed patterns}
Let $d_* = (\epsilon_1, \epsilon_2, \dots, \epsilon_{n-1}, h)$ be an extended difference row of maximum inner distance. The following patterns never show up as subsequences:
\begin{enumerate}
    \item Coming back through the same edge, meaning $(0,0)$, $(1, -1)$, and $(-1, 1)$ are never allowed.
    
    \item Two zeroes bounding $m$ 1's, with $m'$ -1's to either side, \underline{unless} $m' < m$ OR $m=m'=\frac{n}{2}-1$: 
    \[(\underbrace{-1, \dots, -1}_{m_1'}, 0, \underbrace{1, \dots, 1}_{m}, 0, \underbrace{-1, \dots, -1}_{m_2'}) \implies m > m' = m_1' + m_2'\]
    
    \item $q$ sequences of 1's with $q-1$ 0's in between, containing at least $\frac{n}{2}$ total 1's, \underline{unless} (1) $n = 4k$ and $q$ is odd, or (2) $n=4k+2$ and $q$ is even.
    \[(\underbrace{1, \dots, 1}_{m_1}, 0, \underbrace{1, \dots, 1}_{m_2}, 0, \dots, 0, \underbrace{1, \dots, 1}_{m_q}) \qquad \text{Where $\sum_{i=1}^q m_i = \frac{n}{2}$.}\]
    Further, if a zero is attached to the front or back, the sequence is never allowed:
    \[(0, \underbrace{1, \dots, 1}_{m_1}, 0, \underbrace{1, \dots, 1}_{m_2}, 0, \dots, 0, \underbrace{1, \dots, 1}_{m_q}) \qquad (\underbrace{1, \dots, 1}_{m_1}, 0, \underbrace{1, \dots, 1}_{m_2}, 0, \dots, 0, \underbrace{1, \dots, 1}_{m_q}, 0)\]
\end{enumerate}
\end{lemma}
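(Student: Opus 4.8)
Everything reduces to one elementary fact about sums of consecutive entries of $d_*$. Since the inner distance is maximal, $k=\tfrac n2-1$, so Lemma \ref{diff row properties}(2) forces $\epsilon_j\in\{-1,0,1\}$ for every $j$; this is why only blocks of $0$'s and $\pm1$'s can occur. For a window $W$ of $\ell$ consecutive entries of $d_*$ (read linearly among $\epsilon_1,\dots,\epsilon_{n-1}$ for a Hamiltonian path, and cyclically when $r$ is a Hamiltonian cycle), set $S(W)=\sum_{j\in W}\epsilon_j\in\Z$; then the corresponding sum of $h$-values equals $S(W)$ mod $n$ if $\ell$ is even and $S(W)+\tfrac n2$ mod $n$ if $\ell$ is odd. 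Because $|S(W)|\le\ell\le n-1$, the distinctness condition — Condition 0 of Lemma \ref{H V conditions} together with its cyclic analogue — says precisely: for all $2\le\ell\le n-1$, if $\ell$ is even then $S(W)\neq0$, and if $\ell$ is odd then $S(W)\notin\{-\tfrac n2,\tfrac n2\}$. Part 1 is now immediate, since $(0,0)$, $(1,-1)$, $(-1,1)$ are length-$2$ (even) windows with $S(W)=0$.

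For Part 2 note first that $m\ge1$, else the pattern contains $(0,0)$. The crucial observation is that every sub-window of $(-1^{m_1'},0,1^m,0,-1^{m_2'})$ containing \emph{both} zeros has the form $W_{a,c}=(-1^{a},0,1^{m},0,-1^{c})$ with $0\le a\le m_1'$ and $0\le c\le m_2'$: one cannot drop any of the $m$ central $1$'s without also dropping a zero. Here $S(W_{a,c})=m-(a+c)$ and $\ell=m+a+c+2$. If $m\le m':=m_1'+m_2'$ we may choose $a+c=m$, producing an even window of length $2m+2$ with $S=0$, which is forbidden whenever $2m+2\le n-1$, i.e.\ $m\le\tfrac n2-2$. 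The remaining possibility, $m\le m'$ with $m\ge\tfrac n2-1$, is disposed of by a counting bound: the pattern sits inside $d_*$, so $m'+m+2\le n$, giving $m'\le n-2-m\le\tfrac n2-1$; combined with $m'\ge m\ge\tfrac n2-1$ this forces $m=m'=\tfrac n2-1$. Hence, whenever the pattern occurs, either $m>m'$ or $m=m'=\tfrac n2-1$, which is the claim. (A direct check confirms the exceptional word $1^{\,n/2-1}\,0\,(-1)^{\,n/2-1}\,0$, read cyclically, really is free of forbidden sub-windows, so the exception is not vacuous.)

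Part 3 is a modular computation. The pattern $P=(1^{m_1},0,\dots,0,1^{m_q})$ with $\sum m_i=\tfrac n2$ has $\tfrac n2$ ones and $q-1$ zeros, so the sum of its $h$-values is $\tfrac n2(\tfrac n2+1)+(q-1)\tfrac n2=\tfrac n2(\tfrac n2+q)$, and its length is $\ell=\tfrac n2+q-1\le n-1$. Using $\tfrac{n^2}{4}\equiv0\pmod n$ when $4\mid n$ and $\tfrac{n^2}{4}\equiv\tfrac n2\pmod n$ when $n\equiv2\pmod4$, together with $\tfrac{nq}{2}\equiv0$ or $\tfrac n2$ according as $q$ is even or odd, one finds this $h$-sum is $\equiv0\pmod n$ exactly when $(4\mid n,\ q\text{ even})$ or $(n\equiv2,\ q\text{ odd})$; since $\ell\le n-1$, the pattern is forbidden in precisely those cases, which is the stated dichotomy. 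For the ``zero attached'' variants, appending one $0$ changes the $h$-sum to $\tfrac n2(\tfrac n2+q+1)$ and the length to $\tfrac n2+q$: in the two cases where $P$ was allowed, $\tfrac n2+q+1$ is even so the new $h$-sum is $\equiv0\pmod n$, while $q<\tfrac n2$ keeps the length $\le n-1$, so the variant is forbidden; in the other two cases it already contains the forbidden $P$.

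The only genuinely delicate point is the bookkeeping of window length against $n-1$: the ``unless'' clauses in Parts 2 and 3 are exactly the situations in which the natural bad sub-window is pushed out to length $n$, so that the pattern is forced to fill essentially all of $d_*$ and $r$ must be a cycle. The residue computation $\tfrac{n^2}{4}\bmod n$ splitting on $n\bmod4$ is the engine behind the parity conditions in Part 3, but it is routine once the window criterion is in place.
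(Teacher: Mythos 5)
Your proposal is correct and uses essentially the same engine as the paper: reduce each rule to the statement that a partial sum of consecutive horizontal differences cannot vanish mod $n$ unless the window is the whole row, then compute those sums from the counts of $0$'s and $\pm1$'s; your explicit ``window criterion'' (even windows need $S\neq 0$, odd windows need $S\neq\pm\frac n2$) is just a cleaner packaging of this, and your counting bound in Part 2 replaces the paper's appeal to Lemma \ref{diff row properties} for the exceptional case $m=m'=\frac n2-1$. One point worth noting: your Part 3 sum $\frac n2\bigl(\frac n2+q\bigr)$ is the correct value, whereas the paper's printed computation simplifies the same expression to $\frac n2\bigl(\frac n2+q-1\bigr)$ --- an algebra slip which, read literally, would flip the parity conditions and contradict the lemma's own statement; your version is consistent with the statement and with the examples elsewhere in the paper (e.g.\ the all-ones row existing only for $n=4k$).
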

\begin{proof}
We will call these Rules 1-3. Rule 1 is obvious: $\epsilon_j$ correspond to taking the edge that `adds' $\frac{n}{2} + \epsilon_j$. Therefore, two zeroes in a row, or a 1 following a -1 (or vice versa) correspond to travelling back the same path. Rules 2 and 3 are a generalization: the patterns shown amount to a net difference of 0 between the start and the end of the subsequence, which is only possible if the subsequence is the whole row.

Rule 2: Suppose that $m'$ -1's surround two zeroes and $m$ 1's. If $m'\geq m$, then take a sum of horizontal differences $h_j$ between two indices $j_1, j_2$ that contain $m$ many -1's. This amounts to a difference of $m(\frac{n}{2} - 1) + 2\frac{n}{2} + m(\frac{n}{2} + 1) \equiv 0$, computed by adding the number of -1's, 0's, and 1's. By Lemma \ref{diff row properties}, this is only true if $j_1 = 1$ and $j_2 = n$, so then $m = m' = \frac{n}{2} - 1$. 

Rule 3: Suppose a subsequence has $\frac{n}{2}$ 1's with $q-1$ zeroes between them. The partial sum of $h_j$ terms corresponding to this set of indices depends only on the parity of $\frac{n}{2}$ and $q$:
\[\sum h_{j} \equiv \frac{n}{2}\left(\frac{n}{2} + 1\right) + (q-1)\frac{n}{2} = \frac{n}{2}\left(\frac{n}{2} + q -1\right)\]
This evaluates to zero when $(\frac{n}{2} + q -1)$ is even, which is true if $n =4k$ and $q$ is odd or if $n = 4k+2$ and $q$ even. Adding a 0 to the outside of this pattern means adding an extra $\frac{n}{2}$ to the partial sum, inducing a difference of zero in the remaining cases, so such a pattern always fails. 
\end{proof}
Now that we have some basic tools to rule out numerous patterns, we can prove the first result:

\begin{thm}\label{classification of cycles}
Suppose $d_*$ is an extended difference row corresponding to a cycle. Then $d_*$ is a rotation of:
\[(\underbrace{1, \dots, 1}_{\frac{n}{2}-1}, 0, \underbrace{-1, \dots, -1}_{\frac{n}{2}-1}, 0)\]
or $d_*=\pm(1, 1, \dots, 1)$. If $n = 4k$, there are $n+2$ (normal) Hamiltonian cycles on the maximum distance graph. If $n = 4k+2$, there are $n$ (normal) Hamiltonian cycles on the maximum distance graph.
\end{thm}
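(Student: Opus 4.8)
The plan is to recast a normal Hamiltonian cycle of maximum inner distance as a closed lattice path, and then let the forbidden patterns of Lemma~\ref{disallowed patterns} squeeze that path into one of two shapes. By Lemma~\ref{diff row properties}(2) every $\epsilon_j$ of an extended difference row of maximum inner distance lies in $\{-1,0,1\}$; for a \emph{cycle} the closing entry $h$ must likewise realize a distance $\ge\frac{n}{2}-1$, so $h\in\{-1,0,1\}$ too. Hence, writing $\epsilon_n:=h$, a normal Hamiltonian cycle is exactly a cyclic word $d_*=(\epsilon_1,\dots,\epsilon_n)\in\{-1,0,1\}^n$ satisfying the difference-row condition of Lemma~\ref{H V conditions}(0) read cyclically (equivalently, the $n$ running vertices are distinct). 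Define the balance path $b(0)=0$, $b(i)=\epsilon_1+\dots+\epsilon_i$; its flat steps are precisely the zero entries, so $t$ (the number of ``arcs'') is the number of flat steps, and the $h$-sum over a window $\{i+1,\dots,j\}$ is $(j-i)\frac{n}{2}+(b(j)-b(i))$. Thus a proper window is forbidden exactly when $b(i)=b(j)$ with $j-i$ even, or $|b(i)-b(j)|=\frac{n}{2}$ with $j-i$ odd. Using the rotation symmetry (Figure~\ref{rotation symmetry}) and the fact that $d_*\mapsto-d_*$ preserves validity, it suffices to classify $d_*$ up to rotation and up to sign.

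\textbf{Core estimate.} First I would extract from Rules~1 and~2 of Lemma~\ref{disallowed patterns} the multiplicity bound: each level set $B_v=\{i:b(i)=v\}$ has $|B_v|\le 2$ for $v\ne0$, and $|B_0|\le 3$. (Any two points of $B_v$ delimit a balanced window, whose length must be odd lest its $h$-sum vanish mod $n$; if $B_v$ contained three points not equal to the pair $\{0,n\}$, its two extreme points would delimit a balanced window of even length.) If $d_*$ is constant this says nothing, and one checks directly that $(0,\dots,0)$ is never a valid difference row, while $(1,\dots,1)$ (and hence its negation $(-1,\dots,-1)$) is valid iff $\gcd(\tfrac{n}{2}+1,n)=1$, i.e.\ iff $n\equiv0\pmod4$ — producing the $d_*=\pm(1,\dots,1)$ cycles in exactly that case. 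Otherwise $d_*$ has a flat step; after rotating so that $\epsilon_n=0$ we get $b(n-1)=b(n)=b(0)=0$, forcing $|B_0|=3$ and making $b$ nonzero — hence, after possibly negating, strictly positive — on $(0,n-1)$. An induction ``from both ends'' now pins $b$ down: the two forced endpoint values $b(1)=1$, $b(n-2)=1$, together with $|B_1|\le2$, push $b\ge2$ on the interior, then $b(2)=2=b(n-3)$ and $|B_2|\le2$ push $b\ge3$, and so on until the ends meet, giving $b(i)=i$ for $i\le\frac{n}{2}-1$, $b(\frac{n}{2})=\frac{n}{2}-1$, and a symmetric descent. This is precisely the word displayed in the theorem, with $t=2$; a final check confirms it is valid, so all $n$ of its rotations are valid.

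\textbf{Counting.} Those $n$ rotations are pairwise distinct because the unique maximal run of $1$'s in the word has length $\frac{n}{2}-1\ge1$, so its rotational stabilizer is trivial; combining this with the $\gcd$ computation for the constant words gives $n$ cycles when $n\equiv2\pmod4$ and $n+2$ when $n\equiv0\pmod4$.

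\textbf{Where the care goes.} I expect the delicate point to be the arc case, where one must resist concluding $t=0$ from the multiplicity bound: the value $0$ genuinely occurs with multiplicity three, and it is exactly that third flat step, together with the one at the apex of the ``triangle,'' that yields $t=2$ and the canonical word. Getting the two base cases of the end-to-end induction correct, and keeping straight the modular bookkeeping that distinguishes a proper window from the whole word (and that rules out the $|b(i)-b(j)|=\frac{n}{2}$ alternative in the arc case since there $b$ stays below $\frac{n}{2}$), is where the real attention is required; the remaining steps are routine.
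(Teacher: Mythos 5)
Your proposal is correct, but it takes a genuinely different route from the paper. The paper works directly with the pre-packaged forbidden patterns of Lemma~\ref{disallowed patterns}: it first disposes of the zero-free case via Rules 1 and 3, then rotates/negates so that $d_*$ begins $(0,1,\dots)$, rules out the all-$\{0,1\}$ case (it would force the alternating path, which has $h=\frac{n}{2}-1$ and is not a cycle), and finally locates a block $(0,1,\dots,1,0,-1,\dots,-1)$ whose two possible continuations are killed by Rule 2, pinning the word down. You instead re-derive everything from the window criterion itself: the balance path $b$, the parity dichotomy for forbidden windows, the level-set multiplicity bound $|B_v|\le 2$ (with $|B_0|\le 3$), and a two-sided squeeze induction that forces $b(i)=i$ up to the apex. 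Your version is more self-contained and makes transparent \emph{why} exactly two zeros appear (one closing the cycle, one at the apex), and your $\gcd(\frac{n}{2}+1,n)$ computation for the constant words is cleaner than the paper's appeal to Rule 2/3 there; the paper's version is shorter because Rules 2--3 are already proved and get reused in the later classification of paths. Two points you should make explicit when writing this up: (i) the step ``non-constant $\Rightarrow$ has a flat step'' needs the one-line observation that a sign change with no intervening zero is itself a forbidden even window (this is Rule 1 in disguise); and (ii) the justification of the multiplicity bound should be phrased as ``consecutive points of $B_v$ must have odd gaps, so three points force an even gap between the extremes'' — which you do state — rather than appealing to the extremes directly. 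With those spelled out, the argument is complete.
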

\begin{proof}
\textit{Step 1}: There is no extended difference row without 0's unless $n=4k$: Since 1's and -1's don't follow each other, the only extended difference rows with no zeroes are $(1,1,\dots, 1)$ and $(-1, -1, \dots, -1)$. Lemma \ref{disallowed patterns} Rule 2 gives $n=4k$. As one may check, both of these difference rows define valid rows. 

\noindent\textit{Step 2}: Suppose then $d_*$ contains 0. WLOG by rotation and negation, $d_*$ begins in $(0, 1, \dots)$. Suppose $d_*$ contains only 0's and 1's, say $q$ sequences of 1's: 
\[(0, \underbrace{1, \dots, 1}_{m_1}, 0, \underbrace{1, \dots, 1}_{m_2}, 0, \dots, 0, \underbrace{1, \dots, 1}_{m_q}, \epsilon_{n-1}, h)\]
Lemma \ref{disallowed patterns} Rule 3 now says $m = \sum_{i=1}^q m_i \leq \frac{n}{2}-1$. Therefore the number of 0's must be at least $\frac{n}{2}$. If we want no 0's to be adjacent, then we must have the following row, which alternates between 0 and 1:
\[(0, 1, 0, 1, \dots, 1, 0, h)\]
Though this is a path, this is not a cycle as this implies $h = \frac{n}{2}-1$. Therefore $d_*$ contains a -1. This implies there exists a subsequence of the form:
\[(0, \underbrace{1, \dots, 1}_{m}, 0, \underbrace{-1, \dots, -1}_{m'})\]
Either this is the end of $d_*$, or following the $m'$ -1's is a 0. In the latter case, Lemma \ref{disallowed patterns} Rule 2 gives that $m=m' = \frac{n}{2} - 1$, but then we have 3 zeroes and $n-2$ non-zero entries in $d_*$, a contradiction. Therefore at the end of $d_*$ is all -1's. But now rotating $d_*$ by -1 implies that it must start in all 1's. Therefore Lemma \eqref{disallowed patterns} Rule 2 says $m = m' = \frac{n}{2}-1$.
\[d_* = (0, \underbrace{1, \dots, 1}_{\frac{n}{2} - 1}, 0, \underbrace{-1, \dots, -1}_{\frac{n}{2} - 1})\]
Therefore every cyclic extended difference row (that contains 1's and -1's) is a rotation of $d_*$ shown above (note that rotation by $\frac{n}{2}$ is the same as negation!), of which there are exactly $n$ of. I leave to the reader to verify this pattern always defines a cyclic row.
\end{proof}
\begin{corol}
If $n=4k$, there are $n + 2$ cirulant Latin squares of maximum inner distance, and $n+2$ back-circulant Latin squares of maximum inner distance. If $n = 4k+2$, there are $n$ of each type of square.
\end{corol}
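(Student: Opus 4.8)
The plan is short: this is a direct specialization of Theorem \ref{classification of cycles} transported across the bijection of Theorem \ref{circulants equal to cycles}, so the work is almost entirely bookkeeping. First I would fix the normalization $m_{1,1} = 1$, which by Corollary \ref{choice of symbol} and Theorem \ref{equivalence by adding} is harmless: every circulant square is an addition of a unique circulant square with a $1$ in the top-left cell. With this convention, Theorem \ref{circulants equal to cycles} says that circulant Latin squares of maximum inner distance with $m_{1,1} = 1$ are in bijection with the normal Hamiltonian cycles on the maximum distance graph: the first row of such a square is exactly a normal cyclic row of maximum inner distance, and conversely any normal cyclic row, placed in row $1$ and shifted one step right in each successive row, produces a circulant square whose adjacent pairs — the horizontally adjacent ones being consecutive entries of the first row, the vertically adjacent ones being cyclically consecutive entries of it — all lie at distance $\geq \frac{n}{2}-1$, hence of maximum inner distance. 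Repeating the argument with ``shifted left'' in place of ``shifted right'' gives the same bijection between back-circulant squares (with $m_{1,1}=1$) and normal Hamiltonian cycles.

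Next I would just read off the count from Theorem \ref{classification of cycles}. There it is shown that the normal Hamiltonian cycles on the maximum distance graph are precisely the $n$ distinct rotations of $(1,\dots,1,0,-1,\dots,-1,0)$, together with $\pm(1,1,\dots,1)$ when $n = 4k$ — the latter two being excluded when $n = 4k+2$ by Rule 3 of Lemma \ref{disallowed patterns}. So there are $n+2$ such cycles when $n \equiv 0 \pmod{4}$ and $n$ when $n \equiv 2 \pmod{4}$, and carrying this number through the two bijections above gives $n+2$ circulant and $n+2$ back-circulant squares in the first case, and $n$ of each in the second.

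There is no genuine obstacle; the only thing demanding care is being explicit about the normalization, and this is also the place the statement could mislead. Counting is up to the addition equivalence of Theorem \ref{equivalence by adding} (equivalently, among squares with $m_{1,1}=1$): adding a nonzero constant to a circulant square yields a different circulant square, so without fixing $m_{1,1}$ each total would be multiplied by $n$, and it is precisely this normalized count that later enters the $P(n)^2 + 2n$ tally of the main theorem. I would also flag, without proving anything here, that the $n+2$ circulants and the $n+2$ back-circulants are two separate lists (one Hamiltonian cycle yields one square of each type) and that the overlaps between these lists and with the row products are handled separately via Lemma \ref{circulants cap products}.
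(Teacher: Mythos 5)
Your proposal is correct and follows essentially the same route as the paper: the paper's proof is exactly the combination of the bijection from Theorem \ref{circulants equal to cycles} with the count from Theorem \ref{classification of cycles}. Your added remarks on the $m_{1,1}=1$ normalization and on the later overlap accounting via Lemma \ref{circulants cap products} are accurate clarifications of what the paper leaves implicit, not a different argument.
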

\begin{proof}
Theorem \ref{circulants equal to cycles} establishes circulants and back-circulants are in bijection to Hamiltonian cycles, which are counted in Theorem \ref{classification of cycles}.
\end{proof}
\begin{corol}\label{MiD circulants cap products}
For $n=4k+2$ there are no circulants or back-circulants of maximum inner distance that are row products. For $n=4k$, there are 2 (normal) circulants and 2 (normal) back-circulants that are row products.
\end{corol}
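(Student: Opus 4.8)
The plan is to lean on Lemma \ref{circulants cap products}, which says that a circulant (or back-circulant) Latin square $L$ is a row product exactly when its difference matrices $H$ and $V$ are \emph{both} constant. So the whole task reduces to counting normal circulant (and normal back-circulant) Latin squares of maximum inner distance whose difference matrices are constant, and this is where the case split on $n \bmod 4$ will enter.

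First I would pin down what ``$H$ and $V$ constant'' forces for a circulant. If $L$ is circulant with first row $r$ and $H \equiv h$ is constant, then $r$ has the constant difference row $d = (h, h, \dots, h)$, and the shift structure of a circulant forces $V \equiv -h$; for a back-circulant one gets $V \equiv h$ instead. Either way the single free parameter is $h$, so I need to decide which values of $h$ are admissible. Two elementary facts do it. (i) The constant vector $(h,\dots,h)$ is the difference row of an honest Latin row if and only if $\gcd(h,n)=1$: this is Condition 0 of Lemma \ref{H V conditions}, since the partial sums of a constant row are the multiples $mh$ with $1\le m\le n-1$, and these avoid $0 \bmod n$ precisely when $h$ has order $n$ in $\ZZ{n}$. (ii) Since the wrap-around difference is $s_1 - s_n \equiv (n-1)h \equiv h$, the inner distance of the resulting (cyclic) row is just $\dist(h,0)$, so it is maximal --- equal to $\frac{n}{2}-1$ --- exactly when $h \equiv \frac{n}{2}-1$ or $h\equiv \frac{n}{2}+1 \pmod n$. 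Alternatively, both facts can be read straight off Theorem \ref{classification of cycles}: among the cyclic extended difference rows classified there, the only ones that are constant are $\pm(1,1,\dots,1)$, and Theorem \ref{classification of cycles} tells us these occur if and only if $n=4k$, with each of them giving (by Theorem \ref{circulants equal to cycles}) one normal circulant and one normal back-circulant.

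With the characterization in hand the rest is a parity check. When $n=4k$ we have $\frac{n}{2}\pm 1 = 2k\pm 1$, both odd, hence coprime to $4k$: both values of $h$ are admissible, giving $2$ normal circulants that are row products and, by the same argument, $2$ normal back-circulants. When $n=4k+2$ we have $\frac{n}{2}\pm 1 = 2k$ and $2k+2$, both even, hence sharing the factor $2$ with $n=4k+2$: neither is admissible, so no circulant or back-circulant of maximum inner distance is a row product. If desired one can also note that for $n=4k$ the two circulant examples and the two back-circulant examples are genuinely distinct, since a square that were simultaneously circulant and back-circulant would need $-h\equiv h$, forcing $h\equiv 0$ or $h\equiv \frac{n}{2}$, neither coprime to $n$.

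The only genuinely delicate point --- and the step I would be most careful about --- is the bookkeeping in the reduction: showing that ``circulant $+$ row product'' really collapses both $H$ and $V$ to a single constant $h$ linked by $v=-h$ (respectively $v=h$ for back-circulants), rather than leaving $h$ and $v$ as independent parameters whose product would inflate the count to $4$ in the $n=4k$ case. Once that link is nailed down via Lemma \ref{circulants cap products} together with the shift structure of a (back-)circulant, everything else is the routine coprimality/parity computation above.
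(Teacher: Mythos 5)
Your proposal is correct and takes essentially the paper's route: both arguments reduce via Lemma \ref{circulants cap products} to counting constant difference rows of maximum inner distance, which the paper (like your ``alternative'' reading) simply reads off Theorem \ref{classification of cycles} as the rows $\pm(1,\dots,1)$, existing only for $n=4k$. Your supplementary coprimality check is a nice self-contained verification, though the inference ``both odd, hence coprime to $4k$'' deserves the one-line justification that $\gcd\left(\frac{n}{2}\pm 1, n\right)$ divides $2$, so oddness really does suffice here.
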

\begin{proof}
Lemma \ref{circulants cap products} establishes the only squares that are circulants and row products have $H,V$ constant matrices. Thus their difference rows are constant rows, which is only true for $n=4k$ with the difference rows $\pm d$ with $d=(1, 1, \dots, 1)$. The 4 row products from combinations of $d$ and $-d$ are those squares.
\end{proof}

\subsection{Counting Hamiltonian Paths of Maximum Inner Distance}\label{paths subsection}
We classify Hamiltonian paths using the same method. Up to negation there is a unique extended difference row with $h=0$, as we saw in theorem \ref{classification of cycles}. There turn out to be two more classes of paths in general, which we call type 1 and type 2 paths. 

\begin{thm}[Classification of the Hamiltonian Paths]\label{classification of paths}
Let $d_* = (\epsilon_1, \epsilon_2, \dots, \epsilon_{n-1}, h)$ be an extened difference row with $h\geq 0$. Then $d_*$ is of one of the following forms
\begin{enumerate}
    \item[0.] If $h = 0$, $d_*$ is the form of a Hamiltonian cycle in Theorem \ref{classification of cycles}. 
    \item[Type 1.] If $h > 0$ and $d_*$ has only 0's and -1's, then $h = \frac{n}{2} - 1$ and the first $n-1$ entries alternate:
        \[d_* = ( \underbrace{0, -1, 0, -1, \dots, -1, 0}_{\text{alternating}} , \frac{n}{2} - 1)\]
    If $h> 0$ and $d_*$ has only 0's and 1's then $d_*$ is of form:
        \[(\underbrace{1, \dots, 1}_{\frac{n}{2} - h + 1}, \underbrace{0, 1, \dots, 1, 0, 1}_{h-1 \text{ $(0,1)$ pairs}}, \underbrace{1, \dots, 1}_{\frac{n}{2} - h}, h)\]
    The center sequence alternates between 0 and 1, with $h-1$ of each value, for odd $h\in [1, \frac{n}{2}-1]$ if $n = 4k$, and even $h\in [1, \frac{n}{2}-1]$ if $n = 4k+2$. 
    
    \item[Type 2.] If $h > 0$, and contains 0's, 1's, and -1's, then $d_*$ is of form:
        \[(\underbrace{1, \dots, 1}_{m}, 0, \underbrace{-1, \dots, -1}_{m+1}, \underbrace{0, -1, \dots, 0, -1}_{h-1\text{ $(0,-1)$ pairs}}, \underbrace{-1, \dots, -1}_{\frac{n}{2} - h - m - 1}, 0, \underbrace{1, \dots, 1}_{\frac{n}{2} -h -m - 1}, h)\]
    For some $h\in [1, \frac{n}{2}-2]$, and each $m\in [0, \frac{n}{2} -1 - h]$.
\end{enumerate}
\end{thm}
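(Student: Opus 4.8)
The plan is to classify extended difference rows with $h > 0$ by the same "forbidden pattern" bookkeeping used in Theorem~\ref{classification of cycles}, splitting into cases according to which of the values $0, 1, -1$ actually occur. First I would fix $d_* = (\epsilon_1, \dots, \epsilon_{n-1}, h)$ with $h > 0$, and recall from Lemma~\ref{diff row properties} that each $\epsilon_j \in \{-1, 0, 1\}$ at maximum inner distance, that consecutive entries cannot be $(0,0), (1,-1), (-1,1)$ by Rule 1, and that $h + \sum_{j=1}^{n-1}\epsilon_j \equiv 0 \pmod n$ together with $|h| \le \tfrac n2 - 1$ forces $h + \sum \epsilon_j = 0$ exactly (since the sum of $n$ numbers each of absolute value $\le 1$, one of which is $h$, lies strictly between $-n$ and $n$). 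This last identity — the entries sum to zero over the integers — is the workhorse of every case.

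Case~0 ($h=0$) is immediate from Theorem~\ref{classification of paths}'s item 0, since $h=0$ means $d_*$ is a cycle, handled already. For Type~1 I would first treat the "only 0's and $-1$'s" subcase: with no 1's, Rule 1 forbids $(0,0)$, so the first $n-1$ entries must alternate $0, -1, 0, -1, \dots$; a parity count of how many $0$'s versus $-1$'s this forces, combined with $\sum \epsilon_j = -h$, pins down both the alternating pattern (it must start and end with $0$, giving one more $0$ than $-1$) and the value $h = \tfrac n2 - 1$. The "only 0's and 1's" subcase is dual but requires more care: by Rule 3 the total number of 1's strictly exceeds... no — Rule 3 with $\sum m_i = \tfrac n2$ is the \emph{forbidden} borderline, so here the 1's come in $q$ blocks with $q-1$ interior $0$'s and total $\sum m_i \ge \tfrac n2$; I would argue the block structure must be "two long outer blocks of 1's framing an alternating $0,1,\dots$ center," then use $\sum \epsilon_j = -h$ (note $\epsilon_j = +1$ contributes positively, so actually $\sum \epsilon_j = \#\{1\text{'s}\} > 0$, which forces the sign convention $h < 0$ — hence one negates, consistent with the $h \ge 0$ normalization meaning this subcase really arises from negating an all-$(0,1)$ row) and the no-adjacent-$0$'s constraint from Rule 3's addendum to extract exactly the displayed form with $h-1$ interior $(0,1)$ pairs, the parity condition on $h$ matching the $n \equiv 0$ vs. $2 \pmod 4$ split coming straight from Rule 3.

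For Type~2, where all three symbols $0, 1, -1$ occur, I would locate a maximal run of 1's, say of length $m$, necessarily bordered by $0$'s (Rule 1), and then a following run of $-1$'s. Applying Rule 2 to the sub-pattern $(1^m, 0, -1^{m'})$ — or rather its two-sided version — controls $m'$ relative to $m$; the claim is $m' = m+1$, which I would get by combining Rule 2's inequality with the global sum $\sum \epsilon_j = -h < 0$ and the fact that after the $-1$ run the only way to "use up" the remaining vertices without another forbidden configuration is an alternating $0,-1,\dots$ stretch followed by a final $-1$ block and then a $0, 1^{\ell}$ tail; matching lengths via $\sum \epsilon_j = -h$ and the total length $n-1$ yields $\ell = \tfrac n2 - h - m - 1$ and the two alternating stretches each contributing $h-1$ pairs. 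The main obstacle, I expect, is the Type~2 case: unlike the cycle classification, the asymmetry introduced by $h$ breaks the clean rotational reduction, so one cannot simply "rotate to start with $0,1,\dots$" — the position of $h$ at the end is now meaningful — and I will have to carefully argue that no other arrangement of the three runs (e.g. 1's appearing both before and after the $-1$ block, or multiple separated $-1$ blocks) survives Rules 1–3; showing those are all excluded, and that the lengths are forced rather than merely bounded, is where the real work lies. Throughout, the easy direction — that each displayed $d_*$ genuinely is a valid Hamiltonian path (i.e. satisfies Condition 0 of Lemma~\ref{H V conditions}, no partial sum $\equiv 0$) — I would verify by a direct partial-sum computation and leave the routine instances to the reader.
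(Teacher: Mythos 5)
There is a genuine gap, and it sits at the foundation of your argument. You declare as your ``workhorse'' the integer identity $h + \sum_{j=1}^{n-1}\epsilon_j = 0$, justified by saying the sum of $n$ numbers each of absolute value $\le 1$ lies strictly between $-n$ and $n$. But $h$ is not bounded by $1$; it can be as large as $\frac{n}{2}-1$ in absolute value, so the total sum ranges over roughly $[-\frac{3n}{2}, \frac{3n}{2}]$ and the congruence $h+\sum\epsilon_j \equiv 0 \pmod{n}$ only pins the sum down to $\{-n, 0, n\}$. The identity genuinely fails for exactly the rows you need it for: the type~1 rows with only $0$'s and $1$'s and $h>0$ have integer sum equal to $n$, not $0$ (e.g.\ for $n=6$, $d_* = (1,1,0,1,1,2)$ from Figure \ref{n=6 ext diff row examples} sums to $6$). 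Your own patch --- arguing that $\sum\epsilon_j > 0$ forces $h<0$, so this subcase ``really arises from negating'' --- would prove that no $(0,1)$-only row with $h>0$ exists, which contradicts the theorem you are proving. The paper's proof avoids this trap by working with $h_n$ modulo $n$: it computes $h_n \equiv q + \frac{n}{2}$ where $q$ is the number of blocks of $1$'s, and then uses the bound $1 \le q < \frac{n}{2}$ to identify the correct representative and conclude $h = q$. Some version of that representative-selection argument is unavoidable, and your proposal has no substitute for it.

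Beyond that, the part you yourself flag as ``where the real work lies'' --- showing in the type~2 case that the three symbols can only be arranged as two outer blocks of $1$'s framing a single $0$-separated region of $-1$'s, with all block lengths forced --- is precisely the content of the paper's Step~2, which it gets from the observation (derived from Rule~2) that no subsequence can begin and end in $0$ while containing both $1$'s and $-1$'s, hence the sign flips at most twice; the forced lengths then come from playing the two inequalities $h \le q$ and $h \ge q$ against each other. Your proposal names the right ingredients (maximal runs, Rule~2, a length count) but defers the actual exclusion of alternative arrangements, so even granting the corrected sum identity the classification is not established. As a smaller point, the type~2 form has only one alternating stretch of $h-1$ pairs of $(0,-1)$, not two.
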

{
\noindent\textbf{Remark.}
By negation, the classification similarly holds for $h<0$ as well, using $|h|$ to count in lieu of $h$. Where not indicated, the ellipses $\dots$ consist only of 1's or only of -1's. By `$h-1~ (0,\pm 1)$ pairs', we mean $h-1$ consecutive pairs of 0 and $\pm1$. The Hamiltonian cycles ($h=\pm1$) are all type 2, with the alternating 0's and -1's vanishing, except for $n=4k$ and $d_* = (1, \dots, 1)$, which is type 1!
}
\begin{proof}
Point 0. is true by the classification of cycles, theorem \ref{classification of cycles}.

{
\noindent\textit{Step 1}: We check that if $d_*$ has no -1's, or no 1's, then it is type 1. In the proof of theorem \ref{classification of cycles}, we also showed that $d_*$ begins in 0 and contains no -1's, then $d_*$ is either the alternating row shown in the theorem statement, or $d_*$ begins in 1's. Supposing the latter is true, let's suppose there are $q$ sequences of 1's. One might recall that reversing a difference row is a valid operation, therefore $d_*$ begins and ends in 1, and thus has $q-1$ zeroes in between the 1's:
\[d_* = (\underbrace{1, \dots, 1}_{m_1}, 0, \underbrace{1, \dots, 1}_{m_2}, 0, \dots, 0, \underbrace{1, \dots, 1}_{m_q}, h)\]
Lemma \ref{disallowed patterns} Rule 3 says $n=4k$ and $q$ is odd, or $n=4k+2$ and $q$ is even. Let $m = \sum_{i=1}^q m_i = \frac{n}{2} + r$ denote the number of 1's in the sequence. Lemma \ref{disallowed patterns} Rule 3 gives $\sum_{i=2}^q m_i < \frac{n}{2}$, therefore $m_1\geq r + 1$, and similarly $\sum_{i=1}^{q-1} m_i < \frac{n}{2}$, therefore $m_q\geq r + 1$. There are $m$ 1's and $q-1$ zeroes in the first $n-1$ entries, so $m + q = n$. Using this, we compute $h$ in terms of $q$:
\[h_n \equiv -\sum_{j=1}^{n-1}h_j \equiv -\left[m\left(\frac{n}{2}+1 \right) + (q-1)\frac{n}{2}\right] \equiv -\left[(n-q)\left(\frac{n}{2}+1 \right) + (q-1)\frac{n}{2}\right] \equiv q + \frac{n}{2}\]
By assumption $1\leq q \leq \frac{n}{2}$, now since $h_n\neq 0$, we strengthen to $q < \frac{n}{2}$. Therefore, $h = h_n - \frac{n}{2} = q$. Further $m+q = \frac{n}{2} + r +q = n\implies r = \frac{n}{2} - h$. By assumption $m_i\geq 1$ for all $i$, and we know that $m_1, m_q \geq r+1 = \frac{n}{2} - h + 1$. Using $n=m +q = m+h$, we show $d_*$ takes the form given by the theorem:
\[n = m +h = m_1 + m_2 + \dots + m_{h-1} + m_h + h \geq \left(\frac{n}{2} - h + 1\right) + \underbrace{1  + \dots + 1}_{h-2} + \left(\frac{n}{2} - h + 1\right) + h = n\]
\[\implies m_1=m_q = \frac{n}{2} - h - 1,\quad m_i = 2 \text{ if }i\in[2,h-1]\]
}

{
\noindent\textit{Step 2}: Now assume the first $n-1$ entries in $d_*$ contain every value in $\{0, \pm 1\}$. The following can easily be derived from Lemma \ref{disallowed patterns} Rule 2:

\textit{There is no subsequence in an extended difference row $d_*$ that begins and ends in 0, and contains both 1's and -1's in between}.

Because 1's and -1's meet with 0's in between, this means that $d_*$ flips signs at most 2 times:
\begin{equation}\label{alt twice}
  d_* = \pm (\underbrace{1, \dots, 1}_{\text{all 1's}}, \underbrace{ 0, -1, \dots, -1, 0}_{\text{0's and -1's}}, \underbrace{1, \dots, 1}_{\text{all 1's}}, h)  
\end{equation}
If $d_*$ alternates just once, it is of one of the forms:
\begin{equation}\label{alt once 1}
    d_* = \pm(\underbrace{1, \dots, 1}_{\text{all 1's}}, \underbrace{ 0, -1, \dots, \epsilon_{n-1}}_{\text{0's and -1's}}, h)
\end{equation}
\begin{equation}\label{alt once 2}
    d_* = \pm(\underbrace{ \epsilon_1, \dots, -1, 0}_{\text{0's and -1's}}, \underbrace{1, \dots, 1}_{\text{all 1's}}, h)
\end{equation}
First assume that $d_*$ is in the form of Equation \ref{alt twice}:
\[(\underbrace{1, \dots, 1}_{m_1}, 0, \underbrace{-1, \dots, -1}_{m_1'}, 0, -1, \dots, -1, 0, \underbrace{-1, \dots, -1}_{m_q'}, 0 \underbrace{1, \dots, 1}_{m_1}, h)\]
Let $q$ denote the number of sequences of -1's in the center, the $i$-th sequence having $m_{i}'>0$ many -1's, and $m_1, m_2>0$ denote the number of 1's on the left and right ends, respectively. Let $m$ and $m'$ denote the total number of 1's and -1's, respectively, in the first $n-1$ entries. Lemma \ref{disallowed patterns} Rule 3 gives $m' < \frac{n}{2}$, and Rule 2 gives $m_1 < m_1'$ and $m_2 < m_q'$. Thus $m' \geq m_1' + m_q' > m$ if $q\geq 2$. Counting all our terms we have $n-1 = m + m' + q+1$ (there are $q+1$ zeroes). First we compute the value of $h_n$ modulo $n$:
\[-h_n \equiv (q+1)\left(\frac{n}{2}\right) + m\left(\frac{n}{2} + 1\right) + m'\left(\frac{n}{2} - 1\right) \equiv (m+m' + q+ 1)\frac{n}{2} + m - m'\pmod{n}\]
\[\implies h_n \equiv m' - m + \frac{n}{2}\pmod{n}\]
If $q=1$: Lemma \ref{disallowed patterns} Rule 2 says exactly that $m = m' = \frac{n}{2} - 1$ and $d_*$ is a Hamiltonian cycle (which is type 2 as remarked above) with $h=1$. 

Now we can assume $h,q > 1$. $q>1$ gives $h_n = m' - m + \frac{n}{2} > \frac{n}{2} \implies h = m' - m$. Since $\frac{n}{2} > m'$ by Rule 3, the relation $m' - m = h$ also implies $\frac{n}{2} - h > m$. Therefore:
\[n = m' + m + q + 2 < \left(\frac{n}{2} - 1 \right) + \left(\frac{n}{2} - h - 1 \right) + q + 2 = n -h + q + 1 \iff h < q+1\]
\begin{equation}\label{ineq1}
    \implies h \leq q
\end{equation}

So $q \geq h\geq 2$. Now using $m + h = m'$ and expanding $m, m'$:
\[m_1 + m_2 + h = m_1' + m_q' +\sum_{i=2}^{q-1}m_i' \geq (m_1 + 1) + (m_2 + 1) + (q-2) = m_1 + m_2 + q \]
\begin{equation}\label{ineq2}
    \implies h\geq q
\end{equation}
Therefore, $h = q$ by equations \ref{ineq1} and \ref{ineq2}. Further, it must also be true that $m_1' = m_1 + 1$ and $m_q' = m_2 + 1$, as well as $m_i' = 1$ for every $i$ in between. We are done with (this case of) the theorem, as now:
\[m' + m + q + 2 = n \leq \left(\frac{n}{2} - 1\right) + \left(\frac{n}{2} -h + 1\right) + h + 2 = n \]
And $n \leq n$ holds if and only if the inequalities $m' \leq \frac{n}{2} - 1$, and $m \leq \frac{n}{2} - h - 1$ are equalities. To summarize, in the first $n-1$ entries there are $(\frac{n}{2} - h -1)$ 1's, $(h + 1)$ 0's, and $(\frac{n}{2} - 1)$ -1's, in the exact order given in the theorem. One might note that setting $h = \frac{n}{2} - 1$ for a type 2 does not work, as the center is too wide. So $h\leq \frac{n}{2}-2$. 

If $\epsilon_{n-1}=0$ in Equation \ref{alt once 1}, we reduce to the case $m_2 = 0$ of Equation \ref{alt twice}. The same happens if $\epsilon_1 = 0$ for Equation \ref{alt once 2}. We show this is true for Equation \ref{alt once 1}, the other I leave to the reader. Assume $d_*$ looks like:
\[d_* = (\underbrace{1, \dots, 1}_{\text{all 1's}}, \underbrace{ 0, -1, \dots, \epsilon_{n-1}}_{\text{0's and -1's}}, h)\]
We wish to show $\epsilon_{n-1} = 0$ if there is more than one sequence of -1's, otherwise we have a cycle. For the sake of contradiction, assume $\epsilon_{n-1} = -1$. Let $m$ denote the number of 1's at the front, and suppose there are $q\geq1$ sequences of -1's, with the $i$-th sequence having $m_i'$ such -1's, with $m' = m_1' + \dots + m_q'$. If $q=1$, Lemma \ref{disallowed patterns} Rules 2 and 3 combine to show that $d_*$ is the Hamiltonian cycle discussed above, so this case is done. Our basic assumptions are now $q > 1$, $m < m' < \frac{n}{2}$ (by Lemma \ref{disallowed patterns} Rule 3) and the sum of terms gives $n -1 = m + m' +q$. Computing $h$ as we did above gives $h = m' - m$. Now setting $m + h = m'$, and expanding $m' = m_1' + \dots + m_q' \geq (m+1) + 1 + \dots + 1 = m + q $ says that $h\geq q$. However using $n = m' + (h-m') + q + 1$, we can use $m'\leq \frac{n}{2} - 1$ to obtain that $h+1 \leq q$. But now, the statements $h\geq q$ and $q \geq h+1$ are in contradiction. 
}
\end{proof}
We have not proven that each of the forms given above actually produce rows of maximum inner distance. One needs only to check that $\sum_{j=j_1}^{j_2}h_{j}$ is non-zero for all $j_1 \leq j_2\leq n-1$, which I leave to the reader.

\begin{thm}\label{number of paths}
Let $n$ be even, and let $P(n)$ denote the number of Hamiltonian paths in the maximum distance graph. If $n=4k$, then $P(n) = \frac{1}{4}n^2 + 2 = 4k^2 + 2$. If $n=4k+2$, then $P(n) = \frac{1}{4}n^2 + 1 = 4k^2 + 4k+2$.
\end{thm}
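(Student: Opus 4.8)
The plan is to count the Hamiltonian paths directly from the classification in Theorem \ref{classification of paths}, organizing the count by the three mutually exclusive cases (the $h=0$ case, Type 1, and Type 2) and then handling the sign of $h$ carefully. Recall that a Hamiltonian path on the maximum distance graph corresponds to a normal row, hence to a unique extended difference row $d_* = (\epsilon_1, \dots, \epsilon_{n-1}, h)$; so counting paths is exactly counting the extended difference rows permitted by the classification. I would first dispose of $h=0$: by Theorem \ref{classification of cycles} (and the opening remark of Section \ref{paths subsection}) there is, up to negation, a unique such $d_*$, and since negating it gives a genuinely different row, this contributes exactly $2$ to $P(n)$ — \emph{except} I must double check the boundary behavior when $\frac{n}{2}-1$ makes the two halves coincide, which does not happen for $n \geq 6$.

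Next I would count Type 1 rows. These split into the "alternating $0,-1$" family, which by the statement is forced (first $n-1$ entries alternate starting and ending in $0$, with $h = \frac{n}{2}-1$) and so contributes a bounded number of rows once we also allow its negation/reversal; and the "$0$'s and $1$'s" family parametrized by $h$ ranging over the odd values in $[1, \frac{n}{2}-1]$ when $n=4k$ and the even values when $n=4k+2$. For each admissible $h>0$ there is one such $d_*$, and then negation doubles the count to cover $h<0$. So the Type 1 contribution is essentially $2$ times (number of admissible positive $h$) plus the fixed contribution of the alternating family, and I would compute $\#\{\text{odd } h \in [1,\frac n2 -1]\}$ and $\#\{\text{even } h\in[1,\frac n2-1]\}$ as simple floor expressions in $k$. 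The one subtlety is bookkeeping overlaps: the remark notes that the cycles ($h=\pm1$) are "type 2" while $(1,\dots,1)$ is "type 1", so I must be scrupulous that the $h=0$, Type 1, and Type 2 tallies are over disjoint sets of rows and that each $d_*$ is counted once.

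The bulk of the arithmetic, and the main obstacle, is the Type 2 count. Here the classification gives, for each $h \in [1, \frac{n}{2}-2]$, a family indexed by $m \in [0, \frac{n}{2}-1-h]$, i.e. $\frac{n}{2}-h$ choices of $m$ for each $h$; summing $\sum_{h=1}^{n/2-2}(\frac n2 - h)$ gives a quadratic in $n$, roughly $\frac18 n^2$. Then negation doubles this to account for $h<0$. Adding up: Type 2 contributes about $\frac14 n^2$, Type 1 contributes a linear-in-$n$ term, and $h=0$ contributes $2$, and the claim is that the total collapses to exactly $\frac14 n^2 + 2$ for $n=4k$ and $\frac14 n^2 + 1$ for $n=4k+2$. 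The danger spots are: (i) whether some Type 2 rows are fixed by negation (they are not, since $h$ changes sign and $h\neq 0$ there) so the doubling is honest; (ii) whether any Type 2 row at an endpoint value of $m$ or $h$ secretly coincides with a Type 1 or cycle row — in particular the remark that $h=\pm 1$ Type 2 rows "are" the cycles, so those must be excluded from the cycle tally or from the Type 2 tally but not both; and (iii) getting the parity-dependent count of admissible Type 1 values of $h$ exactly right so that the linear terms cancel against whatever half-integer corrections appear, leaving the clean closed forms. I would set up one table of $(\text{case}, \text{range of } h, \text{range of } m, \text{multiplier for sign})$, verify disjointness row by row against the examples in Figure \ref{n=6 ext diff row examples} for $n=6$ (where $P(6)=10$), and then carry out the two summations $\sum_{h}(\frac n2 - h)$ for Type 2 and the parity count for Type 1, finally checking the totals match $4k^2+2$ and $4k^2+4k+2$.
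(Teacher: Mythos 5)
Your proposal is correct and follows essentially the same route as the paper's proof: both count extended difference rows via the classification, contributing $2$ for $h=0$, $2(k+1)$ for the Type 1 family (the parity-restricted $h$ values plus the alternating $0,-1$ row), and $2\sum_{h=1}^{n/2-2}\left(\frac{n}{2}-h\right) = \frac{n^2}{4}-\frac{n}{2}-2$ for Type 2, with the doubling accounting for negation. The bookkeeping worries you flag (disjointness of the tallies, the $h=\pm 1$ cycles living only in the Type 2 count) are exactly the points the paper's tally implicitly resolves, and your sums reproduce its totals.
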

\begin{proof}
We count how many of each type there are. By Theorem \ref{classification of cycles}, there are 2 paths with $h=0$ for all $n$. 

\textbf{Suppose} $\mathbf{n = 4k}$: Note there is a unique type 1 row for each possible odd value of $h$ in $[1, \frac{n}{2} - 1]$, plus the additional row with $h=\frac{n}{2}-1$, $(0, -1, 0, -1, \dots, -1, 0, \frac{n}{2} - 1)$. So there are $k+1$ type 1's. Type 2's are determined by two values, which we call $(m,h)$. $h$ ranges in $[1, \frac{n}{2} -2]$, and is the last entry in the extended difference row, while $m$ denotes the number of lead 1's, ranging in $[0, \frac{n}{2} - 1 - h]$. So there are $(\frac{n}{2} - h)$ rows ending in $h$, giving a total number of Hamiltonian paths of:
\[P(n) = 2 + 2(\text{(type 1) + (type 2)}) = 2 + 2\left(\frac{n^2}{8} - \frac{n}{4} - 1 + \frac{n}{4} + 1 \right) = \frac{n^2}{4} + 2\]
The 2 coefficient comes from the fact that we need to also count those with $h < 0$. A similar calculation is shown for $n=4k+2$:
\[(\text{type 1's}) = k + 1 = \frac{n+2}{4}\]
\[(\text{type 2's}) = \sum_{h=1}^{\frac{n}{2} - 2}\frac{n}{2} - h = \frac{n^2}{8} - \frac{n}{4} - 1\]
\[P(n) = 2 + 2\left[ \frac{n+2}{4} + \frac{n^2}{8} - \frac{n}{4} - 1 \right] = 2 + 2\left(\frac{n^2}{8} - \frac{1}{2}\right) = \frac{n^2}{4} + 1\]
\end{proof}

\subsection{Counting Latin Squares of Maximum Inner Distance}
For the rest of the paper, the set of Latin squares of maximum inner distance $\floor{\frac{n-1}{2}}$ will be denoted $\MiDn$. Assume $L\in \MiDn$ and as usual $H,V$ denote its horizontal and vertical difference matrices. We will overload some notation: $d_1, d_2, \dots, d_n$ denote both the extended difference rows of rows $1, 2, \dots, n$, and rows $1, 2, \dots, n$ of the horizontal difference matrix $H$, which should be clear in context. $d_i = (\epsilon_{i, 1}, \epsilon_{i, 2}, \dots, \epsilon_{i, n})$, with $\epsilon_{i,n}$ denoting the value previously denoted $h$.

There is one more piece to the puzzle: we prove every even ordered Latin square of maximum inner distance is either a product of rows, a circulant, or a back-circulant. We show this is by using the Classification Theorem \ref{classification of paths} to study how rows behave as neighbors in a Latin square of maximum inner distance. 
\begin{defn}
Let $d_*$ and $d_*'$ be extended difference rows of maximum inner distance. $d_*$ and $d_*'$ are \textbf{neighbors} if their induced rows $r$ and $r'$ may form a $2\times n$ Latin rectangle with of maximum inner distance.
\end{defn}
Occasionally we may also describe $r$ and $r'$ as neighbors. For example, in a row product $\prod(d_*, d_*')$ every row has extended difference row $d_*$. So $d_*$ is its own neighbor. The following rows we will call by name since they are frequently referenced:
\begin{enumerate}
    \item The type 1 row with $h=1-\frac{n}{2}$ that \textbf{alternates} between in the first $n-1$ entries, $(0, 1, 0, \dots, 1, 0, 1-\frac{n}{2})$ will be known as \textbf{Row A}.
    \item The \textbf{cycle} with $h=0$ beginning at 1, $(1, \dots, 1, 0, -1, \dots, -1, 0)$ will be called \textbf{Row C}.
\end{enumerate}
Much of the theorems in this section are based on the following lemma, relating the horizontal differences within neighbors:

\begin{lemma}\label{adj sum lemma}
Let $d_* = (\epsilon_1, \epsilon_2, \dots, \epsilon_{n-1}, h)$ and $d_*' = (\epsilon_1', \epsilon_2', \dots, \epsilon_{n-1}', h')$ be non-equal neighbors, inducing neighboring rows $r = (s_1, s_2, \dots, s_n)$ and $r' = (s_1', s_2', \dots, s_n')$. Then for all $j_1, j_2$, with $1\leq j_1\leq j_2 \leq n-1$:
\[\left|\sum_{j=j_1}^{j_2} (\epsilon_j' - \epsilon_j)\right| \leq 2\]
Further, $|h - h'| \leq 2$ or $|h| = \frac{n}{2} - 1$ and $h' = -h$.
\end{lemma}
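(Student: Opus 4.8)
The plan is to translate everything into the vertical differences between $r$ and $r'$ and then apply the compatibility identity of Lemma \ref{H V conditions}. Write $v_j \equiv s_j' - s_j \pmod n$ for the vertical difference in column $j$ (as a residue in $[0,n-1]$) and set $\delta_j = v_j - \frac{n}{2}$. Since $r$ and $r'$ together form a $2\times n$ Latin rectangle of maximum inner distance, the cells $(1,j)$ and $(2,j)$ are adjacent, so $\dist(s_j,s_j') \ge \frac{n}{2}-1$; as $\frac{n}{2}$ is the largest possible distance this forces $v_j \in \{\frac{n}{2}-1,\frac{n}{2},\frac{n}{2}+1\}$, that is $\delta_j \in \{-1,0,1\}$ for every $j$. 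By Lemma \ref{diff row properties}(2) with $k=\frac{n}{2}-1$ we also have $\epsilon_j,\epsilon_j' \in \{-1,0,1\}$, hence each $\epsilon_j'-\epsilon_j$ is an integer in $[-2,2]$.

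Next I would use the square identity $h_{1,j}+v_{1,j+1}\equiv v_{1,j}+h_{2,j}\pmod n$ from Lemma \ref{H V conditions}(1), which rearranges to $h_j'-h_j \equiv v_{j+1}-v_j = \delta_{j+1}-\delta_j \pmod n$ for $1\le j\le n-1$. Since $\epsilon_j'-\epsilon_j = h_j'-h_j$ as integers, and both $\epsilon_j'-\epsilon_j$ and $\delta_{j+1}-\delta_j$ lie in $[-2,2]$ while $n\ge 6$, the congruence is an honest equality: $\epsilon_j'-\epsilon_j = \delta_{j+1}-\delta_j$. Summing telescopes,
\[\sum_{j=j_1}^{j_2}(\epsilon_j'-\epsilon_j) = \delta_{j_2+1}-\delta_{j_1},\]
and since each $\delta$ lies in $\{-1,0,1\}$ this has absolute value at most $2$, which is the first assertion.

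For the claim about $h$ and $h'$ I would use the wrap-around analogue of the square identity, obtained directly from $h_n \equiv s_1-s_n$ and $h_n'\equiv s_1'-s_n'$: namely $h_n'-h_n \equiv (s_1'-s_1)-(s_n'-s_n) \equiv v_1-v_n = \delta_1-\delta_n \pmod n$, so $h'-h \equiv \delta_1-\delta_n \pmod n$ with $\delta_1-\delta_n\in[-2,2]$. Because a Latin row cannot repeat its first and last symbols we have $h_n,h_n'\ne 0$, hence $h,h' \in [1-\frac{n}{2},\,\frac{n}{2}-1]$ and $h'-h\in[2-n,\,n-2]$. Thus either $h'-h = \delta_1-\delta_n$, giving $|h-h'|\le 2$, or $h'-h = (\delta_1-\delta_n)\pm n$; the $+n$ case requires $\delta_1-\delta_n\le -2$, hence $=-2$, and then $h'-h = n-2$, while the $-n$ case requires $\delta_1-\delta_n\ge 2$, hence $=2$, and then $h'-h = 2-n$. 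In either of these two cases the containment $h'-h\in[2-n,n-2]$ is attained with equality, forcing $\{h,h'\}=\{\frac{n}{2}-1,\,1-\frac{n}{2}\}$, i.e.\ $|h|=\frac{n}{2}-1$ and $h'=-h$.

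The only point needing genuine care is the passage from congruence to equality in the middle paragraph; it works precisely because maximum inner distance pins every $\epsilon_j$ and every $\delta_j$ to $\{-1,0,1\}$, and it is worth stating explicitly that $n\ge 6$ is what guarantees no two distinct members of $[-2,2]$ collide mod $n$. The only extra ingredient in the last part is the elementary fact that $s_1\ne s_n$ in a Latin row, which keeps $h$ (and $h'$) strictly inside $(-\frac{n}{2},\frac{n}{2})$ and so limits the ambiguity to a single full period. I expect no serious obstacle beyond this bookkeeping.
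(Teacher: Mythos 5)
Your proof is correct, and it rests on the same engine as the paper's: the compatibility identity $h_{1,j}+v_{1,j+1}\equiv v_{1,j}+h_{2,j}$ from Lemma \ref{H V conditions}, together with the observation that maximum inner distance pins every vertical difference to $\{\frac{n}{2}-1,\frac{n}{2},\frac{n}{2}+1\}$. The difference is in where you convert the mod-$n$ congruence into an integer equality. The paper bounds the whole partial sum $\sum(\epsilon_j'-\epsilon_j)$ at once against $v_{1,j_1}-v_{1,j_2}$, which works for $n\geq 8$ (implicitly by induction on the length of the interval, since a partial sum changes by at most $2$ per step) but breaks for $n=6$, where $2\equiv -4\pmod 6$; the paper then disposes of $n=6$ by a separate case left to the reader. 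You instead upgrade the congruence to equality \emph{term by term}: each $\epsilon_j'-\epsilon_j$ and each $\delta_{j+1}-\delta_j$ lies in $[-2,2]$, and two such integers congruent mod $n\geq 6$ must coincide, so the sum telescopes exactly to $\delta_{j_2+1}-\delta_{j_1}$. This makes the argument uniform in $n$ and eliminates the $n=6$ caveat entirely, which is a genuine improvement in cleanliness. Your treatment of the wrap-around statement is also more explicit than the paper's (which simply cites Lemma \ref{diff row properties}): you correctly use $h,h'\in[1-\frac{n}{2},\frac{n}{2}-1]$ (from $s_1\neq s_n$) to show that the only escape from $|h-h'|\leq 2$ is the extremal case $\{h,h'\}=\{\frac{n}{2}-1,1-\frac{n}{2}\}$. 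No gaps.
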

\begin{proof}
The proof falls immediately from Lemma \ref{H V conditions} Condition 1:
\[h_{i,j} + v_{i, j+1} \equiv v_{i,j} + h_{i+1, j} \pmod{n}\implies v_{i, j+1} - v_{i,j} \equiv h_{i+1, j} - h_{i,j}\pmod{n}\]
Which then generalizes to the following:
\begin{equation}\label{generalized condition 1}
    v_{i, j_1} - v_{i, j_2} \equiv \sum_{j = j_1}^{j_2-1}(h_{i,j} - h_{i+1, j}) = \sum_{j = j_1}^{j_2-1}(\epsilon_{i,j} - \epsilon_{i+1, j}) \pmod{n}
\end{equation}
Where here and from now on $\epsilon_{i,j} = h_{i,j} - \frac{n}{2}$. In our case $i\in[1,2]$. Assuming $d_*$ and $d_*'$ are neighbors means that for all valid $(i,j)$ we have $v_{i,j}\in \{\frac{n}{2}, \frac{n}{2}\pm 1\}$; it follows that $|v_{i, j_1} - v_{i, j_2}| \leq 2$, and the result follows for $n\geq 8$. The final sentence in the lemma follows from Lemma \ref{diff row properties} Rule 1.

Note that $2\equiv -4 \pmod{6}$, so for $n=6$ we may have that the difference of sums is $4$ without contradiction. Note that $|\epsilon_{i, j} - \epsilon_{i+1, j}| \leq 2$ always holds, so we must have a case like:

\begin{figure}[H]
\centering
\begin{tikzpicture}[scale=.6]
\draw(0,0)grid(3,2); 
\foreach\x[count=\i] in{1, \dots , 1}{\node at(\i-0.5,1.5){$\x$};};
\foreach\x[count=\i] in{-1, \dots, -1}{\node at(\i-0.5,0.5){$\x$};};
\end{tikzpicture}
\end{figure}
\noindent where the entries in between ($\dots$) are equal on both rows. I leave to the reader to verify this tiny case is always a contradiction. 
\end{proof}
A sneaky use of the Classification Theorem \ref{classification of paths} improves the result:
\begin{lemma}\label{1s above -1s}
Suppose that $d_* = (\epsilon_{1,1}, \epsilon_{1,2}, \dots, \epsilon_{1, n-1}, h)$ and $d_*' = (\epsilon_{2, 1}, \epsilon_{2,2}, \dots, \epsilon_{2, n-1}, h')$ are neighbors. There is no 1 above a -1 or vice versa, i.e. $\epsilon_{1,j}=\pm1\implies \epsilon_j'\neq \mp1$. Equivalently, the partial sums $\sum_{j=j_1}^{j_2} (\epsilon_{1, j} - \epsilon_{2,j})$ increment by $\pm 1$.
\end{lemma}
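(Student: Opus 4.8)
The plan is to argue by contradiction: suppose $\epsilon_{1,j_0} = 1$ and $\epsilon_{2,j_0} = -1$ (the case $\epsilon_{1,j_0}=-1$, $\epsilon_{2,j_0}=1$ being symmetric by negating both rows). First I would record the basic consequence of Lemma \ref{adj sum lemma}: the partial sums $S(j_1,j_2) = \sum_{j=j_1}^{j_2}(\epsilon_{1,j}-\epsilon_{2,j})$ all lie in $\{-2,-1,0,1,2\}$, and since each single term $\epsilon_{1,j}-\epsilon_{2,j}$ lies in $\{-2,-1,0,1,2\}$, the only way to have a term equal to $\pm 2$ is to land exactly at $\pm 2$, and from $\pm 2$ the next step cannot be another $\pm 2$ in the same direction. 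A column where $\epsilon_{1,j}=1$, $\epsilon_{2,j}=-1$ contributes $+2$; a column with $\epsilon_{1,j}=-1$, $\epsilon_{2,j}=1$ contributes $-2$. So the existence of such a column forces the running partial sums to touch $+2$ (or, after negation, the picture is symmetric). The claim to establish is that \emph{no} $\pm 2$ column can occur at all.

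The key step is to use the Classification Theorem \ref{classification of paths} to pin down what $d_*$ and $d_*'$ can look like near a $+2$ column. At column $j_0$ we have $\epsilon_{1,j_0}=1$, so $d_*$ contains a $1$ in position $j_0$; by the classification, the $1$'s in $d_*$ occur in blocks, and (for a type 1 row with only $0$'s and $1$'s, a type 2 row, or a cycle) those blocks are flanked by $0$'s or sit at the ends. Meanwhile $\epsilon_{2,j_0}=-1$, so $d_*'$ has a $-1$ in position $j_0$, and by the classification its $-1$'s also form blocks flanked by $0$'s or $1$'s, etc. I would then look at the immediate neighbors $j_0-1$ and $j_0+1$ in both rows and use the partial-sum constraint: from the partial sum $=\pm2$ just after column $j_0$, the next term must be $0$ or $-1$ (we cannot go to $+3$, and we cannot stay at $+2$ via another $+2$), which forces $\epsilon_{1,j_0+1}-\epsilon_{2,j_0+1} \le 0$; combined with the fact that in $d_*'$ a $-1$ block is bounded by $0$'s or $1$'s, and in $d_*$ a $1$ block is bounded by $0$'s, one traces out a forced local pattern for both rows. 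The goal is to show this forced pattern propagates: once the two rows disagree by $2$ they are locked into mirror-image behavior (all $1$'s above all $-1$'s, or similar), and then walking to either end of the row makes $v_{i,1}$ and $v_{i,n-1}$ (equivalently the relation on $h$ and $h'$ from Lemma \ref{adj sum lemma}) incompatible with both $d_*$ and $d_*'$ being legal extended difference rows of maximum inner distance — precisely the kind of ``too wide a center'' or end-of-row contradiction that appeared in the proof of Theorem \ref{classification of paths}.

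Concretely, I expect the argument to reduce to a short case analysis on which types $d_*$ and $d_*'$ are (cycle/type 1/type 2), since the classification leaves very few shapes: in each case a $+2$ column means a $1$-block of $d_*$ overlaps a $-1$-block of $d_*'$, and reading off the boundary behavior of both blocks using Rule 2 and Rule 3 of Lemma \ref{disallowed patterns} forces the partial sum to exceed $2$ somewhere, contradicting Lemma \ref{adj sum lemma}. Once no $\pm 2$ column exists, every term $\epsilon_{1,j}-\epsilon_{2,j}$ lies in $\{-1,0,1\}$, which is exactly the statement that the partial sums increment by $\pm 1$ (or $0$), giving the ``equivalently'' clause for free.

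The main obstacle I anticipate is the bookkeeping of the local forced patterns: showing rigorously that a single $\pm 2$ column cannot be ``absorbed'' by a cleverly chosen pair of classified rows, i.e.\ ruling out every way a $1$-block and a $-1$-block could be stacked so that the partial sums still stay within $[-2,2]$ all the way to both ends. This is where I would lean hardest on the classification — the fact that $1$-blocks and $-1$-blocks in a single $d_*$ are separated by $0$'s and cannot coexist in a bounded-by-$0$ window (the displayed fact in Step 2 of the proof of Theorem \ref{classification of paths}) — to keep the number of configurations finite and small, and I would handle the genuinely small orders ($n=6,8$) by direct inspection if the general argument needs $n$ sufficiently large.
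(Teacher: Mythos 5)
Your plan is essentially the paper's own proof: argue by contradiction from a column contributing $\pm 2$, use Rule 1 of Lemma \ref{disallowed patterns} together with Lemma \ref{adj sum lemma} to force zeroes in both rows on either side of that column, then use Rule 2 (or, at the boundary columns $j=1$ and $j=n-1$, the Classification Theorem \ref{classification of paths}) to force the next entries outward into a configuration whose partial sum violates Lemma \ref{adj sum lemma}. The only difference is that the contradiction arrives within two columns of the offending one, so the longer ``propagate to the ends of the row'' phase you anticipate is not needed.
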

\begin{proof}
Suppose that for some column $j$, $\epsilon_{1,j} = 1$ and $\epsilon_{2,j} = -1$. If column $j+1$ exists, then $\epsilon_{1,j+1}\in \{0, 1\}$ and $\epsilon_{2, j+1}\in \{-1, 0\}$. If $\epsilon_{1,j+1} = 1$, no matter the value of $\epsilon_{2,j+1}$, this is a contradiction to Lemma \ref{adj sum lemma}. Likewise $\epsilon_{2,j+1}\neq -1$. We can conclude neighboring values are 0 on either side of $\epsilon_{1,j}$ and $\epsilon_{2,j}$. 

1) Suppose $1 < j < n-1$. Take for example $j=2$, so $d_* = (0, 1, 0, \epsilon_{1,4}, \dots)$ and $d_*' = (0, -1, 0, \epsilon_{2, 4}, \dots)$. There is no way to pick $\epsilon_{1,4}$ and $\epsilon_{2,4}$ in $\{0, \pm 1\}$ without a contradiction to Lemma \ref{adj sum lemma}, or Lemma \ref{disallowed patterns} Rule 2.

2) $j=1$ or $j=n-1$. Take the example $j=1$. Then we have $d_* = (1, 0, \epsilon_{1,3}, \dots)$ and $d_*' = (-1, 0, \epsilon_{2,3}, \dots)$. Neither epsilon term can be zero, so $\epsilon_{1,3} = \pm 1$. But there is no difference row beginning in $\pm(1, 0, 1, \dots)$ by Theorem \ref{classification of paths}! Therefore by Lemma \ref{disallowed patterns} we have $d_* = (1, 0, -1, -1, \dots)$ and 
 $d_*' = (-1, 0, 1, 1, \dots)$. Now columns 3 and 4 contradict Lemma \ref{adj sum lemma}.
\end{proof}

\begin{defn}
Suppose that $d_*$ and $d_*'$ are neighbors. If there exists a pair of columns $(j_1, j_2)$ such that:
\[\left|\sum_{j=j_1}^{j_2-1} (\epsilon_j' - \epsilon_j)\right| = 2\]
then $d_*$ and $d_*'$ are \textbf{determined} by columns $j_1, j_2$. 
\end{defn}
That is: if neighbors $r$ and $r'$ form a $2\times n$ Latin rectangle, there is a unique (i.e. determined) vertical difference $v_{1,j}$ for each column $j$. We will in fact show that every non-equal pair of neighbors is determined, but this nice behavior need not generalize to lower inner distances. The following lemma shows why this matters:

\begin{lemma}\label{unequal neighbors lemma}
Let $L\in \MiDn$. If $d_i$ and $d_{i+1}$ are determined by $j_1, j_2$, with $\epsilon_{i,j} = \epsilon_{i+1, j}$ for $j_1< j < j_2-1$, then row $i$ of the $V$ has the following entries from columns $j_1$ to $j_2$:
\begin{figure}[H]
    \centering
    \begin{tikzpicture}[scale=.6]
    \draw(0,0)grid(5,1); 
    \foreach\x[count=\i] in{1,0, \dots, 0, -1}{\node at(\i-0.5,0.5){$\x$};};
    \node at(0.5, 1.5) {$j_1$};
    \node at(4.5, 1.5) {$j_2$};
\end{tikzpicture}
\end{figure}
\noindent Further, $d_{i-1}$ and $d_{i+2}$ do not equal $d_{i}$ or $d_{i+1}$.
\end{lemma}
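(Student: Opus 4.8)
The plan is to handle the two assertions in turn; the first is a direct computation on the vertical difference matrix, and the second a short case analysis reduced by a vertical-flip symmetry.

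\emph{The vertical differences of row $i$.} Since $L\in\MiDn$, every pair of consecutive rows is a neighbor pair, so each entry of $V$ lies in the width-$2$ window $\{\tfrac n2-1,\tfrac n2,\tfrac n2+1\}$. Equation \eqref{generalized condition 1}, applied to rows $i,i+1$, reads $v_{i,j_1}-v_{i,j_2}\equiv\sum_{j=j_1}^{j_2-1}(\epsilon_{i,j}-\epsilon_{i+1,j})\pmod n$; as the pair is determined by $j_1,j_2$ this integer sum is $\pm 2$, and we may assume it is $+2$ (the value $-2$ gives the same picture with $j_1,j_2$ interchanged). The window bound then upgrades the congruence to the integer equalities $v_{i,j_1}=\tfrac n2+1$ and $v_{i,j_2}=\tfrac n2-1$. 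By Lemma \ref{1s above -1s} each summand $\epsilon_{i,j}-\epsilon_{i+1,j}$ is one of $-1,0,1$; the hypothesis kills every summand with $j_1<j<j_2-1$, so since the total is $2$ the two remaining summands, at $j=j_1$ and at $j=j_2-1$, are each $+1$ (in particular $j_2\geq j_1+2$). Feeding the resulting partial sums back into Equation \eqref{generalized condition 1} column by column gives $v_{i,j}=\tfrac n2$ for all $j_1<j<j_2$, i.e. row $i$ of $V$ runs $\tfrac n2+1,\tfrac n2,\dots,\tfrac n2,\tfrac n2-1$ from column $j_1$ to column $j_2$, which is the displayed pattern $1,0,\dots,0,-1$.

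\emph{The neighboring difference rows.} Reversing the row order of $L$ (a row permutation, hence inner-distance preserving) yields another square of $\MiDn$ satisfying the same hypotheses in which $(d_{i+1},d_i)$ plays the role of $(d_i,d_{i+1})$, while $d_{i+2}$ and $d_{i-1}$ swap roles; so it suffices to prove $d_{i-1}\neq d_i$ and $d_{i-1}\neq d_{i+1}$ whenever row $i-1$ exists. In either case the aim is to force a pattern on row $i-1$ of $V$ and then to exhibit a pair of consecutive entries within one column of $V$ summing to $0$ modulo $n$, contradicting Condition 3 of Lemma \ref{H V conditions}. Suppose $d_{i-1}=d_i$. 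Then rows $i-1,i$ of $H$ are equal, so by Lemma \ref{row prod equivalent} row $i-1$ of $V$ is a constant $\tfrac n2+c$ with $c\in\{-1,0,1\}$; Condition 3 at column $j_1$ gives $c\neq-1$ and at column $j_2$ gives $c\neq 1$, so $c=0$; but then at any interior column $j$ with $j_1<j<j_2$ (one exists since $j_2\geq j_1+2$) we have $v_{i-1,j}+v_{i,j}=\tfrac n2+\tfrac n2\equiv0\pmod n$, the desired contradiction. Suppose instead $d_{i-1}=d_{i+1}$. Then $h_{i-1,j}=h_{i+1,j}$ for all $j$, so $m_{i+1,j}-m_{i-1,j}$, hence $v_{i-1,j}+v_{i,j}$, is the same for every $j$; comparing this common value at $j_1$ and at $j_2$ — where by the first part $v_{i,j_1}=\tfrac n2+1$ and $v_{i,j_2}=\tfrac n2-1$ — and using that $v_{i-1,\cdot}$ also lies in the width-$2$ window, forces $v_{i-1,j_1}=\tfrac n2-1$; then $v_{i-1,j_1}+v_{i,j_1}=(\tfrac n2-1)+(\tfrac n2+1)\equiv 0\pmod n$, again contradicting Condition 3.

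The step I expect to demand the most care is the case $d_{i-1}=d_{i+1}$: it is the only place where an equality of difference rows of \emph{non-adjacent} rows must be converted into a local statement about column $j_1$ of $V$, and one has to verify that the window bounds genuinely force $v_{i-1,j_1}$ to be the mirror value $\tfrac n2-1$ rather than merely allow it (the point being that $v_{i-1,j_1}+v_{i,j_1}$ and $v_{i-1,j_2}+v_{i,j_2}$ must be equal while $v_{i,j_1}$ and $v_{i,j_2}$ differ by $2$, and both $v_{i-1,j_1},v_{i-1,j_2}$ must still fit a width-$2$ window). A secondary, purely clerical hazard is matching the two $d_{i+2}$-statements with the two $d_{i-1}$-statements under the row reversal; keeping the pair labeled as (upper row, lower row) throughout prevents an off-by-one there. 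The rest is the same modular bookkeeping already carried out in Lemmas \ref{adj sum lemma} and \ref{1s above -1s}.
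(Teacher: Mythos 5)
Your proof is correct and follows the paper's argument in all essentials: row $i$ of $V$ is pinned down by Equation \eqref{generalized condition 1} together with the window bound $v_{i,j}\in\{\tfrac n2-1,\tfrac n2,\tfrac n2+1\}$, and each coincidence of difference rows is refuted by producing two consecutive entries of a column of $V$ summing to $0\pmod n$ (the paper phrases this via Lemma \ref{disallowed patterns} Rule 1, you via Condition 3 of Lemma \ref{H V conditions} --- these are the same constraint). The only harmless deviations are that you reduce the $d_{i+2}$ claims to the $d_{i-1}$ claims by reversing the row order, and that in the case $d_{i-1}=d_{i+1}$ you exploit the constancy of $v_{i-1,j}+v_{i,j}$ rather than re-running the determinedness computation on the outer pair of rows as the paper does.
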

\begin{proof}
Assume there exists such $j_1, j_2$. WLOG, we are working with the following assumptions:
\[(h_{i,j_1} - h_{i+1, j_1}) = 1 = (h_{i,j_2-1} - h_{i+1, j_2-1})\qquad h_{i,j} = h_{i+1, j} \text{ for $j_1 < j < j_2-1$}\]
\[\implies v_{i, j_1} - v_{i, j_2} \equiv \sum_{j=j_1}^{j_2-1} (h_{i, j} - h_{i+1, j}) \equiv 2\]
Which immediately gives $v_{i, j_1} = \frac{n}{2} + 1$ and $v_{i, j_2} = \frac{n}{2}-1$ using Equation \ref{generalized condition 1}. The remaining $v_{i,j}$ for $j_1 < j < j_2-1$ follow from the same equation. We show $d_{i+2}\neq d_i$ and $d_{i+2}\neq d_{i+1}$; the same argument holds for $d_{i-1}$.

1) If $d_{i+2} = d_{i}$, the argument above shows that entries $j_1$ to $j_2$ on row $i+1$ of $V$ is of the same form as above, with $-1$ on the left and $1$ on the right instead. Since the columns of $V$ are difference rows, this is a contradiction to Lemma \ref{disallowed patterns} Rule 1. 

2) If $d_{i+2} = d_{i+1}$, then row $i+1$ of $V$ has a constant entry by Lemma \ref{row prod equivalent}. Since row $i$ contained $-1, 0$, and $1$, there is no choice of $v_{i, 1}$ without contradicting Lemma \ref{disallowed patterns} Rule 1.
\end{proof}

The following lemma establishes that type 1 neighboring pairs are determined:

\begin{lemma}\label{type 1 neighbors}
Suppose that $d_*$ is a type 1 difference row of length $n$ with $h$ satisfying $1 \leq h < \frac{n}{2} -1$. The neighbors of $d_*$ are the type 1 $d_*'$ with $h' \in \{h-2, h, h+2\}$, excluding Row $\pm$A, with $h'>0$. If $h = \frac{n}{2}-1$ and $d_*\neq$Row -A, this list includes Row A. Pairs of non-equal type 1 neighbors are determined.
\end{lemma}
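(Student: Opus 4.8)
The plan is to recast the neighbor relation in terms of partial sums and then read the admissible partners off the Classification Theorem~\ref{classification of paths} together with Lemma~\ref{1s above -1s}. By Lemma~\ref{H V conditions} Condition~1, in the form of Equation~\ref{generalized condition 1}, the rows $r,r'$ induced by $d_*=(\epsilon_1,\dots,\epsilon_{n-1},h)$ and $d_*'=(\epsilon_1',\dots,\epsilon_{n-1}',h')$ form a $2\times n$ Latin rectangle of maximum inner distance if and only if there are vertical differences $v_1,\dots,v_n$, each in $\{\frac{n}{2}-1,\frac{n}{2},\frac{n}{2}+1\}$, with $v_{j+1}-v_j=\epsilon_j'-\epsilon_j$ for $1\le j\le n-1$. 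Putting $T_0=0$ and $T_j=\sum_{i=1}^j(\epsilon_i'-\epsilon_i)$, such $v_j$'s exist exactly when $\max_{0\le j\le n-1}T_j-\min_{0\le j\le n-1}T_j\le 2$; then $v_1$ is forced --- i.e.\ the pair is \emph{determined} --- precisely when this range equals $2$, whereas for $d_*=d_*'$ the range is $0$ and all three choices of $v_1$ work, recovering the row product. So the lemma reduces to identifying the allowed $h'$ and, for each candidate, computing this range.

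Since $1\le h<\frac{n}{2}-1$ and $d_*$ is type~1 with $h>0$, Theorem~\ref{classification of paths} gives its shape: a block of leading $1$'s, a center alternating between $0$ and $1$, a block of trailing $1$'s, with exactly $h-1$ zeros and $n-h$ ones among the first $n-1$ coordinates. Let $d_*'$ be a non-equal neighbor. By Lemma~\ref{1s above -1s} every $-1$ of $d_*'$ lies in a column where $d_*$ has a $0$, so $d_*'$ has at most $h-1<\frac{n}{2}-1$ entries equal to $-1$. Reading off Theorems~\ref{classification of cycles} and~\ref{classification of paths}: a type~2 row, a Hamiltonian cycle other than the all-$1$'s cycle (which occurs only for $n=4k$ and is a candidate below), Row~$-$A, and every $h'<0$ type~1 row that is not Row~A each carry at least $\frac{n}{2}-1$ entries equal to $-1$, so none of them can be $d_*'$; and Lemma~\ref{adj sum lemma} excludes Row~A directly, since $h\ne\frac{n}{2}-1$ removes its special branch $h'=-h$ and otherwise $|h-h'|>2$. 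Hence $d_*'$ is again a leading/center/trailing type~1 row with $h'>0$; Lemma~\ref{adj sum lemma} gives $|h-h'|\le2$, and since $d_*'$ must carry the parity that Theorem~\ref{classification of paths} attaches to $n\bmod 4$ --- the parity of $h$ --- we get $h'\in\{h-2,h,h+2\}$, the value $h'=h$ forcing $d_*'=d_*$ by uniqueness of the form. In the $h=\frac{n}{2}-1$ addendum the only change is that Lemma~\ref{adj sum lemma}'s branch $h'=-h$ is now open, so Row~A (with $h'=1-\frac{n}{2}=-h$) joins the candidates, while Row~$-$A stays excluded by Lemma~\ref{1s above -1s} because it would put a $-1$ beneath a $1$ of $d_*$.

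For the reverse inclusion and the ``determined'' claim I would stack $d_*$ against each candidate and read off the $T_j$. Passing from parameter $h$ to $h+2$ drops two $1$'s from each of the leading and trailing blocks and inserts two $(0,1)$-pairs into the center at matching parity, so over the first $n-1$ coordinates $\epsilon_j'-\epsilon_j$ vanishes except at two columns, where it equals $-1$; hence $T_j$ steps down to $-1$, holds, then steps down to $-2$, giving range exactly $2$. The case $h'=h-2$ is the negation of this (two columns of difference $+1$, range $2$ again), and in the addendum the comparison of $d_*$ with Row~A is likewise two columns of difference $-1$, namely the first and the last. So every candidate is a neighbor, and every non-equal one is determined; that each candidate really is a maximum-inner-distance row is already part of Theorem~\ref{classification of paths}.

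The step I expect to be the main obstacle is the elimination in the middle paragraph: one must verify carefully, from the explicit forms in Theorems~\ref{classification of cycles} and~\ref{classification of paths}, that ``at most $h-1$ many $-1$'s'' really does knock out \emph{every} partner except the leading/center/trailing type~1 rows --- in particular the Hamiltonian cycles and the all-$(\pm1)$ rows that occur only for $n=4k$ --- and the hypothesis $h<\frac{n}{2}-1$ is exactly what makes this go through: its failure at $h=\frac{n}{2}-1$ is precisely what lets Row~A in as the extra neighbor. The partial-sum computation in the last paragraph is routine once the two rows are stacked, but it is the one place where one confirms the non-obvious fact that $T_j$ never makes two consecutive swings of size $2$.
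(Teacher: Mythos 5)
Your proposal reaches the correct conclusion and rests on the same three ingredients as the paper's proof --- Lemma \ref{1s above -1s}, the Classification Theorem \ref{classification of paths}, and Lemma \ref{adj sum lemma} --- but organizes the elimination differently and is considerably more explicit about ``determined.'' The paper argues: $d_*$ has no two consecutive $0$'s, so by Lemma \ref{1s above -1s} a neighbor has no two consecutive $-1$'s; the only type 2 rows without consecutive $-1$'s have their $-1$'s confined to columns $1$ and/or $n-1$, where $d_*$ has $1$'s; hence $d_*'$ is type 1, and $|h-h'|\le 2$ finishes. You instead bound the \emph{total} number of $-1$'s in a neighbor by $h-1$. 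Your partial-sum reformulation (neighbors iff the range of $T_j$ is at most $2$, determined iff it equals $2$) and the explicit verification for $h'=h\pm 2$ and for Row A are a genuine improvement on the paper's one-line ``$j_1=1$, $j_2=n$.''

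One step in your elimination is misstated, though the conclusion survives. It is not true that every type 2 row carries at least $\frac{n}{2}-1$ entries equal to $-1$: that count holds for type 2 rows with $h'>0$, but the negated ones ($h'<0$) carry only $\frac{n}{2}-|h'|-1$ many $-1$'s, which can be as small as $1$. Those rows must be excluded by Lemma \ref{adj sum lemma} instead: $h\ge 1$ and $|h-h'|\le 2$ force $h'\ge -1$, and a type 2 row with $|h'|\le 1$ is a Hamiltonian cycle, which you treat separately. Likewise a rotation of Row C whose last entry is $-1$ has only $\frac{n}{2}-2$ many $-1$'s among its first $n-1$ coordinates, so the inequality you actually need there is $h-1<\frac{n}{2}-2$, which is precisely the hypothesis $h<\frac{n}{2}-1$ --- consistent with your own closing remark. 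With these two patches the argument is sound.
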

\begin{proof}
Type 1's by definition contain only 0's and 1's (for $h > 0$. The negative case is identical). Since $d_*$ does not contain two 0's in a row, by Lemma \ref{1s above -1s}, a neighbor $d_*'$ does not contain two consecutive -1's. By Theorem \ref{classification of paths}, the only type 2's without two consecutive -1's have their -1's at entries 1 and/or $n-1$, whereas $d_*$ has 1's at these entries, so Lemma \ref{1s above -1s} gives that $d_*'$ is thus type 1. Lemma \ref{adj sum lemma} states $|h - h'| \leq 2$. If for example $h' = h+2$, then $j_1 = 1$ and $j_2 = n$ is a determined pair.

The same argument holds for $d_* = (1, 1, \underbrace{0, 1, \dots, 1, 0}_{\text{alternating}}, 1, 1, \frac{n}{2} - 1)$, so that all of its neighbors are type 1, except nothing rules out Row A from being a neighbor, and in fact they can be neighbors as one may check.
\end{proof}

\begin{thm}\label{row prod with type 1}
Suppose $L\in \MiDn$, and row $i$ of $L$ has a type 1 extended difference row $d_i\neq$Row A. Then $L$ is a product of rows.
\end{thm}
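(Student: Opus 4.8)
The plan is to show that every row of $H$ equals $d_i$; by Lemma \ref{row prod equivalent} this is precisely the assertion that $L$ is a row product. Suppose not. Sliding along the rows, there is then an index (relabel it $i$) with $d_i$ type 1, $d_i\neq$ Row A, and $d_{i+1}\neq d_i$ (the case $d_{i-1}\neq d_i$ is symmetric). By Lemma \ref{type 1 neighbors} the neighbour $d_{i+1}$ is again a type 1 row, and being distinct from $d_i$ the pair $(d_i,d_{i+1})$ is determined. A short refinement of Lemma \ref{type 1 neighbors} using Lemma \ref{1s above -1s} shows the only non-equal type 1 neighbour pairs are the standard ones whose $h$-values differ by $2$, together with the pair $\{\text{Row A},\ \text{std type 1 with }h=\tfrac n2-1\}$ and its negation; all three kinds will be handled.

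Next I would read off a convenient determined pair. Comparing the explicit shapes in Theorem \ref{classification of paths}, two distinct type 1 neighbours differ in only two columns --- the places where the leading/centre/trailing blocks of $1$'s change length, or columns $1$ and $n-1$ in the Row $\pm$A pair --- so there is a determined pair $(j_1,j_2)$ with $\epsilon_{i,j}=\epsilon_{i+1,j}$ for $j_1<j<j_2-1$, and Lemma \ref{unequal neighbors lemma} applies. Chasing Equation \ref{generalized condition 1} across all columns upgrades its conclusion: the whole of row $i$ of $V$ is a monotone three-block --- a run of $\tfrac n2{+}1$'s, then a run of $\tfrac n2$'s, then a run of $\tfrac n2{-}1$'s, or the reverse --- with nonempty end-blocks, so $v_{i,1}\neq\tfrac n2$ and $v_{i,1}-v_{i,n}\equiv h^{(i+1)}-h^{(i)}\pmod n$ is $\pm2$. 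Moreover the lemma gives $d_{i-1},d_{i+2}\notin\{d_i,d_{i+1}\}$; in particular $d_{i-1}\neq d_i$.

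Now I would propagate in both directions. Since $d_{i-1}\neq d_i$ are distinct type 1 neighbours, Lemma \ref{unequal neighbors lemma} applies to $(i-1,i)$ and yields $d_{i-2}\neq d_{i-1}$; likewise $d_{i+1}\neq d_{i+2}$ yields $d_{i+3}\neq d_{i+2}$, and so on, so inductively $d_j\neq d_{j+1}$ for every $j\in\{1,\dots,n-1\}$. By Lemma \ref{row prod equivalent} (the equivalence $h_{i,j}=h_{i+1,j}\iff v_{i,j}=v_{i,j+1}$) every row of $V$ is then non-constant, hence a monotone three-block, hence $v_{j,1}\neq\tfrac n2$ for all $j$. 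But column $1$ of $V$ is the extended difference row of column $1$ of $L$, a Latin column of maximum inner distance, so by Lemma \ref{disallowed patterns} Rule 1 it has no $\tfrac n2{+}1$ adjacent to a $\tfrac n2{-}1$; since none of its entries equals $\tfrac n2$, all of them coincide, say all $\tfrac n2{+}1$. This forces $h^{(j+1)}-h^{(j)}=2$ at every standard-pair step, and summing over $j=1,\dots,n-1$ gives $h^{(n)}-h^{(1)}=2(n-1)$, impossible since $|h^{(\cdot)}|\le\tfrac n2-1$. A Row $\pm$A step cannot rescue the count: Row A's only non-trivial type 1 neighbour is the standard row with $h=\tfrac n2-1$ (symmetrically for Row $-$A), so if some $d_j$ equals Row A then $d_{j-1}=d_{j+1}=$ (std, $h=\tfrac n2-1$), contradicting $d_{j-1}\neq d_{j+1}$; and if Row A sits at an end of the column, the forced sign of the $h$-steps drives $h^{(\cdot)}$ out of $[-(\tfrac n2-1),\tfrac n2-1]$ within $\tfrac n2$ steps. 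This contradiction shows $d_{i+1}=d_i$; symmetrically $d_{i-1}=d_i$. Since $d_{i\pm1}$ are then themselves type 1 $\neq$ Row A, induction along the rows gives $d_1=\dots=d_n$, so $L$ is a row product.

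The main obstacle is the case bookkeeping in the last two steps: nailing down exactly which type 1 rows are mutual neighbours (Row A and Row $-$A are delicate, and Lemma \ref{type 1 neighbors} must be combined with Lemma \ref{1s above -1s}), checking that every admissible non-equal pair meets the hypothesis of Lemma \ref{unequal neighbors lemma}, and extracting the numerical contradiction once the first column of $V$ is pinned down. The small orders $n=6,8,10$, where several standard type 1 rows coincide or fail to exist (so that quantities such as $\tfrac n2-3$ are non-positive), will need to be dispatched separately by direct inspection.
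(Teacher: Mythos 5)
Your generic mechanism is sound but takes a genuinely different route from the paper's. You pin down column $1$ of $V$ globally (all entries equal, hence the $h$-values drift monotonically by $2$ across all $n-1$ row-pairs, overflowing the range $[-(\tfrac{n}{2}-1),\tfrac{n}{2}-1]$), whereas the paper argues locally: a determined pair of type 1 neighbours forces the middle block of row $i$ of $V$ to contain column $\tfrac{n}{2}$, so $v_{i,\frac{n}{2}}=\tfrac{n}{2}$, and since Lemma \ref{unequal neighbors lemma} forces $d_{i+2}\neq d_{i+1}$ the same holds for $v_{i+1,\frac{n}{2}}$, putting two adjacent ``zeroes'' in column $\tfrac{n}{2}$ of $V$ and violating Lemma \ref{disallowed patterns} Rule 1 after only three rows. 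Both mechanisms are valid on chains of standard type 1 pairs; the paper's has the advantage of never needing the chain to extend past three rows.

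The genuine gap is in your Row A step. You assert that if some $d_j$ equals Row A then $d_{j-1}$ and $d_{j+1}$ must both be the standard type 1 with $h=\tfrac{n}{2}-1$, yielding a contradiction with $d_{j-1}\neq d_{j+1}$. That is false: by Lemma \ref{type 2 neighbors, Rows A and C}, Row A also has three type 2 neighbours (both type 2's with $h=2-\tfrac{n}{2}$ and the one with $m'=1$, $h=3-\tfrac{n}{2}$), so once the chain reaches the type 1 with $h=\tfrac{n}{2}-1$ and then Row A, the next row may be type 2. At that point both structural facts your counting rests on fail: the corresponding row of $V$ is no longer a monotone three-block, and the integer $h$-increment is no longer $\pm 2$ (Row A's $h=1-\tfrac{n}{2}$ differs from $\tfrac{n}{2}-1$ by $2-n$ and from its type 2 neighbours by $1$ or $2$), so the telescoped total $h^{(n)}-h^{(1)}=2(n-1)$ is not forced. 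This leakage into type 2 territory is exactly why the paper splits off the case $h=\tfrac{n}{2}-1$ with $d_{i+1}=\text{Row A}$ and resolves it separately in Lemma \ref{row prod with Row A} by a dedicated three-row analysis of Row A against its type 2 neighbours; your proof needs an argument of that kind (or must fall back on the paper's local two-zeros contradiction, which closes the generic case before the chain can escape) before it is complete.
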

\begin{proof}
We prove first the case where $\epsilon_{i,n}\leq \frac{n}{2} - 3$, and leave the $\epsilon_{i,n}=\frac{n}{2}-1$ cases for later. We may assume $i\leq \frac{n}{2}$ by flipping the Latin square upside down.

Suppose $d_i$ is type 1 with $\epsilon_{i,n}\leq \frac{n}{2} - 3$. If $d_1 = d_2 = \dots = d_n$, we are done. Otherwise by Lemma \ref{type 1 neighbors}, if $d_{i+1} \neq d_i$ then $\epsilon_{i+1, n} = \epsilon_{i, n}\pm 2$ and $d_{i+1}$ is a type 1 other than Row A. Then Lemma \ref{type 1 neighbors} says $d_{i+2}$ is still a type 1, possibly Row A.

Assume WLOG $\epsilon_{i+1, n} = \epsilon_{i, n} + 2$. Observe that $d_i$ and $d_{i+1}$ have the same entries everywhere except columns $j_1 = \frac{n}{2} - \epsilon_{i, n}$ and $j_2 - 1 = \frac{n}{2} + \epsilon_{i, n}$, where $h_{i, j_1} = 1 = h_{i, j_2}$ and $h_{i+1, j_1} = 0 = h_{i+1, j_2}$. It follows from Lemma \ref{adj sum lemma} that row $i$ of $V$ is $(1, 1, \dots, 1, 0, \dots, 0, -1, \dots, -1)$: the center $j_2 - 1 - j_1\geq 1$ entries are zero, to the left is all 1's and the right is all -1's. In particular $v_{i, \frac{n}{2}}=0$. Repeating for $d_{i+1}$ and $d_{i+2}$ gives $v_{i+1, \frac{n}{2}} = 0$. This places two adjacent zeroes within column $\frac{n}{2}$ of $V$, a contradiction.

See the following example of neighboring pairs of type 1's with $\epsilon_{i, n}=1$ and $\epsilon_{i+1, n} = 3$ for $n = 8$.
\begin{figure}[H]
\centering
\begin{tikzpicture}[scale=.6]
    \draw(0,0)grid(8,2); 
    \draw[ultra thick](2, 0)rectangle(5, 2);
    \foreach\x[count=\i] in{ ,  ,j_1,  ,j_{2}-1,  ,  ,  }{\node at(\i-0.5,2.5){$\x$};};
    \foreach\x[count=\i] in{1, 1, 1, 1, 1, 1, 1, 1}{\node at(\i-0.5,1.5){$\x$};};
    \foreach\x[count=\i] in{1, 1, 0, 1, 0, 1, 1, 3}{\node at(\i-0.5,0.5){$\x$};};
    
    \node at(9.5, 1.5) {$\implies$};
    
    \draw(11,1)grid(19,2); 
    \draw[ultra thick](13,1)rectangle(17,2);
    \foreach\x[count=\i] in{,  ,j_1,  ,  ,j_2,   ,   }{\node at(\i+10.5,2.5){$\x$};};
    \foreach\x[count=\i] in{1, 1, 1, 0, 0, -1, -1, -1}{\node at(\i+10.5,1.5){$\x$};};
\end{tikzpicture}
\caption{Neighboring non-equal type 1 rows (left) give the resulting row of $V$ (right). Therefore there cannot exist a $3\times n$ rectangle with type 1 rows and maximum inner distance.}
\end{figure}

There is one more case to check: $d_i = (1,1, 0, \dots, 0, 1, 1, \frac{n}{2} - 1)$ with $d_{i+1} = (0, 1, 0, \dots, 0, 1, 0, 1 - \frac{n}{2}) =$ Row A. Lemma \ref{unequal neighbors lemma} says row $i$ of $V$ is exactly $(1, 0, \dots, 0, -1)$, all zeroes except the outer 2 entries. If all neighbors of Row A are determined, the latter case is proven. We verify this fact next. 
\end{proof}

Row A, $d_* = \pm(0, 1, 0, 1, \dots, 1, 0, 1-\frac{n}{2})$, has 5 neighbors in general: itself, the type 1 shown above with $h = \frac{n}{2} - 1$, and three type 2's. Two type 2 neighbor examples are shown below for $n=6$ and $n=8$:
\begin{figure}[H]
    \centering
\begin{tikzpicture}[scale=.6]
    \draw(0,0)grid(6,2); 
    \foreach\x[count=\i] in{1, 4, 2, 5, 3, 6}{\node at(\i-0.5,1.5){$\x$};};
    \foreach\x[count=\i] in{4, 1, 5, 3, 6, 2}{\node at(\i-0.5,0.5){$\x$};};
\end{tikzpicture}
\quad
\begin{tikzpicture}[scale=.6]
    \draw(0,0)grid(8,2); 
    \foreach\x[count=\i] in{1, 5, 2, 6, 3, 7, 4, 8}{\node at(\i-0.5,1.5){$\x$};};
    \foreach\x[count=\i] in{6, 1, 5, 2, 7, 4, 8, 3}{\node at(\i-0.5,0.5){$\x$};};
\end{tikzpicture}\\\vspace{0.1in}
\begin{tikzpicture}[scale=.6]
    \draw(0,0)grid(5,2); 
    \foreach\x[count=\i] in{0, 1, 0, 1, 0 }{\node at(\i-0.5,1.5){$\x$};};
    \foreach\x[count=\i] in{0, 1, 1, 0, -1}{\node at(\i-0.5,0.5){$\x$};};
\end{tikzpicture}
\qquad
\begin{tikzpicture}[scale=.6]
    \draw(0,0)grid(7,2); 
    \foreach\x[count=\i] in{0, 1, 0, 1, 0, 1, 0}{\node at(\i-0.5,1.5){$\x$};};
    \foreach\x[count=\i] in{-1, 0, 1, 1, 1, 0, -1}{\node at(\i-0.5,0.5){$\x$};};
\end{tikzpicture}
    \label{alt 0,1 counterex}
    \caption{Two Latin rectangles (top) of maximum inner distance and their horizontal difference matrices (bottom). The top difference rows are Row A, while the bottom rows are type 2.}
\end{figure}

Recall that type 2 rows $d_*$ depend only on two values, $(m,h)$, with $0\leq m\leq \frac{n}{2} - h - 1$ denoting the number of lead $\pm1$'s, and $1\leq h\leq\frac{n}{2} - 2$ the final entry in $d_*$.

\begin{lemma}\label{type 2 neighbors, Rows A and C}
\begin{enumerate} The following list describes all non-type 1 neighbors, all of which are determined:
    \item The neighbors of Row A are itself, the type 1 with $h = \frac{n}{2} - 1$, both type 2's with $h = 2 - \frac{n}{2}$, and the type 2 with $m = 1$ and $h = 3-\frac{n}{2}$.
    \item The neighbors of Row C are itself and its two rotations by $\pm 1$.
    \item Suppose $d_*$ is a type 2 extended difference row, determined by $(h,m)$. If $d_*'$ is a type 2 neighbor determined by $(h', m')$, then $|m - m'| \leq 1$.
\end{enumerate}
\end{lemma}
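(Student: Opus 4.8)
The plan is to handle all three parts by the method already used for Lemmas \ref{type 1 neighbors} and \ref{unequal neighbors lemma}: fix one of the two extended difference rows, use Lemma \ref{1s above -1s} to pin down where the other row may place its $\pm 1$ entries relative to the fixed pattern, use Lemma \ref{adj sum lemma} to bound $|h-h'|$ and to confine the partial sums $\sigma_j := \sum_{l=1}^{j}(\epsilon_l'-\epsilon_l)$ to a window of width $2$, and then invoke the Classification Theorem \ref{classification of paths} to narrow the neighbor down to a short finite list. For each surviving candidate I would exhibit a pair of columns $(j_1,j_2)$ with $\big|\sum_{j=j_1}^{j_2-1}(\epsilon_j'-\epsilon_j)\big| = 2$, which is precisely what it means for the pair to be determined; conversely, each row appearing in the three lists is checked to complete the fixed row to a $2\times n$ Latin rectangle of maximum inner distance, a finite verification of the partial-sum conditions of Lemma \ref{H V conditions} that I would leave to the reader, as elsewhere in the paper.

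The claim $|m-m'|\le 1$ is the quickest and I would prove it first. Negating both rows if needed, assume $h>0$, so by Theorem \ref{classification of paths} the row $d_*$ begins $(\underbrace{1,\dots,1}_{m},\,0,\,\underbrace{-1,\dots,-1}_{m+1},\dots)$. If $d_*'$ is type 2 with $h'>0$ and parameter $m'$, it begins $(\underbrace{1,\dots,1}_{m'},0,\dots)$; were $m'\ge m+2$, column $m+2$ would lie inside the block of $m+1$ consecutive $-1$'s of $d_*$ yet inside the lead block of $1$'s of $d_*'$, a $1$ directly above a $-1$, contradicting Lemma \ref{1s above -1s}. Hence $m'\le m+1$, symmetrically $m\le m'+1$, so $|m-m'|\le 1$; the pair is determined because when $m'=m+1$ one already gets $\sigma_{m+2}=2$. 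The only other possibility is $h'<0$, in which case Lemma \ref{adj sum lemma} gives $|h|+|h'|=|h-h'|\le 2$ (its exceptional clause needs $|h|=\tfrac n2-1$, impossible for a type 2 row), forcing $h=1$, $h'=-1$, so both rows are Hamiltonian cycles and $|m-m'|\le 1$ is read off from the explicit rotation-of-Row-C forms of the $h=\pm1$ cycles.

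For the neighbors of Row C $= (\underbrace{1,\dots,1}_{n/2-1},0,\underbrace{-1,\dots,-1}_{n/2-1},0)$, the two long blocks make Lemma \ref{1s above -1s} confine a neighbor's $-1$'s to the columns where Row C is nonpositive and its $1$'s to the columns where Row C is nonnegative, while Lemma \ref{adj sum lemma} forces $|h'|\le 2$ and (via the partial sum over each half-block) allows at most two $0$'s of the neighbor inside either block. Matching this against Theorem \ref{classification of paths} eliminates every type 1 and type 2 row, so the neighbor is a Hamiltonian cycle; $\pm(1,\dots,1)$ is ruled out by the same localization, and following the cyclic block of $1$'s shows only the rotations of Row C by $0$ and $\pm1$ keep both blocks inside the admissible columns. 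For the neighbors of Row A, fix the ``$+$'' form $(0,1,0,1,\dots,1,0,1-\tfrac n2)$, whose nonzero entries sit exactly in the even columns; then Lemma \ref{1s above -1s} forces every $-1$ of a neighbor into an odd column, so those $-1$'s are pairwise non-adjacent. This excludes every Hamiltonian cycle except $(1,\dots,1)$ and every type 2 row with $h>0$ (each has a $-1$-run of length $\ge 2$, hence an even column with a $-1$, or a lone $-1$ pinned to the even column $m+2$), while $(1,\dots,1)$ itself violates the partial-sum bound against Row A. Since $|h|=\tfrac n2-1$, Lemma \ref{adj sum lemma} restricts $h'$ to $\{1-\tfrac n2,\ 2-\tfrac n2,\ 3-\tfrac n2,\ \tfrac n2-1\}$, leaving only finitely many type 1 and negated-type-2 candidates; running through them --- the boundary-parameter ones decided by the same ``$-1$-run hitting an even column'' obstruction --- yields exactly the five rows in the statement, each determined by a $\pm2$ partial sum near the first column where it disagrees with Row A.

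The main obstacle is the bookkeeping in the last two parts: one must be certain that Lemmas \ref{1s above -1s} and \ref{adj sum lemma} together with the Classification Theorem genuinely exclude \emph{every} unlisted extended difference row, and this is most delicate at the extreme parameter values, where the classification forms collapse. The small orders also want separate, routine attention: Lemma \ref{adj sum lemma}'s window can widen at $n=6$ (its own proof notes $2\equiv -4\pmod 6$), and at $n=8$ some of the ``type 2'' rows named in the statement degenerate into Hamiltonian cycles --- which is what the phrase ``in general'' is protecting against --- so the cases $n=6$ and $n=8$ should be dispatched by hand.
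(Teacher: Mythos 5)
Your proposal is correct and follows essentially the same route as the paper's proof: both arguments use Lemma \ref{1s above -1s} and Lemma \ref{adj sum lemma} together with the Classification Theorem \ref{classification of paths} to whittle the candidate neighbors of Row A, Row C, and a general type 2 row down to the stated lists, and both certify determinedness by exhibiting a partial sum of $\pm 2$ at the columns where the two rows first disagree, deferring the converse finite verifications to the reader. The differences are cosmetic --- you derive the ``no consecutive $-1$'s'' constraint for neighbors of Row A from the odd/even column positions rather than quoting it directly, and you flag the $n=6,8$ degeneracies that the paper covers only with the phrase ``in general.''
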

\begin{proof}
Throughout the proof $d_*, h$, and $m$ will denote the extended difference row, its final entry, and (if $d_*$ is a type 2) how many lead $\pm1$'s there are, respectively, for the row we wish to find the neighbors of. Similarly $d_*', h', m'$ denote the same for the rows we wish to be the neighbors of $d_*$.

[Proof of 1]: Let $d_* =$ Row A, so $h=1-\frac{n}{2}$. Lemma \ref{type 1 neighbors} says the type 1 row with $h' = \frac{n}{2} - 1$ is a neighbor of Row A. Since Row A contains only 0's and 1's, all of its neighbors have no consecutive -1's by Lemma \ref{adj sum lemma}. The only remaining rows that satisfy this are type 2's with $h' \in \{2 - \frac{n}{2}, 3 - \frac{n}{2}\}$. For $h' = 2-\frac{n}{2}$, $m'$ ranges in $[0,1]$, and both of these values work as one may check. For $h' = 3 - \frac{n}{2}$, $m'$ ranges in $[0, 2]$ but only $m' = 1$ works as the others contain 2 consecutive -1's. In each of these examples, the first two entries in Row A are $(0, 1)$, and the last two (excluding $h$) entries are $(1, 0)$. For the type 2 neighbors, either the first two entries are $(-1, 0)$, or the last two entries are $(0, -1)$, or both. In any case, these are determined. 

[Proof of 2]: Let $d_* =$Row C, so $h = 0$, and Lemma \ref{adj sum lemma} gives that $h'\in [-2, 2]$. $h'\in [-1,1]$ are the cycles. Row A and Row C are not neighbors by the proof of 1, so Lemma \ref{type 1 neighbors} says $d_*'$ must be a type 2. Row C starts with $\frac{n}{2} - 1 = m$ lead 1's; let $m'$ denote the number of lead 1's in $d_*'$. But Theorem \ref{classification of paths} says $m' \leq \frac{n}{2} - 1 - h'$, and following these 1's are 0, -1, -1. If $|m-m'| \geq 2$, this places a 1 above a -1, contradicting Lemma \ref{1s above -1s}. Therefore $|m-m'|\leq 1$. Therefore $d_*'$ is the rotation by $\pm 1$. To see these are determined, see the following example for $n=6$:
\begin{figure}[H]
    \centering
\begin{tikzpicture}[scale=.6]
    \draw(0,0)grid(6,2); 
    \draw[step = 2, ultra thick](1, 0)rectangle(3,2);
    \foreach\x[count=\i] in{1, 1, 0, -1, -1, 0}{\node at(\i-0.5,1.5){$\x$};};
    \foreach\x[count=\i] in{1, 0, -1, -1, 0, 1}{\node at(\i-0.5,0.5){$\x$};};
\end{tikzpicture}
\label{m - m' = 1}
\caption{Row C (top row) adjacent to its rotation by -1 (bottom row) for $n = 6$. If $m, m'$ denote the number of lead 1's, note that $m = 2$ and $m' = 1$, so that $|m - m'| = 1$, inducing a $(1,0)$ over a $(0,-1)$.}
\end{figure}

[Proof of 3]: We generalize the argument from the proof of 2: if $d_*$ and $d_*'$ start in say $m$ and $m'$ lead 1's, followed by 0 and -1 on both rows, then $|m - m'|\leq 1$ just as in the proof of Statement 1. For if $m \geq m' + 2$, Entries $m' + 1$ and $m' + 2$ of $d_*$ are $(1,1)$ while these entries in $d_*'$ are $(0, -1)$, contradicting \ref{adj sum lemma}. 

Specifically, since one of $h \neq h'$ or $m \neq m'$ is true, $d_*$ and $d_*'$ have either a different number of 1's at the start or a different number of 1's at the end. Where the difference occurs there is a pair of columns $j_1, j_2$ that make the pair determined. 
\end{proof}

\begin{thm}\label{determined theorem}
Every pair of non-equal neighbors is determined. Further, $L\in\MiDn$ is a row product if and only if there exists a pair of consecutive rows with the same difference row.
\end{thm}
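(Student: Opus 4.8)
The plan is to split the statement into its two halves: (i) every non-equal pair of neighbors is determined, and (ii) $L\in\MiDn$ is a row product if and only if some consecutive pair of rows shares a difference row. Half (i) is essentially bookkeeping over the neighbor classification already in hand, while half (ii) is a short propagation argument driven by Lemma~\ref{unequal neighbors lemma}.

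For (i), I would invoke Theorem~\ref{classification of paths}: up to negation (a symbol permutation, hence harmless for both ``neighbor'' and ``determined''), every extended difference row of maximum inner distance is a type~1 row (this family contains Row~A), a type~2 row, or an $h=0$ cycle, and Theorem~\ref{classification of cycles} shows the last is just $\pm$Row~C. I would then walk through the unordered pairs of these three classes. A non-equal (type~1, type~1) pair, including the subcase where one row is Row~$\pm$A, is exactly Lemma~\ref{type 1 neighbors}. A (type~2, type~2) pair is Lemma~\ref{type 2 neighbors, Rows A and C}(3). A (type~1, type~2) pair can only occur with the type~1 row equal to Row~$\pm$A, since Lemma~\ref{type 1 neighbors} says a non-Row-A type~1 has only type~1 neighbors; then Lemma~\ref{type 2 neighbors, Rows A and C}(1) names the neighbors of Row~A and states they are all determined. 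Finally any pair meeting $\pm$Row~C is covered by Lemma~\ref{type 2 neighbors, Rows A and C}(2), which says Row~C's only neighbors are itself and its two $\pm1$ rotations, all determined. This exhausts the cases, giving (i).

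For (ii), the forward direction is immediate from Lemma~\ref{row prod equivalent}: a row product has every row of $H$ equal to the first, so in particular $d_1=d_2$. For the converse, suppose $d_i=d_{i+1}$. I claim $d_{i+1}=d_{i+2}$ whenever row $i+2$ exists: otherwise $d_{i+1}$ and $d_{i+2}$ are non-equal neighbors, hence determined by (i), and after passing to a minimal determining pair $(j_1,j_2)$---which by Lemma~\ref{1s above -1s} has equal $\epsilon$-columns strictly between $j_1$ and $j_2-1$, so the hypothesis of Lemma~\ref{unequal neighbors lemma} is met---Lemma~\ref{unequal neighbors lemma} applied to $(d_{i+1},d_{i+2})$ forces $d_i\notin\{d_{i+1},d_{i+2}\}$, contradicting $d_i=d_{i+1}$. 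The mirror argument downward gives $d_{i-1}=d_i$ whenever row $i-1$ exists. Iterating up and down the square yields $d_1=d_2=\cdots=d_n$, and then Lemma~\ref{row prod equivalent} says $L$ is a row product.

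The hard part will be making the case analysis in (i) genuinely airtight: the classification produces several near-duplicate rows (for instance the two type~1 rows with $|h|=\tfrac{n}{2}-1$, Row~$\pm$A and the ``dense'' one), and I must check that reducing an arbitrary representative to Row~A, Row~C, or a generic type~1/type~2 row via negation really preserves both ``neighbor'' and ``determined,'' and that the quoted lemmas were proved for exactly the representatives I reduce to. A secondary point to pin down is the alignment clause in the hypothesis of Lemma~\ref{unequal neighbors lemma}, which I would justify once via the minimal-determining-pair observation above, or else read off the explicit forms in Theorem~\ref{classification of paths} case by case. Everything else---the propagation in (ii)---is routine once Lemma~\ref{unequal neighbors lemma} is available.
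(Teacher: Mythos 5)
Your proposal is correct and follows essentially the same route as the paper: part (i) is the same appeal to Lemmas \ref{type 1 neighbors} and \ref{type 2 neighbors, Rows A and C} (you merely make the case split over the classification explicit), and part (ii) is the same propagation of $d_i=d_{i+1}$ up and down the square via Lemma \ref{unequal neighbors lemma}. Your extra care about verifying the alignment hypothesis of Lemma \ref{unequal neighbors lemma} via a minimal determining pair is a detail the paper glosses over, but it does not change the argument.
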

\begin{proof}
Lemmas \ref{type 1 neighbors} and \ref{type 2 neighbors, Rows A and C} show that every pair of non-equal neighbors is determined.

The forward direction is by definition. The fact that the backwards direction holds is surprising and powerful. Lemma \ref{unequal neighbors lemma} shows that if $d_i = d_{i+1}$, then $d_{i+1}$ and $d_{i+2}$ cannot be a determined pair of neighbors, but now this means $d_{i+1} = d_{i+2}$. The same holds for row $i-1$, and every other row. Therefore $L$ is a row product.
\end{proof}

We have made great progress! Just a few more computations remain, hang in there.

\begin{lemma}\label{row prod with Row A}
Suppose $L\in \MiDn$. If $L$ contains Row A, then $L$ is a product of rows. 
\end{lemma}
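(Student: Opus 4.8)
The plan is to argue by contradiction. Suppose $L\in\MiDn$ contains Row A --- say $d_i=$~Row A --- but $L$ is not a row product. By Theorem~\ref{determined theorem} this means no two consecutive rows of $L$ share a difference row, and since consecutive rows are neighbors, each consecutive pair is determined. As Row A occurs in some row and $n\ge 6$, after flipping $L$ upside down if necessary I may assume $i\le n-1$, so that $d_{i+1}$ exists and $d_{i+1}\ne d_i$.

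The first step is to identify $d_{i+1}$. By Lemma~\ref{type 1 neighbors} together with part (1) of Lemma~\ref{type 2 neighbors, Rows A and C}, a neighbor of Row A is Row A itself, the type 1 row with $h=\frac n2-1$, or one of the three named type 2 rows. It is not Row A. If it is the type 1 row with $h=\frac n2-1$, then row $i+1$ holds a type 1 difference row other than Row A, so Theorem~\ref{row prod with type 1} applied to row $i+1$ makes $L$ a row product, a contradiction. (This also discharges the single case postponed at the end of the proof of Theorem~\ref{row prod with type 1}, since we now know every neighbor of Row A is determined.) Hence $d_{i+1}$ is one of the three type 2 rows.

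The second step is a short computation. For each of the three type 2 neighbors $d_*'$ of Row A, the column-by-column difference between Row A and $d_*'$ is zero except on a short block of entries sitting next to column $1$ or next to column $n-1$, so by Lemma~\ref{unequal neighbors lemma} the row of $V$ that separates Row A from $d_*'$ equals $\frac n2$ in every column apart from a few columns clustered at the two ends; if the type 2 row lies above Row A the same shape occurs with its signs reversed. Now assume $2\le i\le n-1$, so Row A has two neighbors $d_{i-1}$ and $d_{i+1}$, which are distinct by Lemma~\ref{unequal neighbors lemma}; by the first step each of them is Row A (impossible), the type 1 (impossible), or a type 2, so both are type 2. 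Then rows $i-1$ and $i$ of $V$ are each $\frac n2$ away from a few end columns, and since $n\ge 6$ some column $c$ avoids both exceptional sets, giving $v_{i-1,c}=v_{i,c}=\frac n2$. But columns of $V$ are difference rows by Condition~3 of Lemma~\ref{H V conditions}, and two consecutive $\frac n2$'s is precisely the forbidden pattern $(0,0)$ of Rule~1 in Lemma~\ref{disallowed patterns} --- contradiction. So Row A never sits in an interior row.

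This leaves the case of Row A in row $1$ (equivalently row $n$), which I expect to be the genuine obstacle. Here only $d_2$ is available, and by the first step it is a type 2 row. I would propagate downward: Lemma~\ref{unequal neighbors lemma} gives $d_3\ne d_1,d_2$; $d_3$ is a neighbor of the type 2 row $d_2$; a row with $h=0$ is a rotation of Row C and so cannot neighbor a type 2 (part (2) of Lemma~\ref{type 2 neighbors, Rows A and C} and its analogue for rotations of Row C), a type 1 other than Row A would make $L$ a row product by Theorem~\ref{row prod with type 1}, and $d_3=$~Row A would place Row A in the interior row $3$, already excluded --- so $d_3$ is again a type 2, and inductively $d_2,\dots,d_n$ are all type 2 with consecutive ones differing only a little ($|m-m'|\le 1$ by part (3) of Lemma~\ref{type 2 neighbors, Rows A and C}, $|h-h'|\le 2$ by Lemma~\ref{adj sum lemma}). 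Then every row of $V$ is constant off a bounded set of columns, and the endgame is to select a column $c$ missing the exceptional set of every row simultaneously, so that column $c$ of $V$ is the bare ``spine'' $(v_{1,1},\dots,v_{n-1,1})$ --- a genuine difference row --- and then push the sign constraints of Lemma~\ref{disallowed patterns} against the end-column values computed in the second step to force a forbidden pattern. Controlling how the exceptional columns of successive rows of $V$ drift (they follow the moving lead-$1$'s boundary and the tail of the type 2 rows), so that a single $c$ works for all of them, is the one step where I would expect to need a fresh sub-argument rather than a citation, and it is where I would concentrate the effort.
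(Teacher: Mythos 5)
Your first step (forcing $d_{i+1}$ to be one of the three type~2 neighbors of Row A) matches the paper, and your handling of an \emph{interior} occurrence of Row A is a correct and genuinely different argument: bracketing row $i$ by two type~2 neighbors forces rows $i-1$ and $i$ of $V$ to both equal $\frac{n}{2}$ on the middle columns $[4,n-3]$, giving the forbidden vertical $(0,0)$ of Rule~1. That is cleaner than the paper's downward propagation, though it needs $n\geq 8$ for the middle block to be nonempty (for $n=6$ the two exceptional sets cover everything and you would have to check their overlap by hand, and the ``type 2 neighbors'' there are in fact cycles, so the case analysis changes).

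The genuine gap is the boundary case, Row A in row $1$ or row $n$, which you leave as a sketch --- and this is not a residual corner case but the entire content of the lemma as the paper proves it. The paper's argument is local, using only rows $i$, $i+1$, $i+2$: once $d_i=$ Row A and $d_{i+1}$ is a type~2 neighbor, the middle $n-6$ entries of row $i$ of $V$ equal $\frac{n}{2}$, so the entries of row $i+1$ of $V$ directly below them are nonzero and, by the column version of Lemma~\ref{1s above -1s}, all equal; this forces $d_{i+2}$ to agree with $d_{i+1}$ on the middle columns, and then the constraint $|m-m'|\leq 1$ from part~3 of Lemma~\ref{type 2 neighbors, Rows A and C} eliminates every remaining candidate for $d_{i+2}$. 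Your proposed endgame --- propagate type~2's down the whole square and choose a single column $c$ avoiding the exceptional set of every row of $V$ --- is exactly the step you flag as unfinished, and I do not believe it can be repaired in that form: the exceptional columns of successive $V$-rows follow the block of lead $1$'s, which (as the circulant analysis in Theorem~\ref{row prod with type 2} shows) can shift by one column per row and sweep the entire width of the square, so no single column need avoid them all. You need a three-row local contradiction of the kind above rather than a global column-selection argument; without it the lemma is not proved.
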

\begin{proof}
The proof is showcased for $n = 12$, which is small enough to fit on paper and large enough to show the pattern that easily generalizes for $n > 12$. We show if $d_{i} =$Row A, then $d_{i+1}= d_i$.

Suppose $d_i =$Row A. Restrict $H$ to just rows $i, i+1$, and $i+2$, and $V$ to rows $i$ and $i+1$. By Lemma \ref{type 2 neighbors, Rows A and C}, the neighbors of $d_i$ are itself, a type 1, and 3 type 2's. We are assuming $d_{i+1}\neq d_i$, and Theorem \ref{row prod with type 1} says $d_2$ is not type 1.

The remaining three type 2 rows look exactly like Row A everywhere, except the first and last 3 entries may differ. The example for $n =12$ so far is shown below:
\begin{figure}[H]
\centering
\begin{tikzpicture}[scale=.6]
    \node at(5.5, 3.5) {$H$};
    \draw(0,0)grid(11,3); 
    \draw[ultra thick](0, 1)rectangle(3, 2);
    \draw[ultra thick](8, 1)rectangle(11, 2);\foreach\x[count=\i] in{$i$, $i+1$, $i+2$}{\node at(-0.6, 3.5-\i){$\x$};};
    \foreach\x[count=\i] in{0, 1, 0, 1, 0, 1, 0, 1, 0, 1, 0}{\node at(\i-0.5,2.5){$\x$};};
    \foreach\x[count=\i] in{*, *, *, 1, 0, 1, 0, 1, *, *, *}{\node at(\i-0.5,1.5){$\x$};};
    \foreach\x[count=\i] in{*, *, *, *, *, *, *, *, *, *, *}{\node at(\i-0.5,0.5){$\x$};};
\end{tikzpicture}
\quad
\begin{tikzpicture}[scale=.6]
    \node at(6, 2.5) {$V$};
    \draw(0,0)grid(12,2);
    \foreach\x[count=\i] in{$i$, $i+1$}{\node at(12.6, 2.5-\i){$\x$};};
    \foreach\x[count=\i] in{\cdot, \cdot, \cdot, 0, 0, 0, 0, 0, 0, \cdot, \cdot, \cdot,}{\node at(\i-0.5,1.5){$\x$};};
    \foreach\x[count=\i] in{\cdot, \cdot, \cdot, \pm1, \pm1, \pm1, \pm1, \pm1, \pm1, \cdot, \cdot, \cdot,}{\node at(\i-0.5,0.5){$\x$};};
\end{tikzpicture}
    \caption{The first 2 rows of $H$ (left) induce the following entries in $V$ (right).}
\end{figure}
Either the left bolded $(*, *, *)$ equals $(-1, 0, 1)$ or the right bolded $(*, *, *)$ equals $(1, 0, -1)$, or both, depending on which type 2 neighbor we choose for $d_{i+1}$, otherwise the bolded entries are equal those above. Every scenario implies the center $n - 6$ entries in row $i$ of $V$ are 0. $V$ is a $2\times n$ subset of a difference matrix, so the entries directly below 0's are non-zero. Further, they are all equal since Lemma \ref{1s above -1s} translates for columns as 1's and -1's are not horizontally adjacent in $V$. 

Since the middle entries of row $i+1$ of $V$ are constant, the middle entries in $d_{i+2}$ must match those in $d_{i+1}$. We know $d_{i+2}=$ Row A contradicts the second half of Lemma \ref{adj sum lemma}, so $d_{i+2}$ is a type 2. Assume WLOG that the first bolded $(*, *, *)$ entries in $d_{i+2}$ is $(-1, 0, 1)$, for illustrative purposes:
\begin{figure}[H]
\centering
\begin{tikzpicture}[scale=.6]
    \node at(5.5, 3.5) {$H$};
    \draw(0,0)grid(11,3); 
    \draw[ultra thick](0, 0)rectangle(3, 1);
    \foreach\x[count=\i] in{$i$, $i+1$, $i+2$}{\node at(-0.6, 3.5-\i){$\x$};};
    \foreach\x[count=\i] in{0, 1, 0, 1, 0, 1, 0, 1, 0, 1, 0}{\node at(\i-0.5,2.5){$\x$};};
    \foreach\x[count=\i] in{-1,0, 1, 1, 0, 1, 0, 1, *, *, *}{\node at(\i-0.5,1.5){$\x$};};
    \foreach\x[count=\i] in{*, *, *, 1, 0, 1, 0, 1, *, *, *}{\node at(\i-0.5,0.5){$\x$};};
\end{tikzpicture}
\quad
\begin{tikzpicture}[scale=.6]
    \node at(6, 2.5) {$V$};
    \draw(0,0)grid(12,2); 
    \draw[ultra thick](0, 0)rectangle(3, 1);
    \foreach\x[count=\i] in{$i$, $i+1$}{\node at(12.6, 2.5-\i){$\x$};};
    \foreach\x[count=\i] in{1, 0, -1, 0, 0, 0, 0, 0, 0, \cdot, \cdot, \cdot,}{\node at(\i-0.5,1.5){$\x$};};
    \foreach\x[count=\i] in{\cdot, \cdot, \cdot, \pm1, \pm1, \pm1, \pm1, \pm1, \pm1, \cdot, \cdot, \cdot,}{\node at(\i-0.5,0.5){$\x$};};
\end{tikzpicture}
    \caption{First 3 rows of $H$, under the assumption that row 2 begins in (-1, 0, 1).}
\end{figure}
Two cases follow from Lemma \ref{type 2 neighbors, Rows A and C} Statement 3: (1) if the bolded $(*, *, *)$ is $(-1, 0, 1)$, then the bolded $(\cdot, \cdot, \cdot)$ must be $(\pm 1, \pm 1, \pm 1)$, which places a 1 below a -1 or a -1 below a 1. (2) If the bolded $(*, *, *)$ is $(0, 1, 0)$, then the bolded $(\cdot, \cdot, \cdot)$ must be $(-1, 0, 1)$, which does the same. Both cases are a contradiction, and a similar argument happens if we had chosen the last 3 entries in $d_2$ to be $(1, 0, -1)$. 
\end{proof}

Finally, we can say the same with type 2's.
\begin{thm}\label{row prod with type 2}
If $L$ contains Row C, then $L$ is a circulant (or back circulant) Latin square or a row product. If $L$ contains a type 2 row that is not cyclic, then $L$ is a row product. 
\end{thm}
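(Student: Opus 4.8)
The plan is to treat the two claims in turn, each time using the dichotomy of Theorem~\ref{determined theorem}: if two consecutive rows of $L$ coincide then $L$ is a row product, so otherwise every consecutive pair is distinct and one can chase the neighbor structure. It is worth noting up front that the two claims are intertwined (a non-cyclic type 2 row can sit next to a rotation of Row C), so the cleanest organization is probably a single simultaneous induction; below I separate them for exposition.

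\emph{Claim 1 (Row C).} Suppose $d_i$ is Row C and all consecutive rows are distinct. By Lemma~\ref{type 2 neighbors, Rows A and C}(2) the only neighbors of Row C are Row C and its rotations by $\pm1$, so $d_{i-1},d_{i+1}$ are such rotations; Lemma~\ref{unequal neighbors lemma} applied to the determined pair $(d_i,d_{i+1})$ forces $d_{i-1}\neq d_{i+1}$, so $d_{i+1}$ is the rotation by $+1$ and $d_{i-1}$ the rotation by $-1$ (or vice versa, which yields a back-circulant). The engine is that, among cyclic rows, being neighbors is preserved by simultaneously rotating both rows of a pair: cyclically shifting all columns of a $2\times n$ rectangle whose two rows are both cyclic leaves the symbols in each row and column unchanged, preserves every vertical adjacency, and trades one horizontal adjacency of distance $\geq\frac{n}{2}-1$ for another of distance $\geq\frac{n}{2}-1$, so maximum inner distance persists. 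Since every cyclic row is a rotation of Row C (Theorem~\ref{classification of cycles}; for $n=4k$ also $\pm(1,\dots,1)$, but these have a $1$ wherever a rotation of Row C has a $-1$, hence are not its neighbors by Lemma~\ref{1s above -1s}), the \emph{cyclic} neighbors of the rotation by $+1$ are the rotations of Row C by $0,+1,+2$. If $d_{i+2}$ is one of these, Lemma~\ref{unequal neighbors lemma} ($d_{i+2}\neq d_i$) pins it to the rotation by $+2$, and iterating gives $d_{i+t}=$ the rotation of Row C by $t$ for all $t$; the induced vertical differences are then forced as well, so $L$ is exactly the circulant associated (Theorem~\ref{circulants equal to cycles}) to the Hamiltonian cycle Row C. The remaining possibility, that $d_{i+2}$ is a \emph{non-cyclic} neighbor of the rotation by $+1$, is absorbed into Claim 2: such a $d_{i+2}$ is either a type 1 row --- making $L$ a row product by Theorem~\ref{row prod with type 1} or Lemma~\ref{row prod with Row A} --- or a non-cyclic type 2 row, which also forces $L$ to be a row product.

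\emph{Claim 2 (a non-cyclic type 2 row).} Suppose $d_i$ is a type 2 row with $h_i\geq2$ (negating $L$ handles $h_i\leq-2$), suppose for contradiction that $L$ is not a row product, and assume no two consecutive rows coincide. I would first argue that every row of $L$ is a non-cyclic type 2, by induction from $d_i$, ruling out each alternative for a neighbor $d_{k+1}\neq d_k$: a generic type 1 is impossible since type 1 rows neighbor only type 1 rows (Lemma~\ref{type 1 neighbors}); Row A is impossible by Lemma~\ref{adj sum lemma}, as $|h_k-(1-\frac{n}{2})|>2$; Row $-$A would make $L$ a row product by (the negation-symmetric form of) Lemma~\ref{row prod with Row A}; and a cyclic neighbor would, by Claim 1, make $L$ a circulant, back-circulant, or row product, each contradicting either the hypothesis or the non-cyclicity of $d_i$. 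So every row has parameters $(m_k,h_k)$ with $2\leq h_k\leq\frac{n}{2}-2$ and $0\leq m_k\leq\frac{n}{2}-1-h_k\leq\frac{n}{2}-3$. Now I look at the first columns of $L$ and of $V$. Maximum inner distance gives $v_{k,1}-\frac{n}{2}\in\{-1,0,1\}$, and since every consecutive pair is determined (Theorem~\ref{determined theorem}) the partial sums $\sum_{l\leq t}(\epsilon_{k+1,l}-\epsilon_{k,l})$ occupy an interval of length exactly $2$, so $v_{k,1}$ is uniquely forced; reading this value off the block shapes in Theorem~\ref{classification of paths} should give $v_{k,1}\equiv\frac{n}{2}+(m_k-m_{k+1})\pmod n$, whence $\sum_{k=1}^{t}v_{k,1}\equiv t\cdot\frac{n}{2}+(m_1-m_{t+1})\pmod n$. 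But column $1$ of $L$ is a Latin column, so its $n$ partial sums ($t=0,\dots,n-1$) are distinct mod $n$; restricting to even $t$ forces $m_1,m_3,\dots,m_{n-1}$ --- a list of $\frac{n}{2}$ numbers --- to be pairwise distinct, which is impossible since they all lie in the $(\frac{n}{2}-2)$-element set $\{0,1,\dots,\frac{n}{2}-3\}$. Hence $L$ is a row product; and because circulants and back-circulants are built from cyclic rows only, a single non-cyclic type 2 row anywhere in $L$ already forces $L$ to be a row product.

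\emph{Main obstacle.} I expect the real work to be the identity $v_{k,1}\equiv\frac{n}{2}+(m_k-m_{k+1})$: establishing it means pinning down both the minimum and the maximum of $\big(\sum_{l\leq t}\epsilon_{k+1,l}-\sum_{l\leq t}\epsilon_{k,l}\big)_{t=0}^{n-1}$ from the explicit forms in Theorem~\ref{classification of paths}, for every admissible pair of neighboring type 2 parameter vectors, and checking these extrema differ by exactly $2$ and are positioned to force that value of the first vertical difference. Secondarily, care is needed to knit Claims 1 and 2 together without circularity --- for instance by running both inductions at once, or by first proving the separate lemma that a rotation of Row C has no non-cyclic type 2 neighbor inside a square of maximum inner distance --- and to check that the various ``escape'' reductions survive negating and flipping $L$ and that the list of a cyclic row's neighbors is complete.
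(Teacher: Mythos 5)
Your Claim 1 follows the paper's strategy in outline --- show that once Row C sits next to a distinct neighbor, the rotation propagates down the whole square --- though you justify the propagation by a rotation symmetry of the neighbor relation among cyclic rows plus Lemma \ref{unequal neighbors lemma}, where the paper instead computes the forced entries of $V$ directly and checks that the only legal continuation for row $i+2$ is the rotation by $-2$. The paper's version is self-contained: it never needs to know in advance that a rotation of Row C has no non-cyclic neighbor inside $L$, whereas your version defers exactly that case to Claim 2, while Claim 2 in turn invokes Claim 1 to exclude cyclic neighbors. As written the two claims each cite the other; you flag this yourself, but it needs an actual repair (for instance, prove directly, as the paper does, that in a non-row-product square the row below a rotation of Row C is again a rotation).

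For Claim 2 you take a genuinely different route. The paper tracks the number of lead and trailing 1's (equivalently $|\epsilon_{i,n}|$) down the rows, argues it must be monotone or constant, and rules out all three behaviors by exhausting the parameter range before $n$ rows are filled. You instead posit the formula $v_{k,1}\equiv \frac{n}{2}+(m_k-m_{k+1})$, telescope it down the first column, and extract a pigeonhole contradiction from the Latin-column condition --- an attractive idea which, if the formula holds, is arguably cleaner than the paper's case analysis. But that formula carries the entire weight of the claim and you do not prove it; you correctly name it as the main obstacle. Establishing it amounts to showing that for every admissible pair of neighboring non-cyclic type 2 rows the minimum of the partial sums $\sum_{l\le t}(\epsilon_{k,l}-\epsilon_{k+1,l})$ equals $m_k-m_{k+1}-1$, a nontrivial verification over all the block shapes of Theorem \ref{classification of paths} (spot checks suggest it is true, but it is not automatic, and the case where the two rows differ only in their trailing blocks needs particular care). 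Until that computation is carried out and the Claim 1/Claim 2 interdependence is untangled, the proposal is a plausible plan rather than a proof.
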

\begin{proof}
First we show that if $L$ contains Row C followed by a non-equal neighbor, then $L$ is circulant (or back circulant, and contains only cycles) or a row product. Then we show if $L$ is not a row product and contains a type 2 row $d_*$, then $L$ must contains Row C, a contradiction.

Suppose that $d_i=$Row C. Then WLOG choose its neighbor $d_{i+1}$ to be a rotation of Row C by -1. Once again we use the example $n=12$:
\begin{figure}[H]
\centering
\begin{tikzpicture}[scale=.6]
    \draw(0,0)grid(11,3); 
    \node at(5.5, 3.5) {$H$};
    \foreach\x[count=\i] in{$i$, $i+1$, $i+2$}{\node at(-0.6, 3.5-\i){$\x$};};
    \foreach\x[count=\i] in{1, 1, 1, 1, 1, 0, -1, -1, -1, -1, -1}{\node at(\i-0.5,2.5){$\x$};};
    \foreach\x[count=\i] in{1, 1, 1, 1, 0, -1, -1, -1, -1, -1, 0}{\node at(\i-0.5,1.5){$\x$};};
    \foreach\x[count=\i] in{*, *, *, *, *, *, *, *, *, *, *}{\node at(\i-0.5,0.5){$\x$};};
\end{tikzpicture}
\quad
\begin{tikzpicture}[scale=.6]
    \draw(0,0)grid(12,2); 
    \node at(5.5, 2.5) {$V$};
    \foreach\x[count=\i] in{$i$, $i+1$}{\node at(12.6, 2.5-\i){$\x$};};
    \foreach\x[count=\i] in{1, 1, 1, 1, 1, 0, -1, -1, -1, -1, -1, 0}{\node at(\i-0.5,1.5){$\x$};};
    \foreach\x[count=\i] in{\cdot, \cdot, \cdot, \cdot, \cdot, \cdot, \cdot, \cdot, \cdot, \cdot, \cdot, \cdot}{\node at(\i-0.5,0.5){$\x$};};
\end{tikzpicture}
\end{figure}
Observe that in $H$, the first $\frac{n}{2} - 1$ entries in row $i$ are 1, while in row $i+1$ the first $\frac{n}{2} - 2$ entries are 1. We show this implies row $i+2$ has $\frac{n}{2} - 3$.

Suppose $d_{i+i}$ has $m$ lead 1's, followed by 0 and $m+1$ many -1's. If $d_{i+2}$ has $m$ lead 1's as well, then the first $2m+2$ entries in $d_{i+1}$ and $d_{i+2}$ coincide, thus the first $2m+3$ entries in row $i+1$ of $V$ are constant. Since entries $m+1, m+2, m+3$ in row $i$ of $V$ are 1, 0, and -1, this is a contradiction. Therefore row $i+2$ has $m\pm 1$ lead 1's by Lemma \ref{type 2 neighbors, Rows A and C} Statement 3. Since row $i$ had $m+1$ lead ones, one can check that if row $i+2$ does as well, then entries $m+1, m+2, m+3$ of row $i+1$ of $V$ are $-1, 0, 1$, another contradiction.

The argument above holds whenever $1\leq m \leq \frac{n}{2} - 2$. Further, the same argument holds in reverse: in general as $i$ increases the number of 1's at the front or ends of rows in $H$ must stay the same, increase by 1 until reaching a maximum of $\frac{n}{2} - 1$, or decrease by 1 until reaching the minimum of 0. Returning to our example, row $i+2$ of $H$ must then be of the following form:
\begin{figure}[H]
\centering
\begin{tikzpicture}[scale=.6]
    \draw(0,0)grid(11,3); 
    \draw[ultra thick](8,0)rectangle(11,1);
    \node at(5.5, 3.5) {$H$};
    \foreach\x[count=\i] in{$i$, $i+1$, $i+2$}{\node at(-0.6, 3.5-\i){$\x$};};
    \foreach\x[count=\i] in{1, 1, 1, 1, 1, 0, -1, -1, -1, -1, -1}{\node at(\i-0.5,2.5){$\x$};};
    \foreach\x[count=\i] in{1, 1, 1, 1, 0, -1, -1, -1, -1, -1, 0}{\node at(\i-0.5,1.5){$\x$};};
    \foreach\x[count=\i] in{1, 1, 1, 0, -1, -1, -1, -1, *, *, *}{\node at(\i-0.5,0.5){$\x$};};
\end{tikzpicture}
\quad
\begin{tikzpicture}[scale=.6]
    \draw(0,0)grid(12,2); 
    \node at(5.5, 2.5) {$V$};
    \foreach\x[count=\i] in{$i$, $i+1$}{\node at(12.6, 2.5-\i){$\x$};};
    \foreach\x[count=\i] in{1, 1, 1, 1, 1, 0, -1, -1, -1, -1, -1, 0}{\node at(\i-0.5,1.5){$\x$};};
    \foreach\x[count=\i] in{1, 1, 1, 1, 0, -1, -1, -1, -1, \cdot, \cdot, \cdot}{\node at(\i-0.5,0.5){$\x$};};
\end{tikzpicture}
\end{figure}
The only possible ways to fill the bolded $(*,*,*)$ is $(0, -1, 0)$ or $(-1, 0, 1)$. But the first case places a 0 at the far right entry in row $i+1$ of $V$, a contradiction. It follows that row $i+2$ is the rotation by -2 of Row C. Now using the property above that the number of lead or end 1's must constantly increase and decrease, the arguments continues until every row of $H$ is a cycle rotated by $\pm1$ from the previous. Therefore $L$ is back-circulant if the rows rotate by $-1$, or circulant if they rotate by 1.

We have further shown that if $L$ contains a cycle, then $L$ is a circulant or back-circulant. Now we can assume $d_1$ is a type 2 non-cyclic row. As we increase $i$, the amount of 1's at the beginnings and ends of $d_i$ must constantly increase, decrease, or stay fixed. The total number of 1's in an extended difference row $d_i$ depends only on the magnitude of its final entry $\epsilon_{i,n}$, so as $i$ increases, $|\epsilon_{i,n}|$ must increase or decrease at a constant rate, or stay fixed.

1) $\epsilon_{i,n}$ values are capped by $|\epsilon_{i,n}| \leq\frac{n}{2} -2$, so if $|\epsilon_{i,n}|$ increases with $i$, we can fill at most $\frac{n}{2} - 2$ rows before reaching the maximum. The only rows with larger $\epsilon_{i,n}$ value are type 1.

2) $|\epsilon_{i,n}|=0$ or $|\epsilon_{i,n}|=1$ correspond to cycles, while Lemma \ref{adj sum lemma} says $|\epsilon_{i,n} - \epsilon_{i+1, n}|\leq 2$. Therefore starting at $\epsilon_{1,n} \geq 2$ and decreasing, $L$ must contain a cycle. 

3) $|\epsilon_{i,n}|$ cannot stay fixed indefinitely, as for any $\epsilon_{i,n}$ there are exactly $\frac{n}{2} - \epsilon_{i,n}$ extended difference rows ending with $\epsilon_{i,n}$.

None of the three cases fill out an entire Latin square $L$. Therefore, if $L$ contains a non-cyclic type 2 row, then $L$ is a row product
\end{proof}

\begin{thm}\label{main result}
Every Latin square of even order $n\geq 6$ with maximum inner distance is a circulant or back-circulant Latin square, or is a product of rows of maximum inner distance. The number of Latin squares of maximum inner distance is given by $n(P(n)^2 + 2n)$, where:
\[P(n) = 
\begin{cases}
    \frac{1}{4}n^2 + 2 & \text{if $n = 4k$} \\
    \frac{1}{4}n^2 + 1 & \text{if $n = 4k + 2$}
\end{cases}\]
is the number of rows of maximum inner distance beginning in the symbol 1.
\end{thm}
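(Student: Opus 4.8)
The plan is to prove the two assertions separately: first the structural statement that every $L\in\MiDn$ is a circulant, a back-circulant, or a row product, and then the enumeration, which will follow from the structure by a short inclusion--exclusion over facts already established. For the structure I would start by normalizing: by Corollary \ref{choice of symbol} assume $m_{1,1}=1$, so row $1$ is a normal row with a well-defined extended difference row $d_1$, and by distance conjugation (which sends $\ell(s,H,V)$ to $\ell(s,-H,-V)$, hence negates every row of $H$) I may replace $d_1$ by $-d_1$ whenever convenient, so WLOG the last entry $h$ of $d_1$ satisfies $h\geq 0$ and $d_1$ is not ``Row $-$A''. Now apply the Classification Theorem \ref{classification of paths}: $d_1$ is type $0$ (a cycle with $h=0$), type $1$, or type $2$ (the case that also contains the cyclic rows with $h=\pm 1$). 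If $d_1$ is type $1$ with $d_1\neq$ Row A, then Theorem \ref{row prod with type 1} gives that $L$ is a row product; if $d_1=$ Row A, then Lemma \ref{row prod with Row A} gives that $L$ is a row product; if $d_1$ is a non-cyclic type $2$ row, then Theorem \ref{row prod with type 2} gives that $L$ is a row product; and if $d_1$ is cyclic (type $0$, or a cyclic type $2$), then Theorem \ref{row prod with type 2} gives that $L$ is a circulant, a back-circulant, or a row product. These cases are exhaustive, so the structural claim follows.

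The one delicate point in the structural step is that Theorem \ref{row prod with type 2} is phrased for Row C, whereas a cyclic $d_1$ could a priori be a rotation or negation of Row C, or (for $n=4k$) the exceptional $\pm(1,\dots,1)$. Here I would note that the reduction used in that theorem relies only on the neighbour structure from Lemma \ref{type 2 neighbors, Rows A and C} together with Theorem \ref{determined theorem}, both of which are equivariant under rotation of the extended difference row and under negation; so the argument that a non--row-product $L$ containing a cyclic row must actually contain Row C, and hence be a (back-)circulant, applies verbatim to these rows as well. Likewise the Row~$\pm$A ambiguity is absorbed by the distance-conjugation normalization above. With that bookkeeping in place the structural claim is complete.

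For the enumeration I would pass to addition classes. By Theorem \ref{equivalence by adding} addition partitions $\MiDn$ into classes of size $n$, and each class contains exactly one square with $m_{1,1}=1$, so $|\MiDn|=n\cdot N$ where $N$ counts the $L\in\MiDn$ with $m_{1,1}=1$. Let $\mathcal R,\mathcal C,\mathcal B$ be the subsets of such squares that are row products, circulants, and back-circulants respectively; by the structural claim $N=|\mathcal R\cup\mathcal C\cup\mathcal B|$. By Lemma \ref{row prod equivalent} a row product with $m_{1,1}=1$ is $\rowprod(d,d')$ with $d$ the difference row of its first row and $d'$ that of its first column, its inner distance equals the smaller of the two rows' inner distances, hence it lies in $\MiDn$ iff both $d$ and $d'$ have maximum inner distance; since $(d,d')$ is recoverable from the square and there are $P(n)$ such difference rows by Theorem \ref{number of paths}, we get $|\mathcal R|=P(n)^2$. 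A circulant with $m_{1,1}=1$ is determined by its first row, which by Theorem \ref{circulants equal to cycles} must be a normal Hamiltonian cycle on the maximum distance graph, and by Theorem \ref{classification of cycles} there are $n+2$ of these if $n=4k$ and $n$ if $n=4k+2$; the same count holds for $\mathcal B$. A square that is both circulant and back-circulant would have its first row fixed by a cyclic shift of $2$, forcing at most two distinct symbols, impossible for $n\geq 6$, so $\mathcal C\cap\mathcal B=\emptyset$ and a fortiori $\mathcal R\cap\mathcal C\cap\mathcal B=\emptyset$. Finally $|\mathcal R\cap\mathcal C|=|\mathcal R\cap\mathcal B|$ equals $2$ if $n=4k$ and $0$ if $n=4k+2$ by Corollary \ref{MiD circulants cap products}. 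Inclusion--exclusion then gives $N=P(n)^2+|\mathcal C|+|\mathcal B|-|\mathcal R\cap\mathcal C|-|\mathcal R\cap\mathcal B|$, and in the case $n=4k$ this is $P(n)^2+(n+2)+(n+2)-2-2=P(n)^2+2n$, while in the case $n=4k+2$ it is $P(n)^2+n+n-0-0=P(n)^2+2n$; hence $N=P(n)^2+2n$ uniformly and $|\MiDn|=n(P(n)^2+2n)$ with $P(n)$ as in Theorem \ref{number of paths}.

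The main obstacle is the structural claim, and specifically the case bookkeeping: establishing that every possibility for Row $1$'s extended difference row lands in the scope of one of Theorems \ref{row prod with type 1}, \ref{row prod with type 2}, \ref{determined theorem} or Lemma \ref{row prod with Row A}, and that the cyclic rows other than Row C genuinely force a (back-)circulant rather than escaping the argument. Those four results are the substantive inputs; granting them, the enumeration is only the short inclusion--exclusion above, built entirely on corollaries already proved.
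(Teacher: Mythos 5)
Your proposal is correct and follows essentially the same route as the paper: classify row $1$'s extended difference row via Theorem \ref{classification of paths} and funnel each case through Theorems \ref{row prod with type 1}, \ref{row prod with type 2} and Lemma \ref{row prod with Row A} for the structure, then count row products, circulants and back-circulants with $m_{1,1}=1$ and multiply by $n$ for the addition classes. If anything, your write-up is more complete than the paper's own proof, which omits the final inclusion--exclusion (your $P(n)^2+2(n+2)-4$ versus $P(n)^2+2n$ reconciliation via Corollary \ref{MiD circulants cap products}) and the equivariance remark needed to apply the Row C argument to its rotations and negations.
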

\begin{proof}
Theorems \ref{classification of paths} establishes that every row of maximum inner distance can be classified as a type 1 row, type 2 row, or is the cyclic row known as Row C. Some type 2 rows are \textit{cyclic rows}. Theorem \ref{number of paths} counts this number as $P(n)$ given in the theorem statement above. Lemma \ref{determined theorem} showed that any pair of non-equal neighboring rows is \textit{determined}, which allowed lemmas \ref{row prod with type 1} and \ref{row prod with Row A} to show that if a Latin square of maximum inner distance $L$ contained a type 1, then $L$ is a row product. Theorem \ref{row prod with type 2} did the same for non-cyclic type 2's, and further if $L$ contains a cyclic row, it is a row product or a circulant or back-circulant.
\end{proof}

\section{Acknowledgements and Conclusion}
This project was done as part of an honors thesis at the University of Texas at Austin. The combinatorial topic of Latin squares has been of interest to me since the Summer of 2020, where I met my first advisor James Hammer and as a group developed the concept of inner distance\cite{inner distance}. Despite the results in this paper being mostly elementary computations, the results quickly increase in complexity as inner distance decreases with large values of $n$, making it a surprise that it is possible to compute in general for the maximum inner distance. My future hopes are to find some correspondence of inner distance in Latin squares with another area of mathematics, for example how the multiplication tables of groups or quasigroups are Latin squares, as well as to expand the classification of large inner distance squares to other types of squares such as Sudoku Latin squares or to odd ordered squares of inner distance $\frac{n-3}{2}$. Lastly, I want to deeply thank my advisors James Hammer and Lorenzo Sadun, for supporting me in this project.

\end{document}